\newcommand{\sh}{\mathscr{S}}
\def\D{\{0,1\}}
\newcommand{\llex}{\le_{\rm lex}}
\newcommand{\sllex}{<_{\rm lex}}
\newcommand{\lex}{{\rm lex}}
\def\A{\mathbb{A}}
\def\B{\mathbb{B}}
\newcommand{\nats}{{\mathbb N}}
\newcommand{\reals}{{\mathbb R}}
\newcommand{\rats}{{\mathbb Q}}
\newcommand{\pref}{{\rm pref}}
\newtheorem{thm}{Theorem}
\newtheorem{ques}[thm]{Question}
\newtheorem{theorem}{Theorem}[section]
\newtheorem{lemma}[theorem]{Lemma}
\newtheorem{corollary}[theorem]{Corollary}
\newtheorem{proposition}[theorem]{Proposition}
\newtheorem{question}[theorem]{Question}
\theoremstyle{definition}
\newtheorem{remark}[theorem]{Remark}
\newtheorem{definition}[theorem]{Definition}
\begin{document}

\begin{frontmatter}

\title{Infinite Self-Shuffling Words}

\author[label3]{\'Emilie Charlier}
  \ead{echarlier@ulg.ac.be}

  \author[label4]{Teturo Kamae}
  \ead{kamae@apost.plala.or.jp}

 \author[label5,label6]{Svetlana Puzynina\fnref{label1}}
  \ead{svepuz@utu.fi}

   \author[label5,label7]{Luca Q. Zamboni\fnref{label2}}
  \ead{lupastis@gmail.com}

  \fntext[label1]{Partially supported by the Academy of Finland under grant 251371,
by Russian Foundation of Basic Research (grants 12-01-00448 and
12-01-00089).}
  \fntext[label2]{Partially supported by a FiDiPro grant (137991) from the Academy of Finland and by
ANR grant {\sl SUBTILE}.}
\address[label3]{D\'epartement de Math\'ematique, Universit\'e de Li\`ege, Belgium}
\address[label4]{Advanced Mathematical Institute, Osaka City University,  Japan}
\address[label5]{FUNDIM, University of Turku, Finland}
\address[label6]{Sobolev Institute of Mathematics, Novosibirsk Russia}
\address[label7]{Universit\'e de Lyon,
Universit\'e Lyon 1, CNRS UMR 5208,
Institut Camille Jordan,
43 boulevard du 11 novembre 1918,
F69622 Villeurbanne Cedex, France}

\begin{abstract}
In this paper we introduce and study a new property of infinite words:
An infinite word $x\in \A^\nats$, with values in a finite set $\A$,
is said to be $k$-{\it self-shuffling} $(k\geq 2)$ if $x$ admits factorizations:
$x=\prod_{i=0}^\infty U_i^{(1)}\cdots U_i^{(k)}=\prod_{i=0}^\infty U_i^{(1)}=\cdots =\prod_{i=0}^\infty U_i^{(k)}$.
In other words, there exists a shuffle of $k$-copies of $x$ which produces $x$.
We are particularly interested in the case $k=2$, in which case we say $x$ is self-shuffling.
This property of infinite words is shown to be independent of the complexity of the word
as measured by the number of distinct factors of each length.
Examples exist from bounded to full complexity.
It is also an intrinsic property of the word and not of its language (set of factors).
For instance, 
every aperiodic word contains a non self-shuffling word in its
shift orbit closure. While the property of being self-shuffling is
a relatively strong condition, many important words arising in the
area of symbolic dynamics are verified to be self-shuffling. They
include for instance the Thue-Morse word fixed by the morphism
$0\mapsto 01$, $1\mapsto 10$.
As another example we show that all
Sturmian words of slope $\alpha \in \reals \setminus \rats$ and
intercept $0<\rho <1$ are self-shuffling (while those of intercept
$\rho=0$ are not). Our characterization of self-shuffling Sturmian
words can be interpreted arithmetically in terms of a dynamical
embedding and defines an arithmetic process we call the {\it
stepping stone model}. One important feature of self-shuffling
words stems from its morphic invariance: The morphic image of a
self-shuffling word is  self-shuffling. This provides a useful
tool for showing that one word is not the morphic image of
another. In addition to its morphic invariance, this new notion
has other unexpected applications particularly in the area of
substitutive dynamical systems. For example, as a consequence of
our characterization of self-shuffling Sturmian words, we recover
a number theoretic result, originally due to Yasutomi, on a
classification of pure morphic Sturmian words in the orbit of the
characteristic.
\end{abstract}

\begin{keyword}
Word shuffling, Sturmian words, Lyndon words, morphic words, Thue-Morse word.
\MSC 68R15
\end{keyword}

\end{frontmatter}

\section{Introduction}

Let $\A$ be a finite non-empty set. We denote by $\A^*$ (resp. $\A^\nats)$ the set of
all finite (resp. infinite) words $u=x_0x_1x_2\cdots $ with $x_i\in \A$.

Given $k$ finite words $x^{(1)},x^{(2)}, \ldots ,x^{(k)} \in \A^*$
we let  $\sh (x^{(1)},x^{(2)}, \ldots ,x^{(k)})\subseteq \A^*$ denote the collection of all words $z$
for which there exists a factorization
\[
z=\prod_{i=0}^nU_i^{(1)}U_i^{(2)}\cdots U_i^{(k)}
\]
with each $U_i^{(j)} \in \A^*$ and with
$x^{(j)}=\prod_{i=0}^nU_i^{(j)}$ for each $1\leq j\leq k$.
Intuitively, $z$ may be obtained as a {\it shuffle} of the words $x^{(1)},x^{(2)}, \ldots ,x^{(k)}$.
For instance, it is readily checked that $011100110 \in \sh (0010,101,11)$.
Analogously, given $k$ infinite words $x^{(1)},x^{(2)}, \ldots ,x^{(k)} \in \A^\nats$
we define $\sh (x^{(1)},x^{(2)}, \ldots ,x^{(k)})\subseteq \A^\nats$ to be the collection of all infinite words $z$
for which there exists a factorization
\[
z=\prod_{i=0}^\infty U_i^{(1)}U_i^{(2)}\cdots U_i^{(k)}
\]
with each $U_i^{(j)} \in \A^*$ and with
$x^{(j)}=\prod_{i=0}^\infty U_i^{(j)}$ for each $1\leq j\leq k$.

Finite word shuffles were extensively studied in \cite{HRS}. Given
$x\in \A^*$, it is generally a difficult problem to determine
whether there exists $y\in \A^*$ such that $x\in \sh (y,y)$ (see
Open Problem 4 in \cite{HRS}). The problem has recently been shown
to be NP-complete for sufficiently large alphabets \cite{BS,RV}.
However, in the context of infinite words, this question is
essentially trivial: In fact, it is readily verified that if $x
\in \A^\nats$ and each symbol $a\in \A$ occurring in $x$ occurs an
infinite number of times in $x$, then there exists at least one
(and typically infinitely many) $y\in \A^\nats$ with $x\in \sh
(y,y)$. Instead, in the framework of infinite words, a far more
delicate question is the following:

\begin{ques}
Given $x\in \A^\nats$, does there exist an integer $k\geq 2$ such that $x \in \sh (\underbrace{x,x,\ldots ,x}_{k})$?
\end{ques}

\noindent If such a $k$ exists, we say $x$ is $k$-{\it self-shuffling}.
In case $k=2$, we say  $x$ is {\it self-shuffling}. It is not difficult to see
that every self-shuffling word  is $k$-self-shuffling for each $k\geq 2$.

If $x\in \A^\nats$ is $k$-self-shuffling, then there exists at
least one word $s\in \{1,2,\ldots ,k\}^{\nats}$ (called the {\it
steering word}) which defines the shuffle. Typically a
$k$-self-shuffling word $x$ can be shuffled in more than one way
so as to reproduce itself, i.e., may define more than one steering
word. In contrast, every word $s\in \{1,2,\ldots ,k\}^{\nats}$ is
a steering word for some $k$-self-shuffling word.
Moreover, if $s$ begins in a block of the form $a^rb$ with $a$ and $b$ distinct symbols in $\{1,2,\ldots ,k\}$,
then $s$ is the steering word of a unique (up to word isomorphism) non-constant $k$-self-shuffling word $x(s)$
on an alphabet of size $r$. 
In general, the relationship between properties of $s$ and $x(s)$ is a mystery.
For instance, it may be that $s$ is uniformly recurrent and $x(s)$
not. An infinite word $w$ is \emph{uniformly recurrent}, if for
each its factor $u$ there exists an integer $n$ such that each
factor of $w$ of length $n$ contains $u$ as its factor.
In this paper, 
we are primarily interested in $k$-self-shuffling words,
and less importance is placed on the corresponding steering words.
In fact, we mainly focus on self-shuffling words,
although many of the results presented here extend to general $k$.
Thus $x\in \A^\nats$ is self-shuffling if and only if $x$ admits factorizations
\[
x=\prod_{i=1}^\infty U_iV_i=\prod_{i=1}^\infty U_i=\prod_{i=1}^\infty V_i
\]
with $U_i,V_i \in \A^+$.

The simplest class of self-shuffling words consists of all
(purely) periodic words $x=u^\omega$. We note that if $x\in
\A^\nats$ is self-shuffling, then every letter $a\in \A$ occurring
in $x$ must occur an infinite number of times. Thus for instance,
the ultimately periodic word $01^\omega$ is not self-shuffling. On
the opposite extreme, we show the existence of self-shuffling
words having full complexity (i.e., they have all finite words
over a given alphabet as their factors). Thus, the property of
being self-shuffling is largely independent of the usual (subword)
"complexity" of an infinite word as measured by the number of
blocks of each given length. Moreover it is also an intrinsic
feature of the word and not of its language (or set of factors).
For instance, the {\it Fibonacci word}
\[
x=0100101001001010010100\cdots,
\]
defined as the fixed point of  the substitution $0\mapsto 01$,
$1\mapsto 0$, is self-shuffling while $0x$ and $1x$ are not (see
\S2). More generally, we will show that given any aperiodic word
$x\in \A^\nats$, the shift orbit closure of $x$ always contains at
least one point which is not self-shuffling.

While the property of being self-shuffling is quite strong,
many important words arising in symbolic dynamics turn out to be self-shuffling.
This includes  the famous {\it Thue-Morse} word
\[
\mathbf{T} =0110100110010110100101100110100110010110\cdots
\]
whose origins go back to the beginning of the last century
with the works of  the Norwegian mathematician Axel Thue \cite{Th1}.
The $n$th entry $t_n$ of $\mathbf{T}$ is defined as the sum modulo $2$ of the digits in the binary expansion of $n$.
The Thue-Morse word is linked to many different areas of mathematics:
from discrete mathematics to number theory to differential geometry (see for example \cite{AS1,AS2}).
While much is already known on the combinatorial properties of the Thue-Morse word,
proving that $\mathbf{T}$ is self-shuffling is less straightforward than expected.

Sturmian words constitute another important class of aperiodic self-shuffling words.
Sturmian words are infinite words having exactly $n+1$ factors of length
$n$ for each $n \geq 1$.  Their origins can be traced back to the astronomer J. Bernoulli  III in 1772.
Sturmian words arise naturally in different areas of mathematics including
combinatorics, algebra, number theory, ergodic theory, dynamical systems and differential equations.
They are also of great importance in theoretical physics (as basic  examples of $1$-dimensional quasicrystals)
and in theoretical computer science where they are used in
computer graphics as digital approximation of straight lines.
Sturmian words are regarded as the most basic  non-ultimately periodic infinite words.
Perhaps the most famous and well studied  Sturmian word is the Fibonacci word defined above.
In the 1940's,  Hedlund and Morse \cite{MoHe2} showed that each Sturmian word is the symbolic
coding of the orbit of a point $x$ (called the {\it intercept}) on the unit circle under a rotation
by an irrational angle $\alpha$ (called the {\it slope}), where the circle is  partitioned
into two complementary intervals, one of length $\alpha$ and the other of length $1-\alpha$.
Conversely each such coding defines a Sturmian word.
It is well known that the dynamical/ergodic properties of the system,
as well as the combinatorial properties of the associated Sturmian word,
hinge on the arithmetical/Diophantine qualities of the angle $\alpha$ given by its continued fraction expansion.
As in the case of the Fibonacci word, we show that for every irrational number $\alpha$,
all (uncountably many) Sturmian words of slope $\alpha$  and intercept $\rho$ are self-shuffling
except for the two Sturmian words corresponding to $\rho=0$.

In this paper, we derive a number of necessary (and in some cases
sufficient) conditions for a word to be self-shuffling. For
instance, if a word $x$ is self-shuffling, then $x$ begins in only
finitely many Abelian unbordered words. As an application of this
we show that the well-known {\it paper-folding word} is not
self-shuffling. Infinite {\it Lyndon words} \cite{SI} constitute
another class of words which are shown not to be self-shuffling. A
word $x\in \A^\nats$ is said to be Lyndon if there exists an order
on $\A$ with respect to which $x$ is lexicographically smaller
than each of its tails.
 We prove that if $x$ is Lyndon, then any $z \in \sh (x,x)$ is lexicographically smaller than $x$,
from which it follows immediately that $x$ is not self-shuffling.
While this may appear rather intuitive, our proof of this fact is
both long and delicate. We further prove that each aperiodic word
$x$ admits a Lyndon word in its shift orbit closure, i.e., there
exists a Lyndon word $y$ each of whose factors is a factor of $x$.
From this it follows that each aperiodic word $x$ admits a
non-self-shuffling word in its shift orbit closure.

An important feature of the self-shuffling property stems from its
invariance under the action of a morphism: The morphic image of a
self-shuffling word is again self-shuffling. Many important
classes of words (e.g., Sturmian words, pure morphic words, and
Toeplitz words) are not preserved by the action of an arbitrary
morphism. This invariance provides a useful tool for showing that
one word is not the morphic image of another. For instance, the
paper-folding word is not the morphic image of any self-shuffling
word. However this application requires knowing a priori whether a
given word is or is not self-shuffling. In general, to show that a
word is self-shuffling, one must actually exhibit a shuffle.
Self-shuffling words have other unexpected applications
particularly in the study of substitutive dynamical systems. For
instance, as an almost immediate consequence of our
characterization of self-shuffling Sturmian words, we recover a
result, originally proved by Yasutomi via number theoretic
methods, which gives a characterization of pure morphic Sturmian
words in the orbit of the characteristic.

The paper is organized as follows:
In \S2 we establish some general properties of $k$-self-shuffling words
and  along the way give various examples and non-examples.
Here we also consider self-shuffling words which are fixed points of primitive substitutions.
Under some additional assumptions on the substitution,
we deduce the self shuffling property for other points in the shift orbit of $x$.
For instance, if $\tau$ is a primitive substitution having a unique periodic point $x$,
then if $x$ is self-shuffling then the same is true of each shift of $x$.
In \S3 we establish the self-shuffling of the Thue-Morse word by explicitly constructing a shuffle.
Our proof makes use of different morphisms associated with the Thue-Morse word.
In \S4 we prove that Lyndon words are not self-shuffling.
In \S5 we obtain a  characterization of self-shuffling Sturmian words
and derive various applications including Yasutomi's result mentioned above.
In \S6 we give an arithmetic interpretation of our characterization
of self-shuffling Sturmian words in terms of a dynamical embedding
of an infinite graph into the dynamical system corresponding to a circle rotation.
In this framework we describe an arithmetic process we call the {\it stepping stone model}
which may be of independent interest in the theory of Diophantine approximations.
We end the paper with a few open questions.

A preliminary and incomplete version of this paper has been
reported at ICALP 2103 conference \cite{icalp}.

\section{General properties}\label{section_gen}

In this section we develop some basic properties of
$k$-self-shuffling words. Let $\A$ be a finite non-empty set. We
denote by $\A^*$ the set of all finite words $u=x_1x_2\cdots x_n$
with $x_i\in \A$. The quantity $n$ is called the length of $u$ and
is denoted $|u|$. For a letter $a\in \A$, let $|u|_a$  denote the
number of occurrences of $a$ in $u$. The empty word, denoted
$\varepsilon$, is the unique element in $\A^*$ with
$|\varepsilon|=0$. We set $\A^+=\A^*-\{\varepsilon\}$. We denote
by $\A^\nats$ the set of all one-sided infinite words
$x=x_0x_1x_2\cdots$ with $x_i\in \A$.  The \emph{shift} map is the
mapping defined by $x_0x_1x_2\cdots\mapsto x_1x_2x_3\cdots$. The
\emph{shift orbit closure} of an infinite word $x$ could be
defined, e.g., as the set of infinite words whose sets of factors
are included in the set of factors of $x$. Given
$x=x_0x_1x_2\cdots \in \A^\nats$ and a finite or infinite subset
$N=\{N_0<N_1<N_2<\cdots\}\subseteq\nats$, we put
$x[N]=x_{N_0}x_{N_1}x_{N_2}\cdots \in \A^\nats$.

\begin{definition}\label{k-self-shuffling}
Let $x\in \A^\nats$ and $k\in\{2,3,\ldots\}$. We say  $x$ is $k$-{\it self-shuffling}
if $x$ satisfies any one of the following two equivalent conditions:
\begin{itemize}
\item  $x \in \sh (\underbrace{x,x,\ldots ,x}_{k})$.
\item There exists a $k$-element partition of $\nats$ into infinite subsets $N^1,N^2,\ldots, N^k$ with
 $x[N^i]=x$ for each $i=1,\ldots,k$.
  \end{itemize}
\end{definition}

\noindent  In case  $x$ is $2$-self-shuffling we say simply that $x$ is {\it self-shuffling}.
It is evident that if $x$ is self-shuffling, then $x$ is $k$-self-shuffling for each $k\geq 2$.
Later we give an example of a $3$-self-shuffling word which is not self-shuffling.

If $x$ is $k$-self-shuffling, then there exists a word $s$ on a $k$-letter alphabet which defines or steers the shuffle.
We call such a word a {\it steering word} for the shuffle. More precisely, if $x\in \A^\nats$ is $k$-self-shuffling,
then there exists a $k$-element partition of $\nats$ into infinite subsets $N^1,N^2,\ldots, N^k$ with
 $x[N^i]=x$ for each $i=1,\ldots,k$.
The corresponding steering word $s=s_0s_1s_2\cdots \in
\{1,2,\ldots ,k\}^{\nats}$ is then defined by $s_n=j
\Leftrightarrow n\in N^j$. In general, a $k$-self-shuffling word
defines many different steering words, i.e., the shuffle is not
unique.
We begin with a few simple examples:\\

\noindent{\bf Fibonacci word:}
The Fibonacci infinite word
\[
x=0100101001001010010100\cdots
\]
is defined as the fixed point of  the substitution $\varphi$ given
by $0\mapsto 01$, $1\mapsto 0$. It is readily verified that
$\varphi^2(a)=\varphi(a)a$ for each $a\in \{0,1\}$. Whence,
writing $x=x_0x_1x_2\cdots $ with each $x_i\in \{0,1\}$ we obtain
\begin{align*}
x= x_0x_1x_2\cdots &=\varphi(x_0)\varphi(x_1)\varphi(x_2)\cdots
=\varphi^2(x_0)\varphi^2(x_1)\varphi^2(x_2)\cdots = \\
&\varphi(x_0) x_0\varphi(x_1) x_1\varphi(x_2) x_2\cdots
\end{align*}
which shows that $x$ is self-shuffling.
It is readily verified that the corresponding steering word
$s= 00101001001010010100\cdots$ is equal to the second shift of $x$. \\

\noindent {\bf Period doubling word:} The period-doubling word
\[
x=01000101010001000100010101\cdots
\]
is defined as the fixed point of the substitution $\sigma$ given by $0\mapsto 01$, $1\mapsto 00$.
The period doubling word is also an example of a Toeplitz word (see \cite{CaK}).
It is readily verified that $x$ admits factorizations
$x=\prod_{i=0}^\infty U_iV_i
=\prod_{i=0}^\infty U_i
=\prod_{i=0}^\infty V_i$
with $U_0=0100$, $V_0=01$, and $U_i=\sigma^{i+1}(1)$, $V_i=\sigma^{i}(1)$ for $i\geq1$.
(It suffices to show that each of the above products is fixed by $\sigma)$.
Thus the period-doubling word is self-shuffling.\\


The self-shuffling property appears largely independent of the usual subword complexity of an infinite word
as measured by the number of factors of each given length.
In fact, on one extreme are the purely periodic infinite words all of which are easily seen to be self-shuffling.
The following example illustrates the existence of a self-shuffling word having full complexity:\\

\noindent{\bf A recurrent binary self-shuffling word with full complexity:}
For each positive integer $n$, let $z_n$ denote the concatenation of all words of length $n$
in increasing lexicographic order.
For example, $z_2=00 01 10 11$.
For $i\geq 0$ put
\[
v_i=
\begin{cases}
z_n,\mbox{ if } i=n2^{n-1} \mbox{ for some } n, \\
0^i1^i, \mbox{ otherwise},
\end{cases}
\]
and define
\begin{equation*}
x=\prod_{i=0}^\infty X_i=010100110^3011^30^4010^21^2011^4\cdots,
\label{full}
\end{equation*}
where $X_0=X_1=01$, $X_2=0011$, and for $i\geq 3$, $X_i=0^i y_{i-2} 1^i$,
where  $y_{i-2}=y_{i-3} v_{i-2} y_{i-3}$, and $y_0=\varepsilon$.
We note that $x$ is  recurrent (i.e., each prefix occurs twice)
and has full complexity (since it contains $z_n$ as a factor for every $n)$.

To show that the word $x$ is self-shuffling, we first show that
$X_{i+1}\in \sh (X_i, X_i)$. Take $N_i=\{0, \ldots, i-1, i+1,
\ldots, 2^i-i, 2^i-i+v_{i-1}|_1, 2^{i+1}-i-1\}$, where $u|_1$
denotes the positions $j$ of a word $u$ in which the $j$-th letter
$u_j$ of $u$ is equal to $1$. Then it is not hard to to see that
$X_i=X_{i+1}[N_i]=X_{i+1}[\{1, \ldots, 2^{i+1}\} \backslash N_i]$.
The self-shuffle of $x$ is built in a natural way concatenating
shuffles of $X_i$ starting with $U_0=V_0=01$, so that $X_0 \cdots
X_{i+1}\in \sh (X_0 \cdots X_i, X_0 \cdots X_i)$.\\

The property of being self-shuffling is quite strong.
Nevertheless, every infinite word $s=s_0s_1s_2\cdots \in
\{1,2,\ldots ,k\}^{\nats}$ is a steering word for some
$k$-self-shuffling word. To see this, we define $\ell \colon
\nats\to \nats$ by $\ell (n)=|s_0s_1\cdots s_n|_{s_n}-1$. Let
$\sim$ denote the equivalence relation on $\nats$ generated by
$n\sim \ell(n)$. Then $\sim$ partitions $\nats$ into $r$-many
equivalence classes where $r$ is defined by the condition that
$a^rb$ is a prefix of $s$ for distinct symbols
$a,b\in\{1,2,\ldots,k\}$.
Let $x(s)=x_0x_1x_2\cdots$ be the infinite word over the alphabet
$\{a_1,\ldots,a_r\}$ defined by: for all $n\in\mathbb{N}$ and
$i\in\{1,\ldots,r\}$, $x_n=a_i$ if and only if $n\sim i$. For each
$j\in\{1,\ldots,k\}$, let $t^j\colon\mathbb{N}\to\mathbb{N}$ be
defined by: for all $n\in\mathbb{N}$, $\ell(t^j(n))=n$ and
$s_{t^j(n)}=j$. This defines a new partition of $\mathbb{N}$ into
$k$ classes $N_1,\ldots, N_k$: for each $j\in\{1,\ldots,k\}$,
$N_j=\{t^j(0)<t^j(1)<\cdots\}$. Then, for all $n\in\mathbb{N}$ and
each $i\in\{1,\ldots,r\}$, we have
\[
x_{t^j(n)}=a_i
\Leftrightarrow t^j(n)\sim i
\Leftrightarrow \ell(t^j(n))\sim i
\Leftrightarrow n\sim i
\Leftrightarrow x_n\sim i.
\]
Hence for each $j\in\{1,\ldots,k\}$, $x(s)[N_j]=x(s)$.
This shows that $x(s)$ is $k$-self shuffling and that $s$ steers the described shuffle of $x(s)$.

We illustrate this with an example: Suppose $s\in \{1,2,3\}^\nats$
begins in $s=1111231223123\cdots$ then $(\ell(n))_{n\geq 0}$
begins in $0,1,2,3,0,0,4, 1,2,1,5,3,2,\ldots$ This defines an
equivalence relation with $4$ classes given by :
$\{0,4,5,6,10,\ldots\}$, $ \{1,7,9,\ldots\}$, $\{2,8,12,\ldots\}$
and $\{3,11, \ldots\}$ which in turn defines the
$3$-self-shuffling word $x(s)=abcdaaabcbadc\cdots
\{a,b,c,d\}^\nats$ having $s$ as a steering word.
It follows from Proposition~\ref{app1} that any coding of $x$ is also $3$-self-shuffling.
It follows that every binary word $s\in \{1,2\}^\nats$ beginning in $a^2b$ with $\{a,b\}=\{1,2\}$
determines as above a unique binary self-shuffling word $x$.
In general there is no evident relationship between the words $s$ and $x$.
For instance, $s$ may be uniformly recurrent while $x$ need not be.

The next two propositions show the invariance of self-shuffling words with respect to the action of a morphism:

\begin{proposition}\label{app1}
Let $\A$ and $\B$ be finite non-empty sets and $\tau: \A\to \B^*$ a morphism.
If $x\in \A^\nats$ is $k$-self-shuffling, then so is $\tau(x)\in \B^\nats$.
\end{proposition}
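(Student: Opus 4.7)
The plan is to transport the shuffle of $x$ through the morphism $\tau$ essentially verbatim, using that $\tau$ commutes with concatenation. Starting from a decomposition $x=\prod_{i=0}^\infty U_i^{(1)}\cdots U_i^{(k)}$ together with $x=\prod_{i=0}^\infty U_i^{(j)}$ for each $j\in\{1,\ldots,k\}$, applying $\tau$ yields
\[
\tau(x)=\prod_{i=0}^\infty \tau(U_i^{(1)})\cdots \tau(U_i^{(k)})\quad\text{and}\quad \tau(x)=\prod_{i=0}^\infty \tau(U_i^{(j)})
\]
for each $j$, which is exactly the statement $\tau(x)\in\sh(\tau(x),\ldots,\tau(x))$.

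To make the argument completely formal, I would use the second (partition) formulation from Definition~\ref{k-self-shuffling}. Fix a partition $\nats=N^1\sqcup\cdots\sqcup N^k$ with $x[N^j]=x$ for all $j$. For $n\in\nats$, let $L(n)=|\tau(x_0x_1\cdots x_{n-1})|$ be the starting position in $\tau(x)$ of the block $\tau(x_n)$. Define
\[
M^j=\bigl\{L(n)+m \,:\, n\in N^j,\ 0\le m<|\tau(x_n)|\bigr\},\qquad j=1,\ldots,k.
\]
Since the blocks $[L(n),L(n)+|\tau(x_n)|)$ partition the positions of $\tau(x)$ as $n$ ranges over $\nats$, the sets $M^1,\ldots,M^k$ form a partition of the position set of $\tau(x)$. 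Reading $\tau(x)$ along $M^j$ in increasing order concatenates the blocks $\tau(x_n)$ for $n\in N^j$ (in their original order), so $\tau(x)[M^j]=\tau(x[N^j])=\tau(x)$.

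The only point requiring a moment's care is the implicit assumption that $\tau(x)\in\B^\nats$, i.e.\ that $\tau(x)$ is actually infinite, together with the requirement that each $M^j$ be infinite. If $\tau$ is non-erasing this is immediate. In general, $\tau(x)$ being infinite forces some letter $a\in\A$ with $\tau(a)\neq\varepsilon$ to occur (necessarily infinitely often) in $x$; since $x[N^j]=x$, this letter also occurs infinitely often at positions of $N^j$, so each $M^j$ inherits infinitely many positions. This is the only obstacle, and it is mild; the proof is otherwise a direct consequence of $\tau$ being a monoid morphism.
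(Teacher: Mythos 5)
Your proof is correct and follows essentially the same route as the paper's: apply $\tau$ to the given factorizations and use that a morphism commutes with concatenation. The extra care you take with erasing morphisms (ensuring $\tau(x)$ is infinite and each $M^j$ remains infinite) is a point the paper leaves implicit, but it does not change the argument.
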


\begin{proof}
Let $x\in \A^\nats$ be $k$-self shuffling. This implies the existence of factorizations
\[
x=\prod_{i=0}^\infty U_i^{(1)}\cdots U_i^{(k)}
    =\prod_{i=0}^\infty U_i^{(1)}
    =\cdots
    =\prod_{i=0}^\infty U_i^{(k)}.
\]

\noindent Whence
\[
\tau(x)=\prod_{i=0}^\infty \tau(U_i^{(1)}\cdots U_i^{(k)})
    =\prod_{i=0}^\infty \tau(U_i^{(1)})\cdots \tau(U_i^{(k)})
    =\prod_{i=0}^\infty \tau(U_i^{(1)})
    =\cdots
    =\prod_{i=0}^\infty \tau(U_i^{(k)})
\]
as required.
\end{proof}

For instance, consider the fixed point $x$ of the substitution
$\psi: a\mapsto abb, $ $b\mapsto a$. It is readily checked that
$x$ is the morphic image of the period doubling word under the
morphism $\tau: 0\mapsto a$, $1\mapsto bb$. This follows from the
fact that $\tau(\sigma^i(0))=\psi^i(a)$,
$\tau(\sigma^i(1))=\psi^i(bb)$ (here $\sigma: 0\mapsto 01,$
$1\mapsto 00$ is the morphism fixing the period doubling word),
which is proved by induction. Since the period doubling word is
$k$-self-shuffling for each $k\geq 2$, the same is true of $x$.

Following \cite{Du}, if $x\in \A^\nats$ is uniformly recurrent,
and $u$ a non-empty prefix of $x$, then one defines the derived
sequence $\mathcal{D}_u(x)$ by coding $x$ as a concatenation of
first returns to $u$. Recall that a nonempty word $v$ is called a
\emph{first return} to a factor $u$ of $x$ if $vu$ is a factor of
$x$, the word $u$ is a prefix of $vu$ and $vu$ does not contain
other occurrences of $u$ than
suffix and prefix. 
The following is an immediate consequence of
Proposition~\ref{app1} since in fact $x$ is the morphic image of
$\mathcal{D}_u(x)$:

\begin{corollary}\label{derived}
Let $x\in \A^\nats$ be a uniformly recurrent word and $u$ a non-empty prefix of $x$.
If the derived word $\mathcal{D}_u(x)$ is $k$-self-shuffling, then $x$ is $k$-self-shuffling.
\end{corollary}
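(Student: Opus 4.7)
The plan is to deduce this as an immediate consequence of Proposition~\ref{app1} by exhibiting $x$ itself as the morphic image of its derived sequence, which is precisely the hint given just before the statement.

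First I would unpack the construction of $\mathcal{D}_u(x)$. Since $x$ is uniformly recurrent and $u$ is a non-empty prefix of $x$, the word $u$ occurs infinitely often in $x$, and the set of first returns to $u$ in $x$ is finite, say $R=\{r_1,\ldots,r_m\}\subseteq \A^+$. If $0=i_0<i_1<i_2<\cdots$ are the successive occurrences of $u$ in $x$, then each factor $x_{i_j}\cdots x_{i_{j+1}-1}$ equals some $r_{c_j}\in R$, and by definition $\mathcal{D}_u(x)=a_{c_0}a_{c_1}a_{c_2}\cdots$ over the alphabet $\{a_1,\ldots,a_m\}$.

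Next I would introduce the morphism $\tau\colon\{a_1,\ldots,a_m\}^*\to\A^*$ by setting $\tau(a_i)=r_i$. By construction of $\mathcal{D}_u(x)$,
\[
\tau(\mathcal{D}_u(x))=\prod_{j\geq 0}\tau(a_{c_j})=\prod_{j\geq 0}r_{c_j}=x,
\]
so $x$ is the morphic image of $\mathcal{D}_u(x)$ under $\tau$. Finally, applying Proposition~\ref{app1} to $\tau$ and the assumed $k$-self-shuffling word $\mathcal{D}_u(x)$ yields that $x=\tau(\mathcal{D}_u(x))$ is $k$-self-shuffling, which is the desired conclusion.

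There is no genuine obstacle in this argument; the only point worth flagging is the role of uniform recurrence. It is used both to ensure that $u$ occurs infinitely often in $x$ (so that $\mathcal{D}_u(x)$ is well-defined as an infinite word) and to guarantee that the alphabet $\{a_1,\ldots,a_m\}$ on which $\mathcal{D}_u(x)$ is written is finite, as required in order for $\tau$ to fit the framework of Proposition~\ref{app1}.
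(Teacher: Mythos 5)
Your proof is correct and follows exactly the route the paper intends: the paper gives no separate proof, merely remarking that the corollary is an immediate consequence of Proposition~\ref{app1} because $x$ is the morphic image of $\mathcal{D}_u(x)$, and your argument supplies precisely the details of that morphism (coding letters to return words) together with the correct observation that uniform recurrence guarantees finitely many return words. Nothing further is needed.
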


The notation $w=v^{-r}u$ means $u=v^r w$.

\begin{proposition}\label{app2}
Let $\tau: \A\to \A^*$ be a morphism,
and $x\in \A^\nats$ be a fixed point of $\tau$.
\begin{enumerate}
    \item Let $u$ be a prefix of $x$ and $l$ be a positive integer
        such that $\tau^l(a)$ begins in $u$ for each $a\in \A$.
        Then if $x$ is $k$-self-shuffling, then so is $u^{-1}x$.
    \item Let $u\in \A^*$,  and let $l$ be a positive integer such that $\tau^l(a)$
        ends in $u$ for each $a\in \A$.
Then if $x$ is $k$-self-shuffling, then so is $ux$.
\end{enumerate}
\end{proposition}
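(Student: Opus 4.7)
The plan is to reduce both parts to the setting of Proposition~\ref{app1} by applying $\tau^l$ to a self-shuffling factorization of $x$, and then to redistribute the copies of $u$ across block boundaries.

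Since $x$ is a fixed point of $\tau$, it is also a fixed point of $\tau^l$, so after replacing $\tau$ by $\tau^l$ I may assume $l=1$. Suppose $x$ is $k$-self-shuffling, witnessed by non-empty factors $U_i^{(1)},\ldots ,U_i^{(k)}$ with
\[
x=\prod_{i=0}^\infty U_i^{(1)}\cdots U_i^{(k)}=\prod_{i=0}^\infty U_i^{(j)}\quad (1\le j\le k).
\]
Applying $\tau$ and using $\tau(x)=x$, as in Proposition~\ref{app1}, I get the same identities with each $U_i^{(j)}$ replaced by $\tau(U_i^{(j)})$. Because every $U_i^{(j)}$ is non-empty, the hypothesis on $\tau$ guarantees that $\tau(U_i^{(j)})$ begins in $u$ (for part~(1)) or ends in $u$ (for part~(2)). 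If $u=\varepsilon$ there is nothing to prove, so assume $u\ne\varepsilon$.

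For part~(1), write $\tau(U_i^{(j)})=u\,V_i^{(j)}$ with $V_i^{(j)}\in\A^*$. Then
\[
x=\prod_{i=0}^\infty (uV_i^{(1)})(uV_i^{(2)})\cdots (uV_i^{(k)})=\prod_{i=0}^\infty uV_i^{(j)}\quad (1\le j\le k).
\]
Stripping the very first $u$ and, in each expression, shifting every remaining $u$ onto the tail of the preceding $V$-block, one obtains
\[
u^{-1}x=\prod_{i=0}^\infty (V_i^{(1)}u)(V_i^{(2)}u)\cdots (V_i^{(k)}u)=\prod_{i=0}^\infty V_i^{(j)}u\quad (1\le j\le k).
\]
Setting $Y_i^{(j)}:=V_i^{(j)}u$, which is non-empty since $u\ne\varepsilon$, exhibits $u^{-1}x$ as a $k$-self-shuffling. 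Part~(2) is the mirror image: write $\tau(U_i^{(j)})=W_i^{(j)}u$ and, after prepending $u$, push each internal $u$ onto the head of the following $W$-block, which yields the factorization $ux=\prod_i (uW_i^{(1)})\cdots (uW_i^{(k)})=\prod_i uW_i^{(j)}$.

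The only real subtlety is the \emph{regrouping step}: once $\tau$ is applied, it is essentially forced that a copy of $u$ sits at each block boundary, and the task is to reassociate these $u$'s so that both the full horizontal concatenation reproduces $u^{-1}x$ (resp.\ $ux$) and each of the $k$ vertical subwords does as well. The consistency is exactly what the identities above verify, and the mild hypothesis $U_i^{(j)}\ne\varepsilon$ (inherent in the definition of self-shuffling) is what makes every $u$ appear in the right place to be moved. No other delicate issue arises.
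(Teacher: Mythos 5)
Your argument is correct and coincides with the paper's own proof: both apply $\tau^l$ to the given factorization, write $\tau^l(U_i^{(j)})=uV_i^{(j)}$, and regroup by setting the new blocks to be $V_i^{(j)}u$, which simultaneously reproduces $u^{-1}x$ horizontally and in each of the $k$ component products. The only cosmetic differences are your explicit reduction to $l=1$ and the separate (trivial) treatment of $u=\varepsilon$.
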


\begin{proof}
We prove only item (1) since the proof of (2) is essentially identical.
Suppose
\[
x=\prod_{i=0}^\infty U_i^{(1)}\cdots U_i^{(k)}
=\prod_{i=0}^\infty U_i^{(1)}
=\cdots
=\prod_{i=0}^\infty U_i^{(k)}
\]
with each $U_i^{(j)}\in \A^+$.
By assumption, for each $i\geq 1$ and $j= 1,2,\ldots ,k$,
we can write $\tau^l(U_i^{(j)})=uV_i^{(j)}$ with each $V_i^{(j)}\in \A^*$.
Put $X_i^{(j)}=V_i^{(j)}u$.
Then since
\[
x=\tau^l(x)
=\prod_{i=0}^\infty \tau^l(U_i^{(1)}\cdots U_i^{(k)})
=\prod_{i=0}^\infty \tau^l(U_i^{(1)})\cdots \tau^l(U_i^{(k)})
=\prod_{i=0}^\infty \tau^l(U_i^{(1)})
=\cdots
= \prod_{i=0}^\infty \tau^l(U_i^{(k)}),
\]
we deduce that
\[
u^{-1}x= \prod_{i=0}^\infty X_i^{(1)}\cdots X_i^{(k)}
=\prod_{i=0}^\infty X_i^{(1)}
=\cdots
=\prod_{i=0}^\infty X_i^{(k)}.
\]
\end{proof}

A substitution $\tau: \A\to \A^*$ is \emph{primitive} if for each
pair $a,b \in \A$ there exists $n$ such that $b$ occurs in
$\tau^n(a)$. 

\begin{corollary}\label{primitive}
Let $\tau: \A\to \A^*$ be a primitive substitution, and $a\in \A$.
Suppose $\tau(b)$ begins (respectively ends) in $a$  for each letter $b\in \A$.
Suppose further that the fixed point $\tau^\infty(a)$ is $k$-self-shuffling.
Then every right shift (respectively left shift)  of $\tau^\infty(a)$ is $k$-self-shuffling.
\end{corollary}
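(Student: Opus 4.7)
The plan is to reduce this corollary directly to Proposition~\ref{app2}. I would treat the right-shift case in full---where $\tau(b)$ begins in $a$ for every $b\in\A$---since the left-shift case is perfectly symmetric using Proposition~\ref{app2}(2) in place of (1). Set $x=\tau^\infty(a)$ and fix an arbitrary $n\geq 0$; let $u$ denote the prefix of $x$ of length $n$, so that the $n$th right shift of $x$ is precisely $u^{-1}x$. By Proposition~\ref{app2}(1), it is enough to produce a single positive integer $l$ with the property that $\tau^l(b)$ begins in $u$ for every $b\in\A$.

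The central observation---where most of the care is needed---is that the hypothesis ``$\tau(b)$ begins in $a$ for all $b$'' propagates under iteration so as to force a common initial segment of $\tau^l(b)$ that does not depend on $b$. Writing $\tau(b)=a\,v_b$ for each $b\in\A$, one iteration produces $\tau^l(b)=\tau^{l-1}(a)\,\tau^{l-1}(v_b)$, so every $\tau^l(b)$ begins in $\tau^{l-1}(a)$. Two further remarks make this usable. First, since $\tau(a)$ itself begins in $a$, the word $\tau^{l-1}(a)$ is a prefix of $\tau^l(a)$, and hence of $x=\tau^\infty(a)$. Second, primitivity of $\tau$ guarantees $|\tau^{l-1}(a)|\to\infty$ as $l\to\infty$, so one can choose $l$ large enough that $|\tau^{l-1}(a)|\geq n$; this forces $u$ to be a prefix of $\tau^{l-1}(a)$, and therefore of $\tau^l(b)$ for every $b\in\A$.

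With such an $l$ in hand, Proposition~\ref{app2}(1) immediately yields that $u^{-1}x$ is $k$-self-shuffling, and since $n$ was arbitrary every right shift of $x$ inherits the property. The left-shift case proceeds identically: replace ``prefix'' by ``suffix'' throughout, use the hypothesis that $\tau(b)$ ends in $a$ to conclude that $\tau^l(b)$ ends in $\tau^{l-1}(a)$ for every $b$, and invoke Proposition~\ref{app2}(2) instead of (1). I do not expect any genuine obstacle beyond the indexing bookkeeping just outlined; the real content of the corollary is the observation that primitivity, combined with the ``begins (resp.\ ends) in $a$'' hypothesis, forces any prefix (resp.\ suffix) of $x$ to be reached by some iterate $\tau^l$ uniformly in the seed letter $b$.
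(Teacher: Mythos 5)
Your argument is correct and is exactly the derivation the paper intends: the paper presents Corollary~\ref{primitive} as an immediate consequence of Proposition~\ref{app2} with no further detail, and your proof supplies precisely the missing verification, namely that any prefix (resp.\ suffix) $u$ of $x$ is a prefix (resp.\ suffix) of $\tau^l(b)$ for every $b\in\A$ once $l$ is chosen with $|\tau^{l-1}(a)|\geq |u|$. The only cosmetic remark is that the growth of $|\tau^{l-1}(a)|$ is really guaranteed by the existence of the infinite fixed point (which forces $|\tau(a)|\geq 2$ with $\tau(a)$ beginning in $a$) rather than by primitivity alone, but this changes nothing in the argument.
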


\begin{proof} Since $\tau$ is primitive, the lengths of the images
$\tau^l(b)$, $b\in \A$, grow as $l$ grows. Suppose $\tau(b)$
begins (respectively ends) with $a$  for each letter $b\in \A$
(the other case is symmetric). So for every prefix $u$ of
$\tau^\infty(a)$ there exists $l$ such that $u$ is a prefix of
$\tau^l(b)$ for all letters $b$. Since the word $\tau^\infty(a)$
is also a fixed point of the morphism $\tau^l$, the claim follows
from Proposition~\ref{app2}. \end{proof}

\begin{remark}
Since the Fibonacci word is self-shuffling
and is fixed  by the primitive substitution $0\mapsto 01$, $1\mapsto 0$,
it follows from Corollary~\ref{primitive} that  every tail of the Fibonacci word is self-shuffling.
\end{remark}

There are a number of necessary conditions that a self-shuffling word
must satisfy, which may be used to deduce that a given word is not self-shuffling.
For instance:

\begin{definition}
The {\em shuffling delay} of a self-shuffling word $x$ is the
length of the shortest prefix $u$ of $x$ such that
$(ua)^{-1}x\in\sh(u^{-1}x,a^{-1}x)$ where $a$ is the letter
following the prefix $u$ in $x$. In other words, the shuffling
delay is the length of the shortest prefix of $x$ after which one
can actually start self-shuffling $x$.
\end{definition}

Two finite words are \emph{abelian equivalent} if for each letter
$a$ they have the same number of occurrences of $a$. A word $u$
has a \emph{border} (resp., an \emph{Abelian border}) of length
$n$, $0<n<|u|$, if the prefix of length $n$ of $u$ is equal
(resp., Abelian equivalent) to its suffix of length $n$. A word
$u$ is \emph{unbordered} (resp., \emph{Abelian unbordered}) if it
does not have borders (resp., Abelian borders).

\begin{proposition}\label{abborders}
If $x\in \A^\nats$ is self-shuffling, then for each positive
integer $N$ there exists a positive integer $M$ such that every
prefix  $u$ of $x$ with $|u|\geq M$ has an Abelian border $v$ with
$|u|/2 \geq |v|\geq N$. Moreover, every prefix of $x$ longer than
the shuffling delay is Abelian bordered. In particular, $x$ must
begin in only a finite number of Abelian unbordered words.
\end{proposition}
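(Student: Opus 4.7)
The plan is to exploit a single Parikh-vector identity built into any self-shuffling of $x$. I first fix a witness to the self-shuffling, namely a partition $\nats=N^{1}\cup N^{2}$ into infinite sets with $x[N^{1}]=x[N^{2}]=x$. For each $k\in\nats$, let $p(k)=|N^{1}\cap[0,k)|$ and $q(k)=|N^{2}\cap[0,k)|$, so that $p(k)+q(k)=k$, the functions $p,q$ are non-decreasing, and $p(k),q(k)\to\infty$ because both $N^{1}$ and $N^{2}$ are infinite.

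The heart of the argument is the following identity. Write $\Psi$ for the Parikh map and $x[0{:}k]$ for the prefix of $x$ of length $k$. By the defining property of the partition, the letters of $x$ at the positions in $N^{1}\cap[0,k)$, read in order, spell out exactly $x[0{:}p(k)]$; similarly the letters at positions in $N^{2}\cap[0,k)$ spell out $x[0{:}q(k)]$. Adding Parikh vectors yields
\[
\Psi(x[0{:}k])\;=\;\Psi(x[0{:}p(k)])+\Psi(x[0{:}q(k)]).
\]
Equivalently, the suffix of $x[0{:}k]$ of length $p(k)$ is Abelian equivalent to the prefix of $x[0{:}k]$ of length $p(k)$, so $x[0{:}k]$ has an Abelian border of length $p(k)$ and, symmetrically, one of length $q(k)$; in particular it has an Abelian border of length $\min(p(k),q(k))\leq k/2$.

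The three assertions then fall out in turn. For the quantitative one, given $N$ I would choose $M$ so large that $\min(p(k),q(k))\geq N$ for every $k\geq M$ (possible since $p,q\to\infty$); each prefix $u=x[0{:}k]$ with $|u|\geq M$ then carries an Abelian border $v$ with $|u|/2\geq|v|\geq N$. For the second assertion, I would invoke a self-shuffling realising the shuffling delay $d$: the letter immediately following the initial block of length $d$ is drawn from the other copy of $x$, so for this particular shuffle $p(k),q(k)\geq 1$ whenever $k>d$, and the identity supplies an Abelian border of positive length. The third assertion is then immediate, since the Abelian border-free prefixes are confined to lengths $\leq d$.

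The only delicate step is the Parikh identity itself; it amounts to careful bookkeeping of which letters of the prefix of $x$ come from which copy in the shuffle, but one must keep the two indexings, positions along $x$ versus positions within a single copy, properly separated. Once that identity is in hand, all three conclusions follow by elementary counting.
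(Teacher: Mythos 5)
Your proof is correct and takes essentially the same route as the paper's: your Parikh identity $\Psi(x[0{:}k])=\Psi(x[0{:}p(k)])+\Psi(x[0{:}q(k)])$ is precisely the paper's observation that a prefix $u\in\sh(U',V')$ of two prefixes $U',V'$ of $x$ splits as $u=U'U''$ with $U''$ Abelian equivalent to $V'$, yielding the Abelian border. The differences are only presentational: you argue directly in the partition formulation with the counting functions $p,q$ and choose $M$ so that $\min(p(k),q(k))\ge N$, while the paper argues by contradiction from the factorization $x=\prod U_iV_i$ and takes $M=|\prod_{i=0}^{N}U_iV_i|$ to force both pieces past length $N$.
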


\begin{proof}
Suppose to the contrary that there exist factorizations
$x=\prod_{i=0}^\infty U_iV_i=\prod_{i=0}^\infty U_i=\prod_{i=0}^\infty V_i$
with $U_i,V_i \in \A^+$, and there exists $N$ such that for every $M$
there exists a prefix $u$ of $x$ with $|u|\geq M$ which has no Abelian borders of length between $N$ and $|u|/2$.
Take $M=|\prod_{i=0}^{N} U_iV_i|$ and a prefix $u$ satisfying these conditions.
Then there exist non-empty proper prefixes $U'$ and $V'$ of $u$ such that $u\in \sh (U',V')$ with $|U'|,|V'|>N$.
Without loss of generality we may assume $|V'|\le |u|/2$.
 Writing $u=U'U''$ it follows that $U''$ and $V'$ are Abelian equivalent.
This contradicts that $u$ has no Abelian borders of length between $N$ and $|u|/2$.

For the second part of the statement, just observe that every prefix of $x$ longer than the shuffling delay
is the shuffle of two proper prefixes, and hence is Abelian bordered.
\end{proof}

We illustrate some applications of Proposition~\ref{abborders}:\\

\noindent{\bf Fibonacci word (revisited):} We already saw that the
Fibonacci word $x=0100101001\cdots$ is self-shuffling. In
contrast, the word $y=0x$ is not self-shuffling. It is well known
that $y$ begins in infinitely many prefixes of the form $0B1$ with
$B$ a palindrome. It is clear that $0B1$ is  Abelian unbordered.
It follows from Proposition \ref{abborders} that $y$ is not self-shuffling. \\

\noindent{\bf Paper-folding word:}
The paper-folding word
\[
x= 00100110001101100010\cdots
\]
is a Toeplitz word generated by the pattern  $u = 0?1?$ (see,
e.g., \cite{CaK}). It is readily verified that $x$ begins in
arbitrarily long Abelian unbordered words and hence by Proposition
\ref{abborders} is not self-shuffling. More precisely, the
prefixes $u_j$ of $x$ of length $n_j=2^j-1$ are Abelian
unbordered. Indeed, it is verified that for each $k<n_j$, we have
$|{\rm pref}_{k}(u_j)|_0>k/2$ while $|{\rm suff}_k(u_j)|_0\leq
k/2$.
Here ${\rm pref}_{k}(u)$ (resp., ${\rm suff}_k(u)$) denotes the prefix (resp., suffix) of length $k$ of a word $u$. \\

\noindent{\bf A $3$-self-shuffling word which is not self-shuffling:}
Let $y$ denote the fixed point of the substitution $\sigma: 0 \mapsto 0001$ and $1 \mapsto 0101$, and put
\[
x=0^{-2}y=01000100010101000100010001010100010001000101010001010100\cdots
\]
Then for each prefix $u_j$ of $x$ of length $4^j-2$, the longest
Abelian border of $u_j$ of length less than or equal to
$(4^j-2)/2$ has length  $2$. This could be proved by induction on
$j$ using the fact that $\sigma^j(1)$ differs from $\sigma^j(0)$
in $j$ positions and has $1$ instead of $0$ in these positions.
Hence $x$ is not self-shuffling (see Proposition~\ref{abborders}).
The 3-shuffle is given by the following: \small
\begin{align*}
& U_0=0100,& &\hskip-9pt U_1=01,& &\hskip-10pt \dots,\hskip-8pt & &U_{4i+2}=\varepsilon,&
&\hskip-6pt U_{4i+3}=\sigma^{i+1}(0100),\\
& & & & & & &U_{4i+4}=\sigma(0),& & \hskip-6pt U_{4i+5}=(\sigma(0))^{-1}\sigma^{i+1}(01), \\
& V_0=0100,& &\hskip-9pt V_1=01,& &\hskip-10pt \dots, \hskip-8pt & &V_{4i+2}=(\sigma(0))^{-1}\sigma^{i+1}(0),&
& \hskip-6pt V_{4i+3}=\sigma(0),\\
& & & & & & &V_{4i+4}=(\sigma(0))^{-1}\sigma^{i+1}(01)\sigma(0),& & \hskip-6pt V_{4i+5}=\varepsilon,\\
& W_0=01,& &\hskip-9pt W_1=(\sigma(0))^2,& &\hskip-10pt \dots,\hskip-8pt & &
W_{4i+2}=\varepsilon,& & \hskip-6pt W_{4i+3}=(\sigma(0))^{-1}\sigma^{i+1}(01),\\
& & & & & & &W_{4i+4}=\varepsilon,& &\hskip-6pt W_{4i+5}=\sigma^{i+2}(0)\sigma(0).
\end{align*}
\normalsize
It is then verified  that
\[
x=\prod_{i=0}^\infty U_i V_i W_i=\prod_{i=0}^\infty U_i
=\prod_{i=0}^\infty V_i=\prod_{i=0}^\infty W_i.
\] To prove this, this it is enough to prove that the morphism
$\sigma$ fixes all the four products of words, from which it follows that $x$ is $3$-self-shuffling.\\

An extension of the abelian borders argument (Proposition \ref{abborders})
gives both a necessary and sufficient condition
for self-shuffling in terms of Abelian borders (which is however difficult to check in practice).
For $u\in \A^*$ let $\Psi(u)$ denote the \emph{Parikh vector} of $u$, i.e., $\Psi(u)=(|u|_a)_{a\in \A}$.

\begin{definition}\label{def_graph}
For $x\in\A^\mathbb{N}$ and integer $k\geq 2$, define a directed
graph $G_x^k=(V_x^k,E_x^k)$ with the vertex set
\[
V_x^k=\{(i_1,\ldots, i_k)\in\mathbb{N}^k \colon \sum_{j=1}^k
\Psi(\pref_{i_j}(x))=\Psi(\pref_{i_1+\cdots+i_k}(x))\},
\]
and the edge set
\[
E_x^k=\big\{\big((i_1,\ldots, i_k),(i'_1,\ldots, i'_k)\big)\in V_x^k\times V_x^k\colon
i_j\le i'_j\mbox{ for } j=1,\ldots,k
\mbox{ and }\sum_{j=1}^k i_j+1=\sum_{j=1}^k i'_j\big\}.
\]
We say that $G_x^k$ {\it connects $\vec{0}$ to $\vec{\infty}$} if
there exists an infinite path $v^0v^1v^2\cdots$ in $G_x^k$ such
that $v^0=(0,\ldots,0)$ and $v_j^n\to\infty$ as $n\to\infty$ for
any $j=1,\ldots,k$, where $v^n=(v_1^n,\ldots,v_k^n)\in V_x^k$.
\end{definition}

\begin{theorem}\label{graph}
An infinite sequence $x\in\A^\nats$ is $k$-self-shuffling if and
only if the graph $G_x^k$ connects $\vec{0}$ to $\vec{\infty}$.
\end{theorem}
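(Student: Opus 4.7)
The plan is to prove both directions by translating shuffles into paths in $G_x^k$ and vice versa, with the Parikh vertex condition serving as the exact algebraic constraint that keeps the translation consistent.

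For the \emph{only if} direction, suppose $x$ is $k$-self-shuffling with associated partition $\nats = N^1 \sqcup \cdots \sqcup N^k$ satisfying $x[N^j]=x$. I would define $v_j^n := |N^j\cap\{0,\ldots,n-1\}|$ and set $v^n=(v_1^n,\ldots,v_k^n)$. Since $x[N^j]=x$, the first $v_j^n$ letters of $x[N^j]$, which form $\pref_{v_j^n}(x)$, are precisely the letters of $x$ at the positions in $N^j\cap\{0,\ldots,n-1\}$ read in order; summing Parikh vectors over $j$ therefore recovers $\Psi(\pref_n(x))$, so $v^n\in V_x^k$. Between $v^{n-1}$ and $v^n$ exactly one coordinate (the unique $j$ with $n-1\in N^j$) increases by one, yielding an edge in $E_x^k$; and each $N^j$ being infinite forces $v_j^n\to\infty$, producing the desired path.

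For the \emph{if} direction, start from an infinite path $v^0v^1\cdots$ in $G_x^k$ with $v^0=\vec 0$ and every $v_j^n\to\infty$. Since the sum of coordinates increases by one at each step, at step $n$ there is a unique coordinate $j_n$ with $v_{j_n}^n = v_{j_n}^{n-1}+1$; I would put $N^j := \{n-1 : j_n = j\}$. This partitions $\nats$ into sets that are infinite thanks to the hypothesis $v_j^n\to\infty$. The content of the argument is to show $x[N^j]=x$: writing $N^j=\{n_0<n_1<\cdots\}$ this is the statement $x_{n_m}=x_m$ for every $m\ge 0$. Subtracting the defining Parikh identities of $v^{n_m+1}$ and $v^{n_m}$ and using that only the $j$-th coordinate changes between them, the left-hand side reduces to $\Psi(\pref_{m+1}(x))-\Psi(\pref_m(x))=\Psi(x_m)$, while the right-hand side reduces to $\Psi(\pref_{n_m+1}(x))-\Psi(\pref_{n_m}(x))=\Psi(x_{n_m})$, forcing $x_{n_m}=x_m$ as required.

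There is no real obstacle; the heart of the argument is the cancellation in the last step, which exhibits that the Parikh membership condition on $V_x^k$ is tuned precisely so that whenever a path advances its $j$-th coordinate, the new letter exposed in $x$ matches the next letter owed to the $j$-th copy. All the other shuffle requirements (that $\nats$ is partitioned, that each part reproduces $x$, that the path exhausts all positions) follow mechanically from the sum-increments-by-one structure of the edges together with the $v_j^n\to\infty$ hypothesis.
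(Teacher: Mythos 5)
Your proposal is correct and follows essentially the same route as the paper's proof: in both directions you use the counting functions $v_j^n=|N^j\cap\{0,\dots,n-1\}|$ (the paper's $t_j(n)$) to pass between partitions and paths, and the converse rests on the same subtraction of consecutive Parikh identities to force $x_{n_m}=x_m$. No gaps.
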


\begin{proof}
If $x=x_0x_1\cdots$ with the $x_i \in \A$ is $k$-self-shuffling,
then there exists a partition of $\nats$ into infinite
subsets $N^j\subset\nats$ for $j=1,\ldots,k$ such that $x[N^j]=x$.
Let $N^j=\{N^j_0<N^j_1<N^j_2<\cdots\}$ for every $j\in\{1,\ldots,k\}$.
For any $j\in\{1,\ldots,k\}$ and $n\in\nats$,
let $t_j(n)$ be the integer $m$ such that $N^j_{m-1}<n\le N^j_m$.
Then $\Psi(\pref_{n}(x)) =\sum_{j=1}^k\Psi(\pref_{t_j(n)}(x[N^j]))$.
Since $x[N^j]=x$ for $ j=1,\ldots,k$,
we have
\[
\sum_{j=1}^k \Psi(\pref_{t_j(n)}(x))=\Psi(\pref_{n}(x)).
\]
Moreover $t_1(n)+\cdots+t_k(n)=n$.
Therefore $v^n:=(t_1(n),\ldots,t_k(n))\in V^k_x$ for all $n\in\nats$.
Since
\[
t_j(n+1)= \left\{
\begin{array}{ll}
t_j(n)+1&\mbox{ if } n\in N^j\\
t_j(n)&\mbox{ else, }
\end{array}
\right.
\]
we obtain $(v^n,v^{n+1})\in E^k_x$.
Since this holds for any $n\in\nats$ and each $N^j$ is an infinite set,
$G_x^k$ connects $\vec{0}$ to $\vec{\infty}$.

Conversely, assume that $G_x^k$ connects $\vec{0}$ to
$\vec{\infty}$. Let $v^0v^1v^2\cdots$ be an infinite path in
$G_x^k$ such that $v^0=(0,\ldots,0)$ and $v_j^n\to\infty$ as
$n\to\infty$ for any $j=1,\ldots,k$, where
$v^n=(v_1^n,\ldots,v_k^n)\in V_x^k$. Define
\[
N^j=\{n\in\nats\colon v_j^{n+1}>v_j^n\}\mbox{ for } j=1,\ldots,k.
\]
Then the sets $N^j$ give a partition of $\nats$.
Let $N^j=\{N^j_0<N^j_1<N^j_2<\cdots\}$ for $j=1,\ldots,k$.
For any $j=1,\ldots,k$ and $m\in\nats$, let $n=N^j_m$.
Since $(v^n,v^{n+1})\in E_x^k$, we have
\[
\Psi(\pref_{n+1}(x))-\Psi(\pref_{n}(x))
=\Psi(\pref_{m+1}(x))-\Psi(\pref_{m}(x)).
\]
Hence $(x[N^j])_m=x_n=x_m$.
Thus, the sets $N^j$ satisfy Definition~\ref{k-self-shuffling}, so we have a $k$-self-shuffle.
\end{proof}

\section{The Thue-Morse word is self-shuffling}

Theorem~\ref{graph} gives a constructive necessary and
sufficient condition for self-shuffling since a path to infinity defines a self-shuffle.
While the sufficient conditions in the previous section can be applied to show that certain words are not self-shuffling,
in general to prove that a word is self-shuffling, one must actually explicitly exhibit a shuffle.

\begin{theorem}\label{th_TM}
The Thue-Morse word $\mathbf{T}=011010011001\cdots$ fixed by the
substitution $\tau $ mapping $0\mapsto 01$ and $1\mapsto 10$ is
self-shuffling.
\end{theorem}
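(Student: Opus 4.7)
The plan is to exhibit explicit factorizations
\[
\mathbf{T}=\prod_{i\ge 0} U_iV_i=\prod_{i\ge 0} U_i=\prod_{i\ge 0} V_i
\]
with $U_i,V_i\in\{0,1\}^+$. The difficulty is that no single identity of the form $\tau^k(\mathbf{T})=\mathbf{T}$ hands us such a shuffle: the obvious even/odd split exhibits $\mathbf{T}$ as a shuffle of $\mathbf{T}$ with its complement $\overline{\mathbf{T}}$, since $t_{2n}=t_n$ while $t_{2n+1}=1-t_n$, rather than as a shuffle of two copies of $\mathbf{T}$. Hence the construction must exploit the hierarchical substitutive structure of $\mathbf{T}$ more subtly.

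First, I would record the morphic identities underlying the argument: for every $n\ge 0$, $\mathbf{T}=\tau^n(t_0)\tau^n(t_1)\tau^n(t_2)\cdots$ and $\tau^{n+1}(a)=\tau^n(a)\tau^n(1-a)$; moreover, for $n\ge 1$ the blocks $\tau^n(0)$ and $\tau^n(1)$ are Abelian equivalent, each containing exactly $2^{n-1}$ occurrences of each letter. These identities allow $\mathbf{T}$ to be parsed at any scale $2^n$ as a block sequence that is again $\mathbf{T}$, and Parikh-equivalent blocks to be freely interchanged.

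Next, I would design the blocks $U_i$ and $V_i$ hierarchically using these identities together with one or two auxiliary morphisms associated to $\tau$. For each $i$, the piece $U_iV_i$ is built from $\tau^{n_i}$-blocks (with scale $n_i$ depending on $i$) drawn from two copies of $\mathbf{T}$, with the switching rule between copies chosen so that, at every stage, the copy we draw from has its pointer exactly where it must be in order to supply the next $\tau^{n_i}$-block of $\mathbf{T}$. The three shuffle identities $\prod U_iV_i=\prod U_i=\prod V_i=\mathbf{T}$ then follow by induction on the scale, using the substitutive identities to match longer and longer prefixes of $\mathbf{T}$ simultaneously in the combined concatenation and in each of the two subsequences, and then passing to the limit.

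The main obstacle will be the design of the switching rule. A purely greedy construction (draw from copy $1$ while its next letter matches the required output letter, and otherwise switch) stalls in bounded time, because the factors $00$ and $11$ present in $\mathbf{T}$ eventually force both pointers into positions from which neither copy can supply the required output letter. The correct rule must anticipate these obstructions by changing sources at hierarchically organized instants of unbounded scale, and this is precisely where the "different morphisms associated with the Thue--Morse word" come in: they encode the correct anticipation and let us verify the three product identities by reduction to a finite collection of letter-level checks. Once the switching rule is in place the verification is routine; the creative heart of the proof is in finding the rule itself.
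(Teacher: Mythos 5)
There is a genuine gap: your text is a strategy outline, not a proof. You correctly diagnose the obstruction (the even/odd split yields a shuffle of $\mathbf{T}$ with its complement, a greedy letter-matching rule stalls, and the fix must come from auxiliary morphisms organizing switches at unboundedly many scales), and this matches the spirit of the paper's argument. But you then write that ``the creative heart of the proof is in finding the rule itself'' and stop before finding it. The switching rule is the entire mathematical content of the theorem; without it nothing has been proved. Moreover, the reduction you promise --- ``verify the three product identities by reduction to a finite collection of letter-level checks'' --- is only meaningful once the finite data (which auxiliary alphabet, which substitution on it, which decoding morphisms) are specified, and your sketch gives no indication of what they are or why a finite such system should exist.

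For comparison, the paper supplies exactly the missing data: a substitution $\sigma$ on $\{1,2,3,4\}$ with $\sigma(1)=12$, $\sigma(2)=31$, $\sigma(3)=34$, $\sigma(4)=13$, together with two morphisms $g,h\colon\{1,2,3,4\}\to\{0,1\}^*$ built from the words $u=01101$, $v=001$ and their complements. The three key facts are: (i) $g(\sigma(a))\in\sh\bigl(g(a),h(a)\bigr)$ for each letter $a$, checked by four explicit finite factorizations; (ii) $h\circ\sigma=\tau\circ h$, which forces $h(w)=\mathbf{T}$ for the fixed point $w$ of $\sigma$; and (iii) $ug(w)=h(w)=\mathbf{T}$, proved via the conjugation identities $ug(1)=h(1)\bar{u}$, etc. Decomposing $\mathbf{T}$ as $\prod_i g(\sigma(w_i))$, $\prod_i g(w_i)$ and $\prod_i h(w_i)$ (with the prefix $u$ absorbed into the $i=0$ terms) then assembles the self-shuffle. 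If you want to complete your proposal, you need to produce an analogous finite certificate; the hierarchical framing alone does not establish that one exists.
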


\begin{proof}
For $u\in\{0,1\}^*$ we denote by $\bar{u}$ the word obtained from $u$ by exchanging $0$s and $1$s.
Let $\sigma : \{1,2,3,4\}\to \{1,2,3,4\}^*$ be the substitution defined by
\[
\sigma (1)=12,\quad \sigma (2)=31,\quad \sigma(3)=34, \quad  \sigma (4)=13.
\]
Set  $u=01101$ and $v=001;$ note that $uv$ is a prefix of $\mathbf{T}$.
Also define morphisms $g,h:  \{1,2,3,4\}\to \{0,1\}^*$ by
\[
g (1)=v\bar{u},\quad g (2)=\bar{v}\bar{u},\quad g (3)=\bar{v}u, \quad  g (4)=vu
\]
and
\[
h (1)=uv,\quad h (2)=\bar{u}\bar{v},\quad h (3)=\bar{u}\bar{v},
\quad  h (4)=uv.
\]

We will make use of the following lemmas:

\begin{lemma}\label{shuff1}
$g(\sigma (a))\in \sh(g(a),h(a))$ for each $a\in \{1,2,3,4\}$.
In particular $ug(\sigma(1))\in \sh(ug(1),h(1))$.
\end{lemma}

\begin{proof}
For $a=1$ we note that
\[
g(\sigma(1))=g(12)=v\bar{u}\bar{v}\bar{u}=0011001011010010.
\]
Factoring $0011001011010010=0\cdot011\cdot0\cdot010\cdot11\cdot01\cdot0010$ we obtain
\[
g(\sigma(1))\in \sh(00110010,01101001)=\sh (v\bar{u},uv)=\sh(g(1),h(1)).
\]
Similarly, for $a=2$ we have
\[
g(\sigma(2))=g(31)=\bar{v}uv\bar{u}=1100110100110010.
\]
Factoring $1100110100110010=1\cdot100\cdot1\cdot1\cdot010\cdot0110\cdot010$ we obtain
\[
g(\sigma(2))\in \sh(11010010,10010110)=\sh(\bar{v}\bar{u},\bar{u}\bar{v})=\sh (g(2),h(2)).
\]

\noindent Exchanging  $0$s and $1$s in the previous two shuffles yields
\[
g(\sigma(3))=g(34)=\bar{v}uvu\in \sh(\bar{v}u,\bar{u}\bar{v})=\sh(g(3),h(3))
\]
and
\[
g(\sigma(4))=g(13)=v\bar{u}\bar{v}u\in \sh(vu,uv)=\sh(g(4),h(4)).
\]
\end{proof}

It is readily verified that

\begin{lemma}
$h(\sigma (a))= \tau(h(a))$ for each $a\in \{1,2,3,4\}$.
\end{lemma}

\noindent Let $w=w_0w_1w_2w_3\cdots$ with $w_i\in \{1,2,3,4\}$
denote the fixed point of $\sigma$ beginning in $1$.
As a  consequence of the previous lemma we deduce that 

\begin{lemma}\label{T1}
$\mathbf{T}=h(w)$.
\end{lemma}

\begin{proof}
In fact $\tau(h(w))=h(\sigma(w))=h(w)$ from which it follows that $h(w)$ is one of the two fixed points of $\tau$.
Since $h(w)$ begins in $h(1)$ which in turn begins in $0$, it follows that  $\mathbf{T}=h(w)$.
\end{proof}

\begin{lemma}\label{T2}
$\mathbf{T}=ug(w)$.
\end{lemma}

\begin{proof}
It is readily verified that:
\[
ug(1)=h(1)\bar{u}, \quad\bar{u}g(2)=h(2)\bar{u}, \quad
\bar{u}g(3)=h(3)u, \quad ug(4)=h(4)u.
\]
Moreover, each occurrence of $g(1)$ and $g(4)$ in $ug(w)$ is preceded by $u$
while each occurrence of $g(2)$ and $g(3)$ in $ug(w)$ is preceded by $\bar{u}$.
It follows that $ug(w)=h(w)$ which by the preceding lemma equals $\mathbf{T}$.
\end{proof}

\noindent To proceed with the proof of Theorem \ref{th_TM}, set
\[
A_0=ug(\sigma(w_0))\quad \mbox{ and }\quad A_i=g(\sigma(w_i)), \mbox{ for }  i\geq 1
\]
\[
B_0=ug(w_0)\quad \mbox{ and }\quad B_i=g(w_i), \mbox{ for } i\geq
1
\]
\[
C_i=h(w_i), \mbox{ for }  i\geq 0.
\]
It follows from Lemma~\ref{T1} and Lemma~\ref{T2} that $
\mathbf{T}=\prod_{i=0}^\infty A_i=\prod_{i=0}^\infty
B_i=\prod_{i=0}^\infty C_i $ and it follows from
Lemma~\ref{shuff1}  that $A_i\in \sh(B_i,C_i)$ for each $i\geq 0$.
Hence $\mathbf{T}\in \sh(\mathbf{T},\mathbf{T})$ as required.
Theorem \ref{th_TM} is proved.
\end{proof}

\section{Infinite Lyndon words are not self-shuffling}

\begin{definition}\label{Lyn}
Let $y\in \A^\nats$.
We say $y$ is Lyndon if there exists an order $\leq$ on $\A$
with respect to which $y$ is lexicographically smaller than all its proper suffixes (or tails).
\end{definition}

Recall that if $(\A,\leq )$ is a  linearly ordered set, then
$\leq$ induces the {\it lexicographic order}, denoted $\llex$, on
$\A^+$ and $\A^\nats$ defined as follows: If $u,v \in \A^+$ (or
$\A^\nats)$ we write $u\llex v$ if either $u=v$ or if  $u$ is
lexicographically smaller than $v$.  In the latter case we write
$u\sllex v$. Thus  $y\in \A^\nats $ is Lyndon if and only if there
exists an order $\leq$ on $\A$ with respect to which $y\sllex y'$
for all proper suffixes $y'$ of $y$. Note that the property of
being Lyndon is an intrinsic property of a word in the sense that
if $y$ is Lyndon and $z$ is word isomorphic to $y$, then $z$ is
Lyndon. For instance $10^\omega$ and $01^\omega$, are each Lyndon.
Similarly, any suffix of $x=101001000100001\cdots$ beginning in
$1$ is Lyndon. Note that in this last example, all Lyndon words in
the shift orbit closure of $x$ begin in $1$. Indeed, if $y$ is any
Lyndon word beginning with $0$ then the blocks if $0$'s are
bounded in length. It follows from Definition~\ref{Lyn} that a
Lyndon word is never purely periodic (although it may be
ultimately periodic) as in the first two examples above. We also
note that if $y$ is Lyndon with respect to $\leq$, then every
prefix $u$ of $y$ is {\it minimal} in $y$ with respect to $\leq$,
i.e., $u\llex v$ for all factors $v$ of $y$ with $|v|=|u|$.

\begin{theorem}\label{extremal}
Let $y\in \A^\nats$ be Lyndon relative to some order $\leq$ on $\A$
and let $z\in \A^\nats$ be any point in the shift orbit closure of $y$.
Then for each $w\in \sh(y,z)$,  we have $w \sllex z$.
In particular, taking $z=y$, it follows that $y$ is not self-shuffling.
\end{theorem}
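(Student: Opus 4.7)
The plan is to argue by contradiction. Assume some $w \in \sh(y,z)$ satisfies $w \not\sllex z$, and derive a violation of the Lyndon hypothesis on $y$.

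A key preliminary is that $y$ is lex-minimal in its shift orbit closure: since $y \sllex T^k y$ for every $k \geq 1$, a density/continuity argument (the set $\{z' : y \llex z'\}$ is closed in the shift topology and contains the whole orbit of $y$) gives $y \llex z'$ for every $z'$ in the shift orbit closure of $y$, and hence $y \llex T^m z$ for every $m \geq 0$. This is the tool that converts combinatorial constraints on factors of $z$ into lex inequalities. I then write the shuffle as $w = U_0 V_0 U_1 V_1 \cdots$ with $y = \prod U_i$ and $z = \prod V_i$, and fix the two infinite sets $Q = \{q_0 < q_1 < \cdots\}$ and $P = \{p_0 < p_1 < \cdots\}$ of positions in $w$ drawing their letter from $y$ and from $z$ respectively, so that $w_{q_i} = y_i$ and $w_{p_j} = z_j$.

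The argument splits into the case $w = z$ and the case where there is a smallest position $n$ with $w_n \neq z_n$. If $w = z$, then $z_{q_i} = y_i$ and $z_{p_j} = z_j$ for every $i$ and $j$: this says $y$ embeds as a subsequence of $z$ along $Q$ and $z$ embeds in itself along $P$. Iterating this double embedding on nested portions of $z$ produces shifts agreeing with $z$ on arbitrarily long prefixes, and passing to the limit yields either $T^k y \llex y$ for some $k \geq 1$, or a periodicity $y = T^k y$; either contradicts Lyndon. If instead there is a first differing position $n$ with $w_n > z_n$, let $a = |Q \cap [0,n-1]|$ and $b = |P \cap [0,n-1]|$; the matching $w_0\cdots w_{n-1} = z_0\cdots z_{n-1}$ yields the structural identities $z_{q_i} = y_i$ for $i < a$ and $z_{p_j} = z_j$ for $j < b$.

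I then analyse the two sub-cases according to whether $n\in Q$ or $n\in P$. If $n = q_a$, so $w_n = y_a$, I apply the Lyndon minimality $y \llex T^{q_{a-1}+1} z$ and use that the letters of $z$ in the interval $(q_{a-1}, n)$ coincide with a block $z_{b-r}\cdots z_{b-1}$ drawn from the $z$-prefix, to align initial segments of the relevant shifts and conclude $y_a \le z_n$, ruling out $w_n > z_n$. If $n = p_b$, so $w_n = z_b$ with $b < n$, I compare $T^b z$ with $T^n z$ using that the factor $z_{p_{b-1}+1}\cdots z_{n-1}$ coincides with a tail of the $y$-prefix $y_0\cdots y_{a-1}$; combined with $y\llex T^b z$ and $y \llex T^n z$, this forces $z_b \le z_n$, again contradicting $w_n > z_n$.

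The main obstacle lies in handling the equality cases within these sub-cases: when the top-level letter comparison ties, the inequality must be transferred to deeper positions, producing cascading comparisons between nested shifts of $y$ and of $z$. Closing the argument requires iteratively peeling matched initial segments and extracting, via a compactness (K\"onig-type) argument, either a shift $T^k y$ of $y$ with $T^k y \llex y$ for some $k \geq 1$, or a forced periodicity $y = T^k y$; both outcomes flatly contradict the Lyndon hypothesis. This iterative position-chasing through nested shifts of $y$ and $z$ is what makes the proof long and delicate.
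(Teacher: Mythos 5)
Your overall strategy --- locate the first position where $w$ and $z$ differ (or treat $w=z$ separately) and derive a contradiction with the Lyndon property by chasing positions through the shuffle --- is genuinely different from the paper's, which reduces everything to a finite statement (Lemma~\ref{propmin}: a \emph{proper} shuffle $s$ of a minimal factor $u$ with a word $v$ none of whose suffixes is minimal satisfies $s\sllex v$), proves it by induction on $|u|+|v|$, and then localizes the strict drop inside an unbordered prefix of $y$ longer than $U_0$. But your proposal has a genuine gap exactly where the difficulty lives. The shuffle hands you \emph{subsequence} matchings ($z_{q_i}=y_i$, $z_{p_j}=z_j$ on the matched prefix), whereas lexicographic comparison requires \emph{contiguous} prefix alignments; the inequalities you assert in the sub-cases ($y_a\le z_n$ when $n=q_a$, and $z_b\le z_n$ when $n=p_b$) do not follow from the stated facts without first converting the subsequence information into factor information, and no mechanism for that conversion is given. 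You acknowledge the remaining work yourself: the ``cascading comparisons'' and the ``compactness (K\"onig-type) argument'' that are supposed to close both the $w=z$ case and the tied sub-cases are precisely the content of the theorem, and they are named rather than carried out.

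The paper's proof also shows what structural input is needed to make such a cascade terminate: Lemma~\ref{lup} (a minimal factor is periodic with period its longest unbordered prefix) and Lemma~\ref{period} (for $u$ minimal, either $uv\sllex v$ or $v$ is minimal) are what let the induction advance when the top-level comparison ties, and the class $\mathcal{C}(x)$ of factors with no minimal suffix is what guarantees the final inequality is \emph{strict}. Nothing in your outline plays these roles, so there is no reason your iterative peeling ends in a strict inequality rather than running forever through equalities; without an analogue of the unbordered-prefix/periodicity dichotomy, the limit object you extract could just as well satisfy $y\llex T^k y$, which is consistent with Lyndon and yields no contradiction. As written, the proposal is a plan whose decisive steps are missing.
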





In order to prove Theorem~\ref{extremal} we will make use of the following  four lemmas.
In each of the following lemmas, assume $(\A,\leq )$ is a  linearly ordered set and $x\in \A^\nats$.

\begin{lemma}\label{prodmin}
Let $u=u_1u_2\cdots u_n$ be a factor of $x$ with each $u_i$ minimal in $x$.
Then $u$ is minimal in $x$.
\end{lemma}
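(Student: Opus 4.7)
The plan is to proceed by contradiction. Suppose there is some factor $w$ of $x$ with $|w|=|u|$ and $w\sllex u$. The idea is to exploit the natural block decomposition of $w$ induced by the block decomposition $u=u_1u_2\cdots u_n$, and to locate the first block in which the two factorizations differ; that block will witness a violation of the minimality of the corresponding $u_k$.

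More precisely, I would write $w=w_1w_2\cdots w_n$ with $|w_i|=|u_i|$ for each $i$, which is possible since $|w|=|u|=\sum|u_i|$. Let $k$ be the least index with $w_k\ne u_k$ (such a $k$ exists because $w\ne u$). Then $w$ and $u$ agree on their common prefix $u_1\cdots u_{k-1}$, so the comparison $w\sllex u$ reduces to $w_kw_{k+1}\cdots w_n\sllex u_ku_{k+1}\cdots u_n$. Since $|w_k|=|u_k|$ and $w_k\ne u_k$, the first position of disagreement lies inside this $k$-th block, which forces $w_k\sllex u_k$.

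Finally, because $w$ is a factor of $x$, every contiguous subword of $w$ is also a factor of $x$; in particular $w_k$ is a factor of $x$ of length $|u_k|$. But then $w_k\sllex u_k$ contradicts the hypothesis that $u_k$ is minimal in $x$ among factors of length $|u_k|$. Hence no such $w$ exists, and $u$ is minimal in $x$.

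There is no real obstacle here beyond being careful about the definitions: one must remember that a strict lex inequality between equal-length words is detected at the first position of disagreement, and that minimality is relative to all factors of the given length (not merely those of a particular form), so that cutting $w$ at the prescribed boundaries still yields a legitimate factor to compare against $u_k$.
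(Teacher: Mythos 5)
Your proof is correct and in substance the same as the paper's: the paper disposes of the lemma by induction on the number of blocks, showing each prefix $u_1\cdots u_k$ is minimal, which is just the inductive packaging of your ``first differing block'' contradiction. Both arguments rest on cutting the competing factor $w$ at the boundaries prescribed by $u=u_1\cdots u_n$ and invoking the minimality of the single block where the first disagreement occurs.
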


\begin{proof}
It is readily verified by induction on $k$ that each prefix $u_1u_2\cdots u_k$ is minimal in $x$.
\end{proof}

\begin{lemma}\label{lup}
Let $u$ be a minimal factor of $x$ and let $v$ be the longest unbordered prefix of $u$.
Then $v$ is a period of $u$, i.e., $u$ is a prefix of $v^n$ for some positive integer $n$.
\end{lemma}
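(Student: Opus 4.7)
The plan is to argue by induction on $n=|u|$, after noting the easy auxiliary fact that every prefix of a minimal factor of $x$ is itself minimal of its length (one-line shift-and-extend argument: if $\pref_k(u)$ is beaten by some length-$k$ factor $q$ of $x$, extend $q$ to length $n$ to beat $u$). The base case $n=m$, where $m:=|v|$, is trivial. For the inductive step I would apply the induction hypothesis to $\pref_{n-1}(u)$, whose longest unbordered prefix is still $v$, and conclude that $v$ is a period of $\pref_{n-1}(u)$. Writing $n-1=qm+r$ with $q\geq 1$ and $0\leq r<m$, this says $\pref_{n-1}(u)=v^q\,\pref_{r}(v)$, so the inductive step reduces to proving $u_{n-1}=v_r$.

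Assume for contradiction $u_{n-1}\neq v_r$. The easy case is $u_{n-1}<v_r$: the length-$n$ factor of $x$ starting $qm$ positions to the right of $u$'s occurrence begins with $v_0\cdots v_{r-1}u_{n-1}$, lex-smaller than the prefix $v_0\cdots v_r$ of $u$; this contradicts the minimality of $u$.

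The substantial case is $u_{n-1}>v_r$. Here the strategy is to prove that $u$ itself is unbordered, which directly contradicts that $v$ is the \emph{longest} unbordered prefix of $u$ since $n>m$. A hypothetical border of $u$ of length $\ell\in[1,n-1]$ equals $v^\omega[0..\ell-1]$ viewed as a prefix and, viewed as a suffix, ends in $u_{n-1}$; this yields
\[
v^\omega[\ell-1]=u_{n-1}>v_r=v^\omega[n-1],\qquad v^\omega[j]=v^\omega[n-\ell+j]\text{ for }0\leq j\leq \ell-2.
\]
I would then rule out $\ell$ by a four-way split. (a) If $\ell\leq r$, the second relation forces the factor $v[r-\ell+1..r]$ to agree with the prefix $\pref_{\ell}(v)$ on the first $\ell-1$ letters and differ on the last letter with $v_r<v_{\ell-1}=u_{n-1}$, contradicting that $\pref_{\ell}(u)=\pref_\ell(v)$ is a minimal length-$\ell$ factor of $x$. (b) If $\ell=r+1$, the first relation reads $v_r>v_r$. (c) If $r+1<\ell\leq m$, the second relation gives a nontrivial border of $v$ of length $\ell-r-1$, contradicting that $v$ is unbordered. (d) If $\ell>m$, the index range $[0,\ell-2]$ covers every residue modulo $m$, so the second relation forces $v$ to coincide with its cyclic shift by $s:=(n-\ell)\bmod m$; since $v$ is unbordered (hence primitive) this forces $s=0$, but then $(\ell-1)\bmod m=r$ makes the first relation read $v_r>v_r$.

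The main obstacle is the second case: the key is to dovetail the two available weapons---minimality of $u$ and of all its prefixes (which kills short borders via a direct lex comparison) and unborderedness of $v$ (which kills long borders via period and cyclic-shift arguments)---so that no $\ell$ in $[1,n-1]$ survives. Sub-case (d) is the most delicate step, since it is the only place where the length of the border forces global rigidity on $v$ as a whole rather than on a single prefix of $v$.
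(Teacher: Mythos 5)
Your proof is correct, but it takes a genuinely different route from the paper's. The paper argues directly on $u=vu'$: it peels borders off the right end of $u$ to write $u'=v^{-1}u$ as a product of prefixes of $u$, invokes Lemma~\ref{prodmin} to conclude that $u'$ is itself a minimal factor and hence a prefix of $u$, and then reads off the period $|v|$ either because $u'$ is a prefix of $v$ or because the prefix and suffix occurrences of $u'$ in $u$ overlap. You instead extend the period one letter at a time by induction on $|u|$: lex-minimality of $u$ rules out the new letter being too small, and when it is too large you show $u$ would be unbordered, contradicting the maximality of $v$; the case analysis on the border length $\ell$ uses minimality of $\pref_{\ell}(u)$ for $\ell\le r$, unborderedness of $v$ for $r+1<\ell\le m$, and primitivity of $v$ (an unbordered word equal to a nontrivial cyclic shift of itself would be a proper power, hence bordered) for $\ell>m$. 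All four sub-cases check out, as do the auxiliary facts (prefixes of minimal factors are minimal; the longest unbordered prefix of $\pref_{n-1}(u)$ is still $v$). What the paper's route buys is brevity and reuse of Lemma~\ref{prodmin}; what yours buys is explicitness — every step is a concrete index computation — and it sidesteps the one delicate point of the paper's argument, namely the claim that the successive borders $v_1,v_2,\ldots$ tile $u'$ exactly without overlapping $v$.
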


\begin{proof}
The result of the lemma is clear in case $v=u$.
So suppose $|v|<|u|$ and let us write $u=vu'$.
Since every prefix of $u$ of length longer than $|v|$ is bordered, and since $v$ itself is unbordered,
it follows that $u'$ is a product $v_kv_{k-1}\cdots v_1$ where each $v_i$ is a prefix of $u$.
In fact, since $u$ is bordered, there exists a border $v_1$ of $u$.
Moreover since $v$ is unbordered, the suffix $v_1$ of $u$ does not overlap $v$,
and hence $v_1$ is a suffix of $u'$.
If $v_1=u'$ we are done, otherwise the prefix $uv_1^{-1}$ of $u$ admits a border $v_2$.
Again since $v$ is unbordered, the suffix $v_2$ of $uv_1^{-1}$ does not overlap $v$ and hence $v_2v_1$ is a suffix of $u'$.
(See Figure~\ref{del}).
\begin{figure}[ht]
  \begin{center}
\begin{picture}(90, 20)(0,10)
\gasset{Nw=.3,Nh=2,Nmr=0,fillgray=0} 
\gasset{ExtNL=y,NLdist=1,NLangle=-90} 

   \node(X0)(0,15){$$}
   \node(X1)(40,15){$$}

   \node(X4)(90,15){$$}

  \gasset{Nw=0,Nh=0,Nmr=0,fillgray=0}

   \node(X11)(7,15){$$}
    \node(X12)(33,15){$$}
    \node(X13)(10,15){$$}
   \node(X2)(50,15){$$}
  \node(X3)(83,15){$$}

  \node(x)(-10,17){$u=$}

 \drawedge[AHnb=0,ELside=r,  linewidth=0.5](X0,X1){$v$}
   \drawedge[AHnb=0,ELside=r](X1,X4){$u'$}
   \drawedge[AHnb=0,ELside=l](X2,X3){}
    \drawedge[AHnb=0,ELside=l](X3,X4){}

\gasset{curvedepth=5}

          \drawedge[AHnb=0,ELside=l](X2,X3){$v_2$}

          \gasset{curvedepth=5}
          \drawedge[AHnb=0,ELside=l](X0,X12){$v_2$}

          \gasset{curvedepth=3}
          \drawedge[AHnb=0,ELside=l](X1,X2){$v_3$}

          \gasset{curvedepth=3}

\drawedge[AHnb=0,ELside=l](X3,X4){$v_1$}

\gasset{curvedepth=-3}
\drawedge[AHnb=0,ELside=r](X0,X11){$v_1$}
\end{picture}
  \end{center}
  \caption{$u'$ as a product of prefixes of $u$.}
  \label{del}
\end{figure}
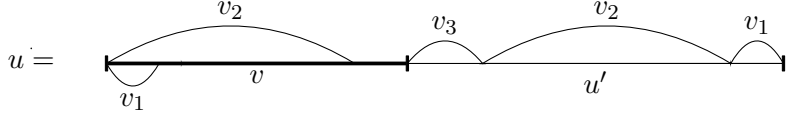
Continuing in this way we can write $u'$ as a product of prefixes of $u$
whence $u'$ is a product of minimal factors of $x$. It follows from Lemma~\ref{prodmin}
that $u'$ is a minimal factor of $x$ and hence $u'$ is  both a prefix and a suffix of $u$.
If $|u'|\leq |v|$ then $u'$ is a prefix of $v$ and hence $u$ is a prefix of $v^2$.
Otherwise, if $|u'|>|v|$ then the occurrences of $u'$ at the beginning and end of $u$ overlap;
whence $v$ is a period of $u$ (see Figure~\ref{del2}).
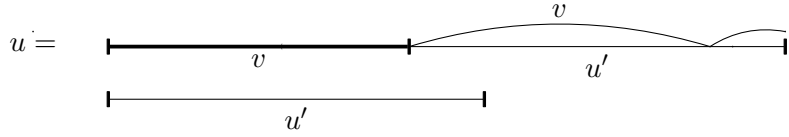
\begin{figure}[ht]
  \begin{center}
\begin{picture}(85,20)(0,5)
\gasset{Nw=.3,Nh=2,Nmr=0,fillgray=0} 
\gasset{ExtNL=y,NLdist=1,NLangle=-90} 

   \node(X0)(0,15){$$}
   \node(X1)(40,15){$$}

   \node(X4)(90,15){$$}

   \node(Y0)(0, 8){$$}
   \node(Y1)(50,8){$$}

  \gasset{Nw=0,Nh=0,Nmr=0,fillgray=0}

   \node(X11)(7,15){$$}
    \node(X12)(23,15){$$}
   \node(X2)(80,15){$$}
  \node(X3)(83,15){$$}

  \node(x)(-10,17){$u=$}

    \node(X3)(90,17){$$}

  \drawedge[AHnb=0,ELside=r,  linewidth=0.5](X0,X1){$v$}
   \drawedge[AHnb=0,ELside=r](X1,X4){$u'$}

    \drawedge[AHnb=0,ELside=r](Y0,Y1){$u'$}

\gasset{curvedepth=3}

          \drawedge[AHnb=0,ELside=l](X1,X2){$v$}
          \gasset{curvedepth=1}
          \drawedge[AHnb=0,ELside=l](X2,X3){}
\end{picture}
  \end{center}
  \caption{$u$ has period $v$.}
  \label{del2}
\end{figure}
\end{proof}

\begin{lemma}\label{period}
Let $u$ and $v$ be factors of $x$ with $u$ minimal.
Then either $uv \sllex v$ or else $v$ is minimal.
\end{lemma}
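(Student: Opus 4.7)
The plan is to prove the lemma by induction on $|v|$, splitting $v$ into a prefix of length $|u|$ and the remaining suffix. A preliminary observation I would record first is that every prefix of a minimal factor is itself minimal in $x$: any factor $w$ of $x$ of length $k \le |u|$ extends (since $x$ is infinite) to a factor of length exactly $|u|$, and minimality of $u$ forces the length-$k$ prefix of $u$ to be $\llex w$.

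For the induction step I would distinguish two cases. If $|u| \ge |v|$, let $u'$ be the prefix of $u$ of length $|v|$; by the preliminary observation $u'$ is minimal, so $u' \llex v$. If $u' = v$, then $v$ is a prefix of $u$ and therefore itself minimal. Otherwise $u' \sllex v$, and since $u$ begins with $u'$, the first letter at which $uv$ and $v$ differ is smaller in $uv$, giving $uv \sllex v$. If $|u| < |v|$, write $v = v_1 v_2$ with $|v_1| = |u|$. Minimality of $u$ gives $u \llex v_1$; if this inequality is strict then $uv \sllex v$ at once, so we may assume $v_1 = u$. Then $v = u v_2$ and $uv = u \cdot u v_2$, so after $|u|$ common letters the comparison of $uv$ and $v$ reduces to the comparison of $u v_2$ to $v_2$.

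This reduced comparison is precisely the statement of the lemma applied to the pair $(u, v_2)$, and since $|v_2| = |v| - |u| < |v|$, the induction hypothesis yields a dichotomy: either $u v_2 \sllex v_2$ (whence $uv \sllex v$) or $v_2$ is minimal. In the latter case $v = u v_2$ is a concatenation of two minimal factors, so Lemma~\ref{prodmin} immediately gives that $v$ is minimal, closing the argument. The step I expect to be the most delicate is this recursive reduction in the case $v_1 = u$: one must recognize that the tails $(u v_2, v_2)$ again fall under the hypothesis of the lemma, that the strict decrease $|v_2| < |v|$ legitimizes the induction, and that Lemma~\ref{prodmin} is exactly what rebuilds minimality of $v$ from the minimality of $u$ and $v_2$. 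The remaining cases are straightforward lexicographic bookkeeping.
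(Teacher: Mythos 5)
Your proof is correct and follows essentially the same route as the paper's: both arguments force $v$ to be a concatenation of copies of $u$ together with a prefix of $u$ and then invoke Lemma~\ref{prodmin} (plus the observation that prefixes of minimal factors are minimal). The only cosmetic difference is that the paper packages your recursion $v=uv_2$, $v_2=uv_4,\ldots$ as the single statement that the prefix occurrence and suffix occurrence of $v$ in $uv$ overlap, so that $u$ is a period of $v$.
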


\begin{proof}
Suppose $\neg (uv\sllex v)$. Since $u$ is minimal, $v$ is a prefix of $uv$.
If $|v|\leq |u|$ then $v$ is a prefix of $u$ and hence $v$ is minimal.
If $|v|>|u|$ then the prefix $v$ of $uv$ overlaps the suffix $v$ of $uv$
whence $u$ is a period of $uv$  and hence $u$ is also a period of $v$.
Thus by Lemma~\ref{prodmin} we deduce that $v$ is minimal.
\end{proof}

Given two finite non-empty words $u$ and $v$ we say that $s\in \sh(u,v)$
is a {\it proper shuffle} of $u$ and $v$ if there exists a positive integer $k$ and factorings
\[
u=\prod _{i=0}^k  U_i\quad \mbox{ and } \quad v=\prod _{i=0}^k  V_i
\]
and  either \[s=\prod _{i=0}^k  U_iV_i\] with each of $U_0,V_0$ and $U_1$ non-empty, or
\[
s=\prod _{i=0}^k  V_iU_i
\]
with each of $V_0,U_0$ and $V_1$ non-empty.
In other words, $s$ is a proper shuffle of $u$ and $v$ means that $s$ is obtained as a non-trivial shuffle of $u$ and $v$.
We let $\sh^*(u,v)$ denote the set of all proper shuffles of $u$ and $v$.

Let ${\mathcal{C}} (x)$ denote the set of all factors $v$ of $x$
with the property that no suffix of $v$ (including $v$ itself) is minimal in $x$.

\begin{lemma}\label{propmin}
Let $u$ and $v$ be factors of $x$ with $u$ minimal in $x$ and $v\in {\mathcal{C}}(x)$.
Let $s\in \sh^*(u,v)$. Then $s\sllex v$.
\end{lemma}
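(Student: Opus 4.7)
My central tool will be an \emph{Auxiliary Claim} derived from Lemma~\ref{period}: for any minimal factor $a$ of $x$, any suffix $b$ of $v$, and any word $c$, one has $abc\sllex b$, with the strict inequality witnessed at a position $p<|b|$. Indeed $b$ is a suffix of $v\in\mathcal{C}(x)$, hence not minimal, so Lemma~\ref{period} yields $ab\sllex b$; the witness must be at some $p<|b|$, else $b$ would be a prefix of $ab$ and we would get $b\llex ab$, contradicting $ab\sllex b$; and appending $c$ preserves all positions strictly less than $|ab|$. I will also record that a prefix of a minimal factor is itself minimal: any length-$|a'|$ factor $w$ of $x$ extends (since $x$ is infinite) to a length-$|a|$ factor $ww''$ of $x$, and minimality of $a$ gives $a\llex ww''$, whose first $|a'|$ letters yield $a'\llex w$. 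With these tools I will proceed by strong induction on $|u|+|v|$, splitting on the two shapes of proper shuffle.

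For \emph{Case 2} ($s=V_0U_0V_1U_1\cdots V_kU_k$ with $V_0,U_0,V_1$ nonempty), $s$ and $v$ already share the prefix $V_0$, so it suffices to show $R_s:=U_0V_1U_1\cdots V_kU_k \sllex R_v:=V_1\cdots V_k$. Applying the Auxiliary Claim with $a=U_0$ and $b=R_v$ gives $U_0R_v\sllex R_v$ at some $p<|R_v|$. Since $R_s$ and $U_0R_v$ both begin with $U_0V_1$, whenever $p<|U_0|+|V_1|$ the disagreement transfers immediately and $R_s\sllex R_v$. Otherwise, after stripping the leading $U_0$, the tail $V_1U_1\cdots V_kU_k$ is either equal to $R_v$ (in which case $R_s=U_0R_v$ and the Auxiliary Claim finishes) or, after consolidating any empty blocks, is a proper shuffle to which the inductive hypothesis applies, yielding $V_1U_1\cdots V_kU_k\sllex R_v$; prepending $U_0$ and chaining through $U_0R_v$ via the Auxiliary Claim then gives $R_s\sllex R_v$ and hence $s\sllex v$.

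For \emph{Case 1} ($s=U_0V_0U_1V_1\cdots U_kV_k$ with $U_0,V_0,U_1$ nonempty), the prefix $U_0$ of $s$ is minimal (prefix of the minimal $u$), so comparing $U_0$ to the length-$|U_0|$ prefix of $v$ settles the lemma unless $U_0$ is itself a prefix of $v$. In that subcase, $s$ and the reference word $U_0v$ both begin with $U_0V_0$, and the Auxiliary Claim supplies $U_0v\sllex v$ at some $p<|v|$; if $p<|U_0|+|V_0|$ the disagreement transfers to $s\sllex v$. Otherwise $v$ has period $|U_0|$ on a long initial range, forcing a long initial segment of $v$ to coincide with a prefix of $U_0^\infty$ (and hence to be minimal by Lemma~\ref{prodmin}); an inductive appeal to a shorter sub-shuffle extracted from the tail of $s$, together with the non-minimality of every suffix of $v$ guaranteed by $v\in\mathcal{C}(x)$, closes the case.

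\textbf{Main obstacle.} The subtle point in both cases is the ``periodic overhang'' subcase, where the Auxiliary Claim's witness position lies strictly beyond the prefix $s$ shares with its reference word. The induced period in $v$ meshes delicately with the minimality of $u$, and the induction has to be set up so that the residual sub-problem is again an instance of this very lemma — i.e., a proper shuffle of a \emph{minimal} first word with a second word in $\mathcal{C}(x)$ — even though suffixes of $u$ are not automatically minimal. Re-bracketing the tail of $s$ around a well-chosen minimal prefix, and propagating the period into the later $V_i$-blocks until it is broken by a non-minimal suffix of $v$, is where the argument becomes intricate.
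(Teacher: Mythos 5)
Your skeleton — strong induction on $|u|+|v|$, a case split on whether $u$ or $v$ dishes out the first block, and Lemma~\ref{period} as the engine (your Auxiliary Claim is a sound packaging of it) — matches the paper's proof. But there are two genuine gaps. First, in your Case~2 ($v$ leads), after stripping $U_0$ you apply the inductive hypothesis to the shuffle of $U_1\cdots U_k$ with $R_v$. The lemma's hypotheses require the first word of the shuffle to be \emph{minimal}, and $U_1\cdots U_k$ is a suffix of $u$, not a prefix; suffixes of minimal factors are not minimal in general (e.g.\ $u=010$ minimal in a Sturmian word, but $10$ is not), so the IH simply does not apply to that pair. This one is repairable: do not strip $U_0$ at all — the word $R_s=U_0V_1U_1\cdots V_kU_k$ is already either $uR_v$ (handled by Lemma~\ref{period}) or a proper shuffle of the intact minimal $u$ with $R_v\in\mathcal{C}(x)$, and $|u|+|R_v|<|u|+|v|$, which is exactly how the paper disposes of this case in two lines.

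The second gap is not repairable by a one-line change: in your Case~1 ($u$ leads), the ``periodic overhang'' subcase — where the disagreement between $U_0v$ and $v$ occurs at a position $p\ge |U_0|+|V_0|$ — is only described, not proved. You correctly observe that a long prefix of $v$ is then a prefix of $U_0^\infty$ and hence minimal, but ``an inductive appeal to a shorter sub-shuffle extracted from the tail of $s$'' names neither the sub-shuffle nor why its first component is minimal, its second lies in $\mathcal{C}(x)$, and its total length drops. This subcase is the bulk of the paper's proof: it introduces the shortest non-minimal prefix $Ub$ of $v$ (and checks $Ub\in\mathcal{C}(x)$ via Lemma~\ref{prodmin}), splits on whether $U$ is a proper prefix of $u$ or $u$ is a prefix of $U$, and in the latter case brings in the longest unbordered prefix $r$ of $u$ together with Lemma~\ref{lup} to manufacture, in each of two further subcases, a strictly shorter minimal/$\mathcal{C}(x)$ pair whose shuffle has the relevant prefix of $s$ as a prefix. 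None of that machinery appears in your proposal, and your own closing paragraph concedes that this is ``where the argument becomes intricate'' — which is precisely the part a proof of this lemma must supply.
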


\begin{proof} 
Let $0$ denote the minimal element of $\A$ with respect to the linear order $\leq$.
We proceed by induction on $|u|+|v|$. Since $s$ is assumed to be a proper shuffle of $u$ and $v$,
it follows that  $|u|+|v|\geq 3$.
For the base case $|u|+|v|=3$ we have either $u=0a$ and $v=b$ with $b\neq 0$ or $u=0$ and $v=cd$ with $d\neq 0$.
In the first case $s=0ba\sllex b$ and in the second case $s=c0d \sllex cd$.

Let $N\geq 4$. By induction hypothesis we suppose the result of the proposition is true whenever $|u|+|v|<N$.
Now suppose $u$ and $v$ are factors of $x$ with $u$ minimal in $x$,  $v\in \mathcal{C}(x)$ and $|u|+|v|=N$.
Let
\[
u=\prod _{i=0}^k  U_i\quad \mbox{ and } \quad v=\prod _{i=0}^k V_i
\]
be factorings of $u$ and $v$ with each $U_i$ and $V_i$ non-empty except for possibly  $U_k$ or $V_k$. \\

\noindent {\bf Case 1.} We consider first  the case in which $v$
dishes out the initial segment $V_0$ of $s$, i.e., $ s=\prod
_{i=0}^k  V_iU_i. $ Set $v'=V_1\cdots V_k$ so that $v=V_0v'$ (see
Figure~\ref{del3}).
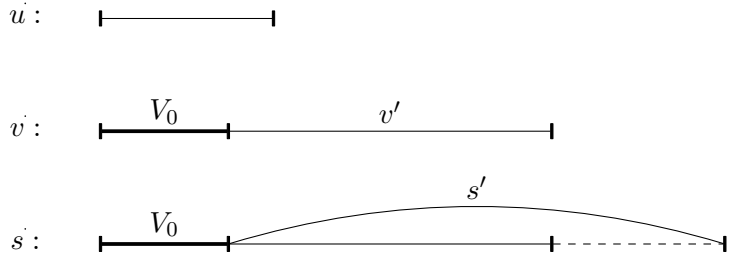
\begin{figure}[htbp]
  \begin{center}
\begin{picture}(90,40)(0,8)
\gasset{Nw=.3,Nh=2,Nmr=0,fillgray=0} 
\gasset{ExtNL=y,NLdist=1,NLangle=-90} 

 \node(Y0)(0,40){$$}
 \node(Y1)(23,40){$$}
   \node(W0)(0,10){$$}
   \node(W1)(17,10){$$}
   \node(W2)(60,10){$$}
   \node(W3)(83,10){$$}
\node(Z0)(0,  25){$$}
\node(Z1)(17,25){$$}
\node(Z2)(60,25){$$}

  \gasset{Nw=0,Nh=0,Nmr=0,fillgray=0}

  \node(w)(-10,12){$s:$}
  \node(y)(-10,42){$u:$}
  \node(z)(-10,27){$v:$}

  \drawedge[AHnb=0,ELside=l,linewidth=0.5](W0,W1){$V_0$}
   \drawedge[AHnb=0,ELside=l](W1,W2){}
   \drawedge[AHnb=0,ELside=l,dash={1.}](W2,W3){}
  \drawedge[AHnb=0,ELside=l,linewidth=0.5](Z0,Z1){$V_0$}
  \drawedge[AHnb=0,ELside=l](Z1,Z2){$v'$}
  \drawedge[AHnb=0,ELside=l](Y0,Y1){}

  \gasset{curvedepth=5}

          \drawedge[AHnb=0,ELside=l](W1,W3){$s'$}
\end{picture}
  \end{center}
  \caption{Case 1: $v$ dishes out the  initial segment of $s$.}
  \label{del3}
\end{figure}
Since $s$ is assumed to be a proper shuffle of $u$ and $v$, it
follows that $v'$ is non-empty and $v' \in \mathcal{C}(x)$.
Let us write $s=V_0s'$.
We have two possibilities: either $s'=uv'$ or $s'\in \sh^* (u,v')$.
 If $s'=uv'$, then, because $v'$ is not minimal, we have $s'=uv'\sllex v'$ by Lemma~\ref{period}.
 If $s'\in \sh^* (u,v')$, then by induction hypothesis  $s'\sllex v'$.
 Then, in both cases, we have $s=V_0s'\sllex V_0v'=v$. \\

\noindent{\bf Case 2.} We may now suppose that $u$ dishes out the
initial segment $U_0$ of $s$, i.e., $ s=\prod _{i=0}^k  U_iV_i. $
Let $Ub$ denote the shortest non-minimal prefix of $v$ with $U\in
\mathbb{A}^*$ and $b\in  \mathbb{A}$. Then $Ub$ belongs to
$\mathcal{C}(x)$ for otherwise $Ub$ would be the concatenation of
two minimal factors, and hence itself minimal by
Lemma~\ref{prodmin}. Since $U_0$ is a prefix of $s$ and $U_0$ is
minimal, we deduce that either $U_0$ is a prefix of $v$ (and hence
of $U)$ or $s\sllex v$. Thus we may suppose that $U_0$ is a prefix
of $U$ so that $|U|\geq |U_0|$.
We consider two sub-cases according to the length of $u$. \\

\noindent{\bf Case 2.1.}
Suppose $|u|>|U|$, i.e., $U$ is a proper prefix of $u$.
In this case let $s'$ denote the prefix of $s$ of length $|Ub|$ (see Figure~\ref{del 6}).
\begin{figure}[htbp]
  \begin{center}
\begin{picture}(90,45)(0,5)
\gasset{Nw=.3,Nh=2,Nmr=0,fillgray=0} 
\gasset{ExtNL=y,NLdist=1,NLangle=-90} 

 \node(Y0)(0, 40){$$}
 \node(Y1)(20, 40){$$}
 \node(Y2)(60,40){$$}
   \node(W0)(0,10){$$}
   \node(W1)(20,10){$$}
   \node(W12)(28,10){$$}
   \node(W2)(60,10){$$}
\node(Z12)(8,25){$$}
\node(Z0)(0,25){$$}
\node(Z2)(60,25){$$}

\gasset{Nw=1,Nh=1,Nmr=1,fillgray=0} 

\node(Z3)(62,25){$b$}
\node(W3)(62,10){$$}

  \gasset{Nw=0,Nh=0,Nmr=0,fillgray=0}
  \node(Z1)(20,25){$$}
  \node(w)(-10,12){$s:$}
  \node(y)(-10,42){$u:$}
  \node(z)(-10,27){$v:$}

   \node(W4)(100,10){$$}
  \node(Y3)(74,40){$$}

  \drawedge[AHnb=0,ELside=r, linewidth=0.5](W0,W1){$U_0$}
  \drawedge[AHnb=0,ELside=r, linewidth=0.5](W1,W12){$V_0$}
  \drawedge[AHnb=0,ELside=r, linewidth=0.5](Z0,Z12){$V_0$}
   \drawedge[AHnb=0,ELside=l](W1,W2){}
   \drawedge[AHnb=0,ELside=l,dash={1.}](W3,W4){}
   \drawedge[AHnb=0,ELside=l,dash={1.}](Y2,Y3){}

    \node(Z4)(74,25){$$}
    \drawedge[AHnb=0,ELside=l,dash={1.}](Z3,Z4){}

  \drawedge[AHnb=0,ELside=r](Z0,Z1){}
  \drawedge[AHnb=0,ELside=r, linewidth=0.5](Y0,Y1){$U_0$}
  \drawedge[AHnb=0,ELside=l](Y1,Y2){}
  \drawedge[AHnb=0,ELside=l](Y0,Y2){$U$}
   \drawedge[AHnb=0,ELside=l](Z0,Z2){$U$}

 \drawedge[AHnb=0,ELside=l](Z0,Z2){}

\drawedge[AHnb=0,ELside=r](W1,W12){}

          \drawedge[AHnb=0,ELside=l](W1,W2){}

\gasset{curvedepth=5}
\drawedge[AHnb=0,ELside=l](W0,W3){$s'$}
\end{picture}
  \end{center}
  \caption{Case 2.1: $|u|>|U|$.}
  \label{del 6}
\end{figure}
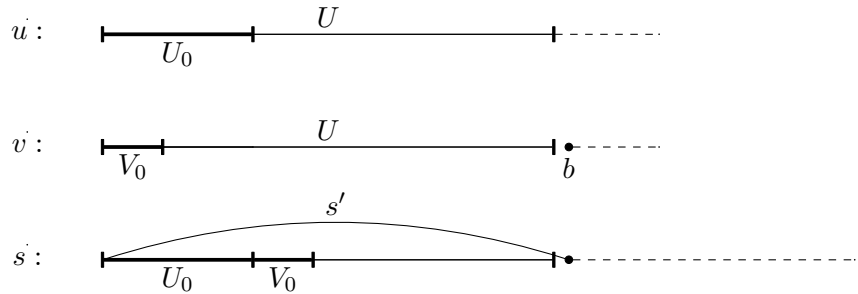
Then $s'$ is the prefix of a proper shuffle $z$ of  $U$  and $Ub$.
Remark that here we mean a prefix of some proper shuffle of $U$ and $Ub$, not
necessarily the same shuffle we have in $s$.
We have $|U|+|Ub|<|u|+|v|$, $U$ is minimal and $Ub\in\mathcal{C}(x)$.
Hence by induction hypothesis $z\sllex Ub$, and hence $s'\sllex Ub$ as $s'$ is the prefix of length $|Ub|$ of $z$.
Thus $s\sllex v$.\\

\noindent{\bf Case 2.2.}
Suppose $|u|\leq |U|$, i.e, $u$ is a prefix of $U$.
Let $r$ denote the longest unbordered prefix of $u$.
Suppose first that  $|r|>|U_0|$, then we can write $r=U_0r'$.
Let $s'$ be such that $|s'|=|r'|$ and
$U_0s'$ is a prefix of $s$ (see Figure~\ref{del 4}).
  \begin{figure}[htbp]
  \begin{center}
\begin{picture}(90,45)(0,5)
\gasset{Nw=.3,Nh=2,Nmr=0,fillgray=0} 
\gasset{ExtNL=y,NLdist=1,NLangle=-90} 

 \node(Y0)(0,40){$$}
  \node(Y1)(20,40){$$}
 \node(Y2)(38,40){$$}
   \node(W0)(0,10){$$}
   \node(W1)(20,10){$$}
   \node(W2)(62,10){$$}
\node(Z0)(0,  25){$$}
\node(Z1)(20,25){$$}
\node(Z2)(60,25){$$}
 \node(Y12)(30,  40){$$}
    \node(Z12)(30,  25){$$}
    \node(W12)(30,  10){$$}

\gasset{Nw=1,Nh=1,Nmr=1,fillgray=0} 

\node(Z3)(62,25){$b$}

  \gasset{Nw=0,Nh=0,Nmr=0,fillgray=0}

  \node(w)(-10,12){$s:$}
  \node(y)(-10,42){$u:$}
  \node(z)(-10,27){$v:$}
\node(W3)(100,10){$$}

  \drawedge[AHnb=0,ELside=r, linewidth=0.5](W0,W1){$U_0$}
   \drawedge[AHnb=0,ELside=l](W1,W2){}
   \drawedge[AHnb=0,ELside=l,dash={1.}](W2,W3){}
  \drawedge[AHnb=0,ELside=r](Z0,Z1){$U_0$}
  \drawedge[AHnb=0,ELside=r, linewidth=0.5](Y0,Y1){$U_0$}
  \drawedge[AHnb=0,ELside=l](Y1,Y2){}
  \drawedge[AHnb=0,ELside=r](Y1,Y12){$r'$}

 \drawedge[AHnb=0,ELside=l](Z0,Z2){}
 \drawedge[AHnb=0,ELside=r](Z1,Z12){$r'$}

\drawedge[AHnb=0,ELside=r](W1,W12){$s'$}

          \drawedge[AHnb=0,ELside=l](W1,W2){}

          \node(Z4)(74,25){$$}
    \drawedge[AHnb=0,ELside=l,dash={1.}](Z3,Z4){}

\gasset{curvedepth=5}

          \drawedge[AHnb=0,ELside=l](Y0,Y12){$r$}
           \drawedge[AHnb=0,ELside=l](Z0,Z12){$r$}
\end{picture}
\end{center}
  \caption{Case 2.2: $u$ is a prefix of $U$ and $|r|>|U_0|$.}
  \label{del 4}
\end{figure}
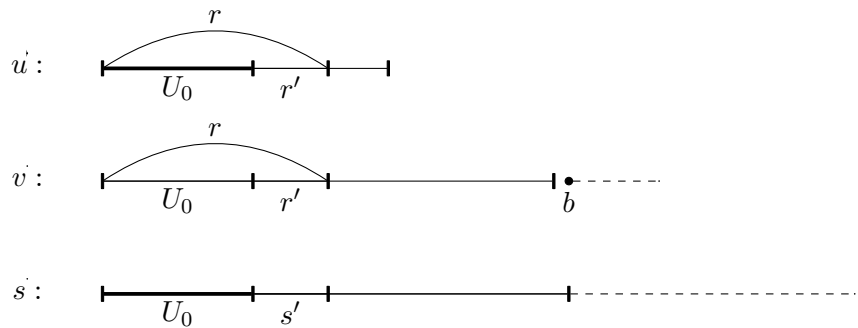
Then $s'$ is 
a prefix of a proper shuffle $z$ of $r$ (which is minimal)
and $r'$ (which is in ${\mathcal{C}}(x)$ since $r$ is unbordered).
By induction hypothesis $z\sllex r'$. This implies $s'\sllex r'$ as $s'$ is the prefix of length $|r'|$ of $z$.
Thus $s\sllex v$.

Thus we can assume that $|r|\leq |U_0|$.
In this case let $v'$ be such that $v=rv'$ and $u'$  such that $u=ru'$.
By Lemma~\ref{lup},  $r$ is a period of $u$ and hence $u'$ is also a prefix of $u$ and hence minimal.
Let $s'$ be such that $rs'$ is a prefix of $s$ of length $|Ub|$ (see Figure~\ref{del 5}).
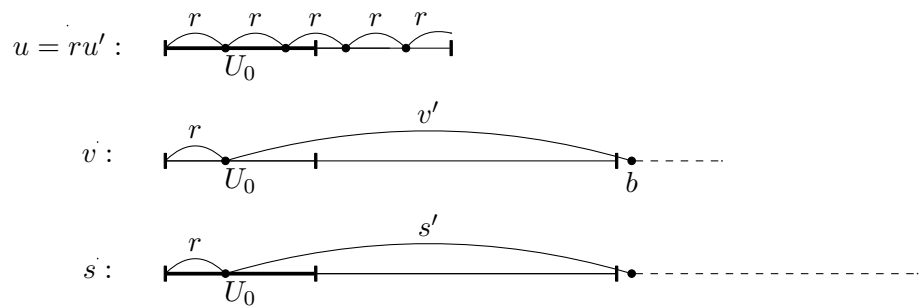
\begin{figure}[htbp]
  \begin{center}
\begin{picture}(102,45)(-15,5)
\gasset{Nw=.3,Nh=2,Nmr=0,fillgray=0} 
\gasset{ExtNL=y,NLdist=1,NLangle=-90} 

 \node(Y0)(0,40){$$}
  \node(Y1)(20,40){$$}
 \node(Y2)(38,40){$$}
 \node(Z0)(0,  25){$$}
\node(Z1)(20,25){$$}
\node(Z2)(60,25){$$}
   \node(W0)(0,10){$$}
   \node(W1)(20,10){$$}
   \node(W2)(60,10){$$}

\gasset{Nw=1,Nh=1,Nmr=1,fillgray=0} 

\node(V1)(8,40){$$}
\node(V2)(16,40){$$}
\node(V3)(24,40){$$}
\node(V4)(32,40){$$}
\node(Z3)(62,25){$b$}
\node(WV1)(8,10){$$}
\node(ZV1)(8,25){$$}
\node(W3)(62,10){$$}

  \gasset{Nw=0,Nh=0,Nmr=0,fillgray=0}

  \node(w)(-9,12){$s:$}
  \node(y)(-13,43){$u=ru':$}
  \node(z)(-9,27.5){$v:$}
 \node(V5)(38,42){$$}
  \node(W4)(100,10){$$}

  \drawedge[AHnb=0,ELside=r, linewidth=0.5](W0,W1){$U_0$}
   \drawedge[AHnb=0,ELside=l](W1,W2){}
   \drawedge[AHnb=0,ELside=l,dash={1.}](W3,W4){}
  \drawedge[AHnb=0,ELside=r](Z0,Z1){$U_0$}
  \drawedge[AHnb=0,ELside=r, linewidth=0.5](Y0,Y1){$U_0$}
  \drawedge[AHnb=0,ELside=l](Y1,Y2){}
  \drawedge[AHnb=0,ELside=r](Y1,Y12){}
\drawedge[AHnb=0,ELside=l](Z0,Z2){}
       \drawedge[AHnb=0,ELside=l](W1,W2){}

       \node(Z4)(74,25){$$}
    \drawedge[AHnb=0,ELside=l,dash={1.}](Z3,Z4){}

    \gasset{curvedepth=1}
           \drawedge[AHnb=0,ELside=l](V4,V5){$r$}

\gasset{curvedepth=2}

          \drawedge[AHnb=0,ELside=l](Y0,V1){$r$}
           \drawedge[AHnb=0,ELside=l](V1,V2){$r$}
           \drawedge[AHnb=0,ELside=l](V2,V3){$r$}
            \drawedge[AHnb=0,ELside=l](V3,V4){$r$}
             \drawedge[AHnb=0,ELside=l](W0,WV1){$r$}
             \drawedge[AHnb=0,ELside=l](Z0,ZV1){$r$}
      \gasset{curvedepth=4}
             \drawedge[AHnb=0,ELside=l](WV1,W3){$s'$}
             \drawedge[AHnb=0,ELside=l](ZV1,Z3){$v'$}
 \end{picture}
  \end{center}
  \caption{Case 2.2: $u$ is a prefix of $U$ and $|r|\leq|U_0|$.}
  \label{del 5}
\end{figure}
Then $s'$ is a prefix of a proper shuffle $z$ of $u'$ and $Ub$ and
hence by induction hypothesis $z\sllex Ub$. As $Ub=rv'\sllex v'$ by Lemma~\ref{period} we obtain $z\sllex v'$.
This implies $s'\sllex v'$ as $s'$ is the prefix of length $|v'|$ of $z$.
So, we get $s\sllex v$ as required. This concludes the proof of Lemma~\ref{propmin}.
\end{proof}

\begin{proof}[Proof of Theorem~\ref{extremal}]
Suppose $y\in \A^\nats$ is Lyndon relative to some order $\leq$ on $\A$.
We will apply the previous lemmas to $x=y$.
Recall that every prefix of $y$ is minimal relative to $\llex$.
We first consider the case where $z=y, $ i.e, when $w\in \sh(y,y)$.
Set
\[
w=\prod _{i=0}^\infty (U_iV_i)
\]
where
\[
y=\prod _{i=0}^\infty  U_i=\prod _{i=0}^\infty  V_i
\]
with each $U_i$ and $V_i$ non-empty.
Since $y$ is Lyndon, and hence in particular not purely periodic, it follows from Lemma~\ref{lup}
that $y$ contains arbitrarily long unbordered prefixes.
Let $v$ be an unbordered prefix of $y$ with $|v|>|U_0|$.
Writing $v=U_0v'$, since $v$ is unbordered and $y$ is Lyndon,
$v'\in {\mathcal{C}}(y)$. Let $s$ be such that $U_0s$ is a prefix of $w$ and $|s|=|v'|$.
Then 
$s$ is a prefix of a proper shuffle $x$ of $v'$ and a prefix of $y$.
By Lemma~\ref{propmin} we deduce that $x\sllex v'$. Then $s\sllex v'$ as $s$ is the prefix of length $|v'|$ of $x$.
Whence $U_0s\sllex U_0v'= v$ and hence $w \sllex y$.

Next suppose $z\neq y$.  Let
\[
w=\prod _{i=0}^\infty (U_iV_i)
\]
where
\[
y=\prod _{i=0}^\infty  U_i\quad\mbox{ and } \quad z=\prod _{i=0}^\infty  V_i
\]
with each $U_i$ and $V_i$ non-empty except for possibly $U_0$.
Suppose first  that $U_0$ is non-empty, that is to say  $y$ dishes out the initial segment of $w$.
Let $r$ be the shortest non-minimal prefix of $z$.
Then by Lemma~\ref{prodmin}, $r\in {\mathcal{C}}(y)$.
Let $t$ denote the prefix of $w$ of length $|r|$.
If $|r|\le|U_0|$ then $t\sllex r$. Suppose that $|r|>|U_0|$. Let $u$ be the prefix of length $|r|$ of $y$.
Then $t$ is a prefix of a proper shuffle $x$ of $u$ and $r$.
By Lemma~\ref{propmin},  $x\sllex r$, whence $t\sllex r$.
So in both cases $w \sllex z$.
Finally suppose that $U_0$ is empty, so that $z$ dishes out the  initial segment of $w$.
Let $z'$ be a tail of $z$ so that $z =V_0z'$.
Writing $w=V_0w'$, it follows that $w'$ is a shuffle of $z'$ and $y$
in which $y$ dishes out the initial segment of $z '$.
If $z' \neq y$ then we are done by the preceding case,
i.e, $w '\sllex z'$ whence $w \sllex z$.
If $z'=y$, then as we saw in the beginning of the proof, we again have $w' \sllex y=z'$ whence $w \sllex z$.
\end{proof}

\begin{remark}
Let $x=11010011001011010010110\cdots$ denote the first shift of
the Thue-Morse infinite word. Let us first show that $x$ is
Lyndon. In \cite{berstel} it was proved that the Thue-Morse word
$\mathbf{T}$ is the lexicographically maximal overlap-free word
beginning with $0$. A word is overlap-free, if it does not contain
factors of the form $avava$, $a\in \A$, $v\in \A^*$. Now, assume
that $x$ is not Lyndon, i.e., there is a tail $y$ of $x$ such that
$x \sllex y$. Then considering the left extension $01^k y$ of $y$
till the closest $0$ in the Thue-Morse word, we get that
$\mathbf{T}\sllex 01^k y$, a contradiction. So, $x$ is Lyndon and
hence is not self-shuffling; yet it can be verified that $x$
begins in only a finite number of Abelian unbordered words.
\end{remark}

Now we prove the following fact about Lyndon words:

\begin{proposition}\label{prop_lyndon} Let $x\in \A^\nats$ be aperiodic. Then $x$ admits a Lyndon word in its shift orbit closure.
\end{proposition}

\begin{proof} We first note that the condition that $x$ is aperiodic is necessary. For instance, if $x=(10)^\omega$ or
$x=10(1100)^\omega,$ then in each case the shift orbit closure of
$x$ does not contain a Lyndon word. So suppose $x\in \A^\nats$ is
aperiodic and let $\leq$ be any order on $\A.$ First suppose $x$
admits an aperiodic uniformly recurrent word $z$ in its shift
orbit closure. For each $n\geq 1$ let $z_n^<$ denote the
lexicographically smallest factor of $z$ of length $n.$ Then for
each $n$ we have that $z_n^<$ is a prefix of $z_{n+1}^<$. Let
$y=\lim_{n\rightarrow \infty}z_n^<.$ Then $y$ is in the shift
orbit closure of $z$ and hence in the shift orbit closure of $x.$
Moreover since $z$ is both aperiodic and uniformly recurrent it
follows that $y$ is Lyndon.

So we may suppose that every uniformly recurrent word $z$ in the
shift orbit closure of $x$ is ultimately periodic, and hence
purely periodic. For each $n\geq 1$ let $x_n^<$ and $x_n^>,$
respectively, denote the lexicographically smallest and largest,
respectively, factor of $x$ of length $n.$ We claim that for some
positive integer $n,$ either $x_n^<$ or $x_n^>$ is not uniformly
recurrent in $x$ (i.e., does not occur with bounded gaps). In
fact, suppose to the contrary that both $x_n^<$ and $x_n^>$ are
uniformly recurrent in $x$ for each $n\geq 1.$ Let
$y^<=\lim_{n\rightarrow \infty}x_n^<$ and $y^>=\lim_{n\rightarrow
\infty}x_n^>.$ Since each $x_n^<$ is a prefix of $y^<,$ and since
each $x_n^<$ is uniformly recurrent in $y^<,$ it follows that
$y^<$ is uniformly recurrent and hence periodic. Similarly, we
deduce $y^>$ is also periodic.  Again, since each $x_n^<$ and
$x_n^>$ is uniformly recurrent in $x,$ it follows that $y^<$ and
$y^>$ have the same set of factors. Hence, each is a suffix of the
other. Fix a prefix $v$ of $y^<$ so that $y^<=vy^>.$  Now since
$x$ is aperiodic, there exist a prefix $u$ of $y^<$ and distinct
letters $a,b \in \A$ with $|u|>|v|,$ $ua$ a prefix of $y^<$ and
$ub$ a factor of $x.$ Since $ua$ is a prefix of $y^<,$ we deduce
that $ua<ub$ and hence that $a<b.$ On the other hand, since
$v^{-1}ua$ is a prefix of $y^>,$ we deduce that
$v^{-1}ua>v^{-1}ub$ and hence that $a>b.$ This contradiction
establishes the desired claim.

Fix a positive integer $n$ such that either $x_n^<$ or $x_n^>$ is
not uniformly recurrent in $x.$ Up to replacing $\leq$ by the
opposite order, we can assume that $x_n^<$ is not uniformly
recurrent in $x.$ It follows that the shift orbit closure of $x$
contains an infinite word $y$ such that i) $x_n^<$ is a prefix of
$y$ and ii) $x_n^<$ is uni-occurrent in $y.$ It follows
immediately that $y$ is Lyndon.
\end{proof}

As an immediate consequence of Theorem \ref{Lyn} and Proposition
\ref{prop_lyndon} we have

\begin{corollary}
Every aperiodic word $x$  contains a point $y$ in its shift orbit
closure  which is not self-shuffling.
\end{corollary}

\section{A characterization of self-shuffling Sturmian words}\label{Sturmy}

Sturmian words admit various types of
characterizations of geometric and combinatorial nature, e.g., they can be defined via balance,
complexity, morphisms, etc. (see Chapter 2 in \cite{Lo2}).
In \cite {MoHe2}, Morse and Hedlund showed that each Sturmian word
may be realized geometrically by an irrational rotation on the
circle. More precisely, every Sturmian word $x$ is obtained by
coding the symbolic orbit of a point $\rho(x)$ on the circle (of
circumference one) under a rotation by an irrational angle $\alpha$
where the circle is partitioned into two complementary
intervals, one of length $\alpha $ (labeled $1)$ and the other of
length $1-\alpha $ (labeled $0)$ (see Fig.~\ref{geosturm}).
And conversely each such coding gives rise to a Sturmian word.
The irrational $\alpha $ is called the {\it slope} and the point
$\rho(x)$ is called the {\it intercept} of the Sturmian word $x$.
A Sturmian word $x$ of slope $\alpha$ with $\rho(x)=\alpha$ is called a {\it characteristic Sturmian word}.
It is well known that every prefix $u$ of a characteristic Sturmian word is {\it left
special,} i.e., both $0u$ and $1u$ are factors of $x$ \cite{Lo2}.
Thus if $x$ is a characteristic Sturmian word of slope $\alpha$,
then both $0x$ and $1x$ are Sturmian words of slope $\alpha$ and $\rho(0x)=\rho(1x)=0$.
The fact that $\rho$ is not one-to-one stems from the ambiguity of the coding of the boundary points $0$ and $1-\alpha$.

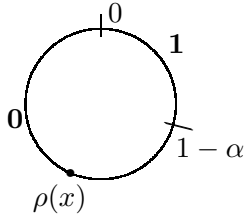
\begin{figure}[htbp]
\unitlength=.5mm
\begin{picture}(20,60)
\put(140,0){
  \qbezier(25.000,10.000)(33.284,10.000)(39.142,15.858)
  \qbezier(39.142,15.858)(45.000,21.716)(45.000,30.000)
  \qbezier(45.000,30.000)(45.000,38.284)(39.142,44.142)
  \qbezier(39.142,44.142)(33.284,50.000)(25.000,50.000)
  \qbezier(25.000,50.000)(16.716,50.000)(10.858,44.142)
  \qbezier(10.858,44.142)( 5.000,38.284)(5.000,30.000)
  \qbezier( 5.000,30.000)( 5.000,21.716)(10.858,15.858)
  \qbezier(10.858,15.858)(16.716,10.000)(25.000,10.000)

  \put(0,23.75){$\mathbf 0$}
  \put(43,43.75){$\mathbf 1$}
  \put(17,11.67){\circle*{2}}  \put(7,3){$\rho(x)$}

  \put(25,48){\line(0,1){6}}  \put(27,52){$0$}
  \put(42,25){\line(4,-1){7.5}} \put(45,16){$1-\alpha$}}
\end{picture}
\caption{Geometric picture of a Sturmian word of slope $\alpha$.}
\label{geosturm}
\end{figure}
\smallskip

\begin{theorem}\label{the:rotation}
Let $S$, $M$ and $L$ be Sturmian words of the same slope $\alpha$, $0<\alpha<1$, satisfying $S\le_{\lex}M\le_{\lex}L$.
Then $M\in\sh (S,L)$ if and only if the following conditions hold:
If  $\rho(M)=\rho(S)$ (respectively, $\rho(M)=\rho(L))$,
then $\rho(L)\neq 0$ (respectively $\rho(S)\neq 0$).
\end{theorem}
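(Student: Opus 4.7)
The plan is to work entirely in the rotation picture. Writing $T$ for rotation by $\alpha$ on $\R/\Z$ with partition $I_0=[0,1-\alpha),\, I_1=[1-\alpha,1)$, a shuffle $M\in\sh(S,L)$ corresponds to an infinite lattice path $(k_n,j_n)_{n\ge 0}$ starting at $(0,0)$ with $k_n+j_n=n$, moving right or up at each step, with both coordinates tending to $\infty$, and such that at every step the letter $M[n]$ matches at least one of $S[k_n]$ or $L[j_n]$. Under the coding, this is exactly the condition that $T^n\rho(M)$ lies in the same partition interval as $T^{k_n}\rho(S)$ (if we step in the $k$-direction) or as $T^{j_n}\rho(L)$ (if we step in the $j$-direction). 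So everything is a question about the simultaneous behavior of three orbits, controlled by the differences $\rho(M)-\rho(S)$ and $\rho(L)-\rho(M)$ on the circle.

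For the sufficiency direction I would define an explicit stepping-stone rule, guided by the hypothesis $S\le_{\lex}M\le_{\lex}L$: translated into the dynamical picture, this lex-ordering forces a definite cyclic arrangement of the three points $T^{k_n}\rho(S),\,T^n\rho(M),\,T^{j_n}\rho(L)$ on the circle (roughly, $T^n\rho(M)$ lies between the other two in the ``short'' arc not containing $0$, depending on which interval it is in). The rule is then: at step $n$, choose to increment $k$ (attribute $M[n]$ to $S$) whenever $M[n]=S[k_n]$ and $M[n]\ne L[j_n]$, choose to increment $j$ in the mirror case, and when $M[n]$ agrees with both, use the tie-break that preserves the cyclic arrangement just described. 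The key lemma to prove is that (i) one of the two moves is always legal, so the rule never gets stuck, and (ii) both $k_n$ and $j_n$ diverge, so $S$ and $L$ are each exhausted. Step (i) is a finite case check in the three-point configuration using $S\le_{\lex}M\le_{\lex}L$, and step (ii) follows from minimality and unique ergodicity of the irrational rotation, provided the rule is not degenerate, which is where the intercept hypothesis enters.

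For the necessity direction, the only cases to exclude are $\rho(M)=\rho(S)$ with $\rho(L)=0$, and the symmetric one. In the first, since $\rho(M)=\rho(S)$ and the orbits miss the two ambiguity points $0$ and $1-\alpha$ as long as the common intercept is nonzero, we have $M=S$ as infinite words, so a shuffle $M\in\sh(S,L)$ forces $L$ to be a subsequence of $M=S$. I would then show that if moreover $\rho(L)=0$, so that $L\in\{0c_\alpha,1c_\alpha\}$, this forces $L$'s first letter to be aligned with the wrong partition element of $T^n\rho(M)$ for all but finitely many $n$, and in fact the unique way to pick the subsequence $L$ out of $S$ would eventually require matching against points $T^k\rho(S)$ that have to lie on a specific boundary-value side; a limit/compactness argument then shows that the pool of such indices is too thin to accommodate the entire word $L$, producing the required obstruction. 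The symmetric case is identical.

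The hardest part will be the verification in the sufficiency direction that the stepping-stone rule (i) never stalls at a configuration where neither $T^{k_n}\rho(S)$ nor $T^{j_n}\rho(L)$ lies in the same partition element as $T^n\rho(M)$, and (ii) genuinely alternates infinitely often in each direction. Step (i) reduces to showing that the lex-order hypothesis, transported to the circle via the coding, is preserved as a dynamical invariant by the rule -- essentially a $\{S\le_\lex M\le_\lex L\}$-type inequality that is maintained inductively and rules out the bad configuration. Step (ii) is where the hypothesis $\rho(L)\ne 0$ (when $\rho(M)=\rho(S)$), respectively $\rho(S)\ne 0$, becomes indispensable: it guarantees that the orbit of the relevant intercept is not trapped at a boundary point and therefore, by unique ergodicity, the set of $n$ at which each move is forced has positive density.
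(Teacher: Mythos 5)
Your sufficiency sketch is in the same spirit as the paper's proof (work on the circle, build a stepping-stone path where $\rho(S)$ and $\rho(L)$ take turns ``following'' the orbit of $\rho(M)$), but the rule you propose --- greedily attribute $M[n]$ to whichever of $S,L$ matches, with an unspecified tie-break ``preserving the cyclic arrangement'' --- is not the paper's rule, and the claim you defer to ``a finite case check,'' namely that the rule never reaches a configuration where neither $T^{k_n}\rho(S)$ nor $T^{j_n}\rho(L)$ lies in the same partition element as $T^n\rho(M)$, is essentially the whole theorem. In the paper the follower does \emph{not} always switch at the first letter disagreement: in some states it continues until a specific dynamical event occurs (e.g.\ until $0<\rho(m)=\rho(\ell)<\rho(s)$, or for exactly one rotation), and correctness is established by a ten-state automaton whose transitions preserve explicit invariants ($\neg C$, $\neg P_1$, $\neg P_2$) on the three intercepts; the hypotheses $\rho(L)\neq0$, $\rho(S)\neq0$ are used precisely to guarantee that certain ``wait until'' events occur (density of the rotation orbit in a nonempty arc). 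Without exhibiting the invariant and checking the transitions, there is no reason a purely local greedy rule avoids stalling, and your appeal to unique ergodicity for divergence of $k_n,j_n$ does not substitute for this.

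The necessity direction has a concrete error: $\rho(M)=\rho(S)$ does \emph{not} force $M=S$. The intercept map is two-to-one exactly on the backward orbit of $0$, so besides $M=S$ one must handle $S=0x$, $M=1x$ and $S=u01x$, $M=u10x$ (with $x$ characteristic); your argument silently assumes the common intercept is off that orbit. Moreover, even in the case $M=S$, the obstruction to $M\in\sh(S,L)$ with $\rho(L)=0$ cannot be a density/thinness count --- $S$, $M$ and $L$ all have the same letter frequencies, so no ``pool of indices'' is too thin. The paper's actual mechanism is order-theoretic: if $L=0x$ then $L$ is lexicographically minimal in the subshift, forcing $S=M=L$ and contradicting Proposition~\ref{abborders}; if $L=1x$ one reverses the order on the alphabet so that $L$ becomes an infinite Lyndon word and invokes Theorem~\ref{extremal}, which says every shuffle of a Lyndon word with a point of its orbit closure is strictly lexicographically smaller. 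That Lyndon theorem (whose proof the paper describes as long and delicate) is the missing ingredient; your compactness/limit sketch does not supply it.
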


Before proving the theorem, we mention two corollaries. In
particular (taking $S=M=L)$, we obtain the following
characterization of self-shuffling Sturmian words:

\begin{corollary}\label{ssSturm}
A Sturmian word $x\in \{0,1\}^\nats$ is self-shuffling if and only if $\rho(x)\neq 0$,
or equivalently, $x$ is not of the form $aC$ where $a\in \{0,1\}$ and $C$ is a characteristic Sturmian word.
\end{corollary}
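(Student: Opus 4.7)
The plan is to deduce the corollary as the diagonal $S=M=L=x$ case of Theorem~\ref{the:rotation}. Under this specialization the ordering hypothesis $S\llex M\llex L$ is trivial, and the shuffle relation $M\in\sh(S,L)$ becomes the definition of $x$ being self-shuffling. The two implications in the theorem---``if $\rho(M)=\rho(S)$ then $\rho(L)\neq 0$'' and ``if $\rho(M)=\rho(L)$ then $\rho(S)\neq 0$''---both have antecedents that become tautological ($\rho(x)=\rho(x)$) and consequents that coincide ($\rho(x)\neq 0$), so their conjunction reduces to the single condition $\rho(x)\neq 0$. Thus Theorem~\ref{the:rotation} directly yields: $x$ is self-shuffling if and only if $\rho(x)\neq 0$, which is the main content of the corollary.

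To finish, I would justify the ``or equivalently'' reformulation. By the Morse--Hedlund geometric description recalled at the start of \S\ref{Sturmy}, a Sturmian word of slope $\alpha$ is obtained by coding a rotation orbit, and the intercept map $\rho$ is injective except at the boundary point $0$ of the partition, which admits two valid codings. These two codings produce exactly the words $0C$ and $1C$, where $C$ is the characteristic Sturmian word of slope $\alpha$; both prefixes are legitimate because $C$ is left-special, and $C$ itself has $\rho(C)=\alpha\neq 0$. Hence $\rho(x)=0$ is equivalent to $x=aC$ for some $a\in\{0,1\}$ and characteristic $C$, which is the parenthetical reformulation.

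Since all the nontrivial analysis---constructing an explicit shuffle in the non-degenerate case and obstructing one when $\rho(x)=0$---is already accomplished in Theorem~\ref{the:rotation}, there is no genuine obstacle in the corollary itself; the only remaining task is the logical bookkeeping of the diagonal specialization together with the standard dictionary between intercept-zero Sturmian words and shifts of characteristic words.
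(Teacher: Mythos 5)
Your proposal is correct and matches the paper exactly: the paper derives the corollary in one line by taking $S=M=L$ in Theorem~\ref{the:rotation}, which is precisely your diagonal specialization, and the identification of the intercept-$0$ words with $0C$ and $1C$ is the same dictionary the paper sets up at the start of \S\ref{Sturmy}. Nothing further is needed.
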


As another immediate consequence we have:

\begin{corollary}
Let $C\in \{0,1\}^\nats$ be a characteristic Sturmian word beginning in $a^nb$ with  $\{a,b\}=\{0,1\}$.
Then every point $x$ in the shift orbit closure of $C$ beginning in $a^n$ belongs to $\sh (C,C)$.
\end{corollary}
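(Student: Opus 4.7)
The plan is to build the shuffle $x\in\sh(C,C)$ directly by exploiting the shared prefix $a^n$ of $x$ and $C$, then to invoke Theorem~\ref{the:rotation} on the resulting tail problem. Write $C=a^nb\xi$ and $x=a^n y$ with $y=\sigma^n(x)$; since $x$ lies in the shift orbit closure of $C$, the word $y$ is Sturmian of slope $\alpha$. The first move is to consume the initial block $a^n$ of $x$ entirely from the second copy of $C$; this is legal because each copy of $C$ begins with $a^n$. After this step the first copy is untouched (still equal to $C$) while the second copy has been reduced to $b\xi=\sigma^n(C)$, so the task reduces to showing $y\in\sh(C,\sigma^n(C))$.

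I would then apply Theorem~\ref{the:rotation} with $M=y$ and $\{S,L\}=\{C,\sigma^n(C)\}$ placed in the appropriate lex order. Because $\rho(C)=\alpha\neq 0$ and $\rho(\sigma^n(C))=(n+1)\alpha\neq 0$, the rotational conditions of Theorem~\ref{the:rotation} are automatically satisfied, regardless of how $\rho(y)$ compares with either of them. All that remains is the lex hypothesis of Theorem~\ref{the:rotation}, namely that $y$ lie in the lex interval bounded by $C$ and $\sigma^n(C)$. Verifying this reduces to a geometric comparison on the circle between the arc of intercepts parametrizing Sturmian words starting with $a^n$ and the arc parametrizing Sturmian words lex-between $C$ and $\sigma^n(C)$.

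One exceptional case lies outside this generic scheme: the intercept-zero word $x=aC$. For this case I would instead take $U_1=V_1=a$, splitting the first two letters of $x$ across the two copies of $C$; after these positions, both remainders equal $\sigma(C)$, reducing the task to the self-shuffling of $\sigma(C)$. Since $\rho(\sigma(C))=2\alpha\neq 0$, Corollary~\ref{ssSturm} supplies the required self-shuffle, and hence $aC\in\sh(C,C)$.

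The main obstacle is the lex-betweenness verification in the generic case. For certain $x$ in the orbit closure the direct reduction may fail because $y$ falls outside $[C,\sigma^n(C)]_{\le_{\lex}}$, and in such situations one must choose a different split. A natural alternative, available when $x$ starts with $a^nb$, is to consume both the initial $a^n$ and the subsequent $b$ from a single copy, reducing instead to $\sigma^{n+1}(x)\in\sh(C,\sigma^{n+1}(C))$; similarly, when $x$ starts with $a^{n+1}$ one can consume a single $a$ from the other copy before the block. Arguing that, uniformly in $x$, at least one such choice of split yields a configuration satisfying the lex hypothesis of Theorem~\ref{the:rotation} is where the real technical content is expected to reside.
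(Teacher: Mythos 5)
Your reduction is legitimate as far as it goes: consuming the block $a^n$ from one copy does reduce the problem to $\sigma^n(x)\in\sh(C,\sigma^n(C))$ (writing $\sigma^n$ for the $n$-th shift), and you are right that the intercept conditions of Theorem~\ref{the:rotation} are then vacuous, since $\rho(C)=\alpha$ and $\rho(\sigma^n(C))$ are both nonzero. But the argument stops exactly where the content of the statement lies: the hypothesis $S\le_{\lex}M\le_{\lex}L$ is never verified, and for your generic split it fails on a whole arc of intercepts, not just at isolated exceptional points. Concretely, take $a=0$, so that $1/(n+2)\le\alpha<1/(n+1)$. The points of the subshift beginning in $0^n$ are (up to boundary conventions) exactly those with $\rho(x)\in[0,1-n\alpha)$, an interval strictly containing $[0,\alpha]$; since lexicographic order on a Sturmian subshift agrees with the order of intercepts, whenever $\rho(x)>\alpha$ we get $\rho(\sigma^n(x))=\rho(x)+n\alpha>(n+1)\alpha=\rho(\sigma^n(C))>\alpha=\rho(C)$, so $\sigma^n(x)$ is lexicographically larger than \emph{both} $C$ and $\sigma^n(C)$, is therefore not the middle word $M$, and Theorem~\ref{the:rotation} is silent. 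One is then forced into your alternative splits, and proving that some split always restores the betweenness hypothesis --- in effect the intercept computation above, which shows the direct split covers $\rho(x)\in[0,\alpha]$ while the split consuming $a^nb$ covers $\rho(x)\in[\alpha,1-n\alpha)$ --- is the entire proof. As written this is a genuine gap, which you yourself flag.

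The paper sidesteps this bookkeeping by moving in the opposite direction: rather than deleting the common prefix, it prepends a letter. For $x\ne C$ some $cx$ with $c\in\{a,b\}$ lies in the subshift, and Theorem~\ref{the:rotation} is applied to $M=cx$ with $\{S,L\}=\{cC,C\}$. Since $aC$ and $bC$ are the lexicographic minimum and maximum of the whole subshift, the betweenness hypothesis is automatic ($bx\le_{\lex}bC$ always, while $ax$, beginning in $a^{n+1}$, precedes $C$, which begins in $a^nb$), and the intercept conditions reduce to $bx\ne C$ and $x\ne\sigma(C)$, both immediate. Stripping the prepended letter, which can always be arranged to come from the $cC$ copy, finishes the proof. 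The lesson is that when invoking Theorem~\ref{the:rotation} one should try to place the extremal words $aC$ and $bC$ in the roles of $S$ and $L$, because then the lexicographic hypothesis costs nothing.
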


\begin{proof} Let $X$ denote the shift orbit closure of $C$. In case $x=C$ the result follows from Corollary~\ref{ssSturm}.
Next suppose $x\neq C$. Suppose $bx\in X$.
Then  by Theorem~\ref{the:rotation} it follows that $bx\in \sh(bC,C)$ whence $x\in \sh(C,C)$.
Next suppose $ax\in X$.
Then again by Theorem~\ref{the:rotation} we have $ax\in \sh (aC,C)$.
Since $ax$ begins in $a^{n+1}$, then for any shuffle of $aC$ and $C$
which produces $ax$, one of the initial $n+1$ leading $a'$s in $ax$ must come from the leading $a$ in $aC$.
Thus it can always be arranged that the first $a$ in $ax$ comes from the leading $a$ in $aC$.
Thus again we have that $x\in \sh (C,C)$.
\end{proof}

\begin{proof}[Proof of Theorem~\ref{the:rotation}]
We begin by showing that the conditions stated in Theorem~\ref{the:rotation} are in fact necessary for $M\in\sh(S,L)$.
To see this, suppose $\rho(M)=\rho(S)$ and $\rho(L)=0$ (the other symmetric condition works analogously).
This implies that $L \in \{0x,1x\}$ where $x$ is the characteristic Sturmian word of slope $\alpha$.
If $L=0x$, then as $0x$ is minimal in the Sturmian subshift of slope $\alpha$, it follows that $S=M=L$.
Whence by Proposition~\ref{abborders},  $M\notin \sh(M,M)=\sh(S,L)$. If $L=1x$,
we consider the lexicographic order induced by $0>1$.
Then $L\le_{\lex}M\le_{\lex}S$ and moreover $L$ is minimal.
Since $\rho(M)=\rho(S)$ we have that either case i) $M=S$ or case ii) $S=0x$ and $M=1x$
or case iii) there exists $u\in \{0,1\}^*$ such that $S=u01x$ and $M=u10x$
where in each case $x$ denotes the characteristic Sturmian word of slope $\alpha$.
In case i), using Theorem~\ref{extremal} we deduce that each element of $\sh (S,L)$ is lexicographically smaller
than $S$ and hence since $M=S$ we have $M\notin \sh (S,L)$.
In case ii), if $M\in \sh(S,L)$, then $x\in \sh (x,0x)$ which contradicts Theorem~\ref{extremal}.
Finally, in case iii), suppose to the contrary that $M\in \sh(S,L)$.
Then since $u0$ is not a prefix of $M$, it follows that there exists a non-empty prefix $v$ of $L$
and a prefix $w$ of  $M$ such that $|w|=|u0|+|v|$ and $M'\in \sh (L', 1x)$
where $M'$ and $L'$ are defined by $M=wM'$ and $L=vL'$.
But this implies $M'=L'$, whence $L'\in \sh(L',1x)$ which again contradicts Theorem~\ref{extremal}.\\

We next show that the conditions stated in Theorem~\ref{the:rotation} are sufficient.
Without loss of generality we can assume $0<\alpha<1/2$.

Our proof explicitly describes an algorithm for shuffling $S$ and $L$ so as to produce $M$.
It is formulated in terms of the circle rotation description of Sturmian words.
Geometrically speaking, points $\rho(S)$ and $\rho(L)$ will take turns following the trajectory of $\rho(M)$
so that the respective codings agree; as one follows the other waits its turn (remains neutral).
The algorithm specifies this following rule depending on the relative positions
of the trajectories of all three points and is broken down into several cases.
The proof can be summarized by the directed graph  in Fig.~\ref{pc-fig} in which each state $n$
corresponds to ``case $n"$ in the proof.\\
\begin{figure}[htbp]
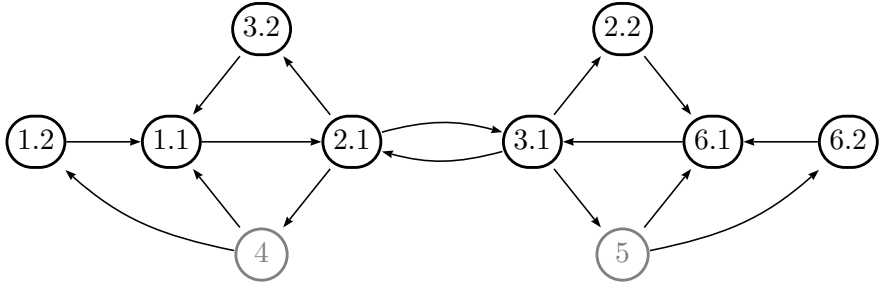

\centering
\VCDraw{%
\begin{VCPicture}{(0,-3)(20,3)}
 \LargeState
 \StateVar[1.2]{(1,0)}{1.2}
 \StateVar[1.1]{(4,0)}{1.1}
 \StateVar[2.1]{(8,0)}{2.1}
 \StateVar[3.2]{(6,2.5)}{3.2}
 \StateVar[3.1]{(12,0)}{3.1}
 \StateVar[6.1]{(16,0)}{6.1}
  \StateVar[6.2]{(19,0)}{6.2}
\StateVar[2.2]{(14,2.5)}{2.2}
 \DimState \StateVar[4]{(6,-2.5)}{4}
 \StateVar[5]{(14,-2.5)}{5}
\EdgeR{1.2}{1.1}{}
\EdgeR{1.1}{2.1}{}
\EdgeR{2.1}{3.2}{}
\EdgeR{2.1}{4}{}
\EdgeR{3.2}{1.1}{}
\EdgeR{4}{1.1}{}
\ArcL{2.1}{3.1}{}
\ArcL{3.1}{2.1}{}
\EdgeR{6.2}{6.1}{}
\EdgeR{6.1}{3.1}{}
\EdgeR{3.1}{2.2}{}
\EdgeR{3.1}{5}{}
\EdgeR{2.2}{6.1}{}
\EdgeR{5}{6.1}{}
\ArcL{4}{1.2}{}
\ArcR{5}{6.2}{}
\end{VCPicture}}
\caption{Graphical depiction of the proof of Theorem~\ref{the:rotation}.}
\label{pc-fig}
\end{figure}

\noindent{\bf The states:}
We denote by $s$, $m$, and $\ell$ the current tail of the words $S$, $M$, and $L$.
They are initialized as
\[
s:=S, \ \ell:=L, \text{ and } m:=M.
\]
While $m$ is always a tail of $M$, the letters $s$ and $\ell$ may be tails of $S$ or $L$,
depending on which is the current lexicographically largest\footnote{The choice of the letter $s,m$,
and $\ell$ is intended to refer to {\em small, medium}, and {\em large} respectively.}.
Each state, or case in the proof, is described by a figure depicting
the relative positions of $\rho(s)$, $\rho(m)$ and $\rho(\ell)$,
which for the sake of simplicity, are actually labeled $s,m$ and $\ell$ respectively.
If $x\in\{s,m,\ell\}$  is depicted inside the interval $(0,1-\alpha)$ (resp. $(1-\alpha,1)$),
then this implies that the first letter of the coding of $x$ is $0$ (respectively $1$).
Moreover the endpoints of the partition interval are regarded as both open and closed.
For example, even if $s$ and $m$ are both depicted in the interval $(0,1-\alpha)$,
it could be that  $\rho(s)=0$ and $\rho(m)=1-\alpha$. In the same way, even if $s$ and $m$
are depicted in distinct intervals of the circle partition, it could be that $\rho(s)=\rho(m)$.
In addition to their relative positions on the circle, each state lists a set of relations
which the variables $s$, $m$ and $\ell$ must satisfy.
These conditions are written to the right of the circle picture and are described in terms of the following predicates:
\begin{align*}
C(s,m,\ell) & \equiv \  [\rho(m)=\rho(s) \text{ and } \rho(\ell)=0] \text{ or }  [\rho(m)=\rho(\ell) \text{ and } \rho(s)=0];\\
P_1(s,m,\ell) & \equiv \ \rho(s)=\alpha \text{ and } \rho(m)=0 \text{ and } \rho(\ell)=1- \alpha;\\
P_2(s,m,\ell) & \equiv \  [(\rho(\ell)-\rho(m)) \bmod{1}= \alpha  \text{ and } \rho(s)=1-\alpha]  \text{ or }  \rho(m)=0.
\end{align*}
All states, except those labeled $4$ and $5$, can be taken to be initial states. \\

\noindent{\bf The edges:} Each directed edge corresponds to a precise set of instructions
which specify which of $s$ or $\ell$ is neutral,  which of $s$ or $\ell$ follows $m$
and for how long, and in the end a possible relabeling of the variables $s$ and $\ell$.
In each case the outcome leads to a new case in which there is a switch in the follower.
In other words, if there is an edge from case $i$ to case  $j$ in the graph,
then either the instructions for case $i$ and case $j$ specify different followers
(as is the case for cases $1.1$ and $2.1$) in which case the passage from $i$ to $j$
leaves the labeling of $s$ and $\ell$ unchanged, or the instructions for case $i$ and case $j$
specify the same follower (as is the case for cases $1.2$ and $1.1$)
in which case the passage from $i$ to $j$ exchanges the labeling of $s$ and $\ell$.
The proof of Theorem~\ref{the:rotation} amounts to showing that for each state $n$ in the graph,
the specified instructions will take $n$ to an adjacent state in the graph.\\

What follows is a complete listing of all ten cases with their respective set of instructions.\\

\noindent
\begin{minipage}{\textwidth}
{\bf Case 1.1:}

\unitlength=.5mm
\begin{picture}(50,60)
\put(50,0){
  \qbezier(25.000,10.000)(33.284,10.000)(39.142,15.858)
  \qbezier(39.142,15.858)(45.000,21.716)(45.000,30.000)
  \qbezier(45.000,30.000)(45.000,38.284)(39.142,44.142)
  \qbezier(39.142,44.142)(33.284,50.000)(25.000,50.000)
  \qbezier(25.000,50.000)(16.716,50.000)(10.858,44.142)
  \qbezier(10.858,44.142)(5.000,38.284)(5.000,30.000)
  \qbezier( 5.000,30.000)( 5.000,21.716)(10.858,15.858)
  \qbezier(10.858,15.858)(16.716,10.000)(25.000,10.000)

  \put(6,23.75){\circle*{2}}   \put(-0.5,22){$s$}
  \put(10,16.77){\circle*{2}}  \put(2.5,12){$m$}
  \put(17,11.67){\circle*{2}}  \put(12,4.5){$\ell$}

  \put(25,48){\line(0,1){6}}  \put(27,52){$0$}
  \put(42,25){\line(4,-1){7.5}} \put(45,16){$1-\alpha$}}

  \put(150,27.5){$s\le_{\lex} m <_{\lex}\ell$; $\neg C(s,m,\ell)$}
\end{picture}
\end{minipage}
  \smallskip

\noindent {\it Instruction:} $s$ is neutral and $\ell$ follows $m$ until they lie in different elements of the circle partition.
No relabeling of $s$ and $\ell$.\\

\noindent
\begin{minipage}{\textwidth}
{\bf Case 1.2:}

\unitlength=.5mm
\begin{picture}(50,60)
\put(50,0){
  \qbezier(25.000,10.000)(33.284,10.000)(39.142,15.858)
  \qbezier(39.142,15.858)(45.000,21.716)(45.000,30.000)
  \qbezier(45.000,30.000)(45.000,38.284)(39.142,44.142)
  \qbezier(39.142,44.142)(33.284,50.000)(25.000,50.000)
  \qbezier(25.000,50.000)(16.716,50.000)(10.858,44.142)
  \qbezier(10.858,44.142)( 5.000,38.284)(5.000,30.000)
  \qbezier( 5.000,30.000)( 5.000,21.716)(10.858,15.858)
  \qbezier(10.858,15.858)(16.716,10.000)(25.000,10.000)

  \put(6,23.75){\circle*{2}}   \put(-0.5,22){$s$}
  \put(17,11.67){\circle*{2}}  \put(7,3){$m,\ell$}

  \put(25,48){\line(0,1){6}}  \put(27,52){$0$}
  \put(42,25){\line(4,-1){7.5}} \put(45,16){$1-\alpha$}}

  \put(150,27.5){$s\le_{\lex} m =\ell$; $\neg C(s,m,\ell)$}
\end{picture}
\end{minipage}
\smallskip

\noindent {\it Instruction:}
$s$ is neutral and $\ell$ follows $m$ until $0<\rho(m)=\rho(\ell)<\rho(s)$.
We note that this is always possible since $\rho(s)\not =0$
and the set $\left\{(\rho(m)+n\alpha)\bmod{1}\colon n\in\mathbb{N}\right\}$ is dense in the unit circle.
Exchange the labels $s \leftrightarrow \ell$.\\

\noindent
\begin{minipage}{\textwidth}
{\bf Case 2.1:}

\unitlength=.5mm
\begin{picture}(50,60)
\put(50,0){
  \qbezier(25.000,10.000)(33.284,10.000)(39.142,15.858)
  \qbezier(39.142,15.858)(45.000,21.716)(45.000,30.000)
  \qbezier(45.000,30.000)(45.000,38.284)(39.142,44.142)
  \qbezier(39.142,44.142)(33.284,50.000)(25.000,50.000)
  \qbezier(25.000,50.000)(16.716,50.000)(10.858,44.142)
  \qbezier(10.858,44.142)( 5.000,38.284)(5.000,30.000)
  \qbezier( 5.000,30.000)( 5.000,21.716)(10.858,15.858)
  \qbezier(10.858,15.858)(16.716,10.000)(25.000,10.000)

  \put(10,16.77){\circle*{2}}   \put(4,12){$s$}
  \put(30,10.63){\circle*{2}}   \put(28,4){$m$}
  \put(40,43.23){\circle*{2}}   \put(41,45){$\ell$}

\put(25,48){\line(0,1){6}}  \put(27,52){$0$}
  \put(42,25){\line(4,-1){7.5}} \put(45,16){$1-\alpha$}}

  \put(150,27.5){$s<_{\lex} m$; $\neg C(s,m,\ell)$}
\end{picture}
\end{minipage}
\smallskip

\noindent {\it Instruction:}
$\ell $ is neutral and $s$ follows $m$ until  they lie in different elements of the circle partition.
No relabeling of $s$ and $\ell$.
Three cases are possible according to the relative position of $m$ and $\ell$ in the partition $(1-\alpha, 1)$.\\

\noindent
\begin{minipage}{\textwidth}
{\bf Case 2.2:}

\unitlength=.5mm
\begin{picture}(50,60)
\put(50,0){
  \qbezier(25.000,10.000)(33.284,10.000)(39.142,15.858)
  \qbezier(39.142,15.858)(45.000,21.716)(45.000,30.000)
  \qbezier(45.000,30.000)(45.000,38.284)(39.142,44.142)
  \qbezier(39.142,44.142)(33.284,50.000)(25.000,50.000)
  \qbezier(25.000,50.000)(16.716,50.000)(10.858,44.142)
  \qbezier(10.858,44.142)( 5.000,38.284)(5.000,30.000)
  \qbezier( 5.000,30.000)( 5.000,21.716)(10.858,15.858)
  \qbezier(10.858,15.858)(16.716,10.000)(25.000,10.000)

  \put(10,16.77){\circle*{2}}   \put(-4,10){$s,m$}
  \put(40,43.23){\circle*{2}}   \put(41,45){$\ell$}

\put(25,48){\line(0,1){6}}  \put(27,52){$0$}
  \put(42,25){\line(4,-1){7.5}} \put(45,16){$1-\alpha$}}

  \put(150,27.5){$s= m$; $\neg C(s,m,\ell)$}
\end{picture}
\end{minipage}
\smallskip

\noindent {\it Instruction:}
$\ell$ is neutral and $s$ follows $m$ until $\rho(m)=\rho(s)>\rho(\ell)$.
This is possible because $\rho(\ell)\neq 0$.
Exchange the labels $s \leftrightarrow \ell$.\\

\noindent
\begin{minipage}{\textwidth}
{\bf Case 3.1:}

\unitlength=.5mm
\begin{picture}(50,60) 
\put(50,0){
  \qbezier(25.000,10.000)(33.284,10.000)(39.142,15.858)
  \qbezier(39.142,15.858)(45.000,21.716)(45.000,30.000)
  \qbezier(45.000,30.000)(45.000,38.284)(39.142,44.142)
  \qbezier(39.142,44.142)(33.284,50.000)(25.000,50.000)
  \qbezier(25.000,50.000)(16.716,50.000)(10.858,44.142)
  \qbezier(10.858,44.142)( 5.000,38.284)(5.000,30.000)
  \qbezier( 5.000,30.000)( 5.000,21.716)(10.858,15.858)
  \qbezier(10.858,15.858)(16.716,10.000)(25.000,10.000)

  \put(6,23.75){\circle*{2}}   \put(-0.5,22){$s$}
  \put(44,36.245){\circle*{2}}   \put(46,35){$m$}
  \put(40,43.23){\circle*{2}}   \put(41,45){$\ell$}

  \put(25,48){\line(0,1){6}}  \put(27,52){$0$}
  \put(42,25){\line(4,-1){7.5}} \put(45,16){$1-\alpha$}}

  \put(150,27.5){$m<_{\lex}\ell$; $\neg C(s,m,\ell)$}
  \end{picture}
\end{minipage}
  \smallskip

\noindent {\it Instruction:}
$s$ is neutral and $\ell$ follows $m$ until they lie in different elements of the circle partition.
No relabeling of $s$ and $\ell$.
Three cases are possible according to the relative position of $m$ and $s$ in the partition $(0, 1-\alpha)$.\\

\noindent
\begin{minipage}{\textwidth}
{\bf Case 3.2:}

\unitlength=.5mm
  \begin{picture}(50,60) 
\put(50,0){
  \qbezier(25.000,10.000)(33.284,10.000)(39.142,15.858)
  \qbezier(39.142,15.858)(45.000,21.716)(45.000,30.000)
  \qbezier(45.000,30.000)(45.000,38.284)(39.142,44.142)
  \qbezier(39.142,44.142)(33.284,50.000)(25.000,50.000)
  \qbezier(25.000,50.000)(16.716,50.000)(10.858,44.142)
  \qbezier(10.858,44.142)( 5.000,38.284)(5.000,30.000)
  \qbezier( 5.000,30.000)( 5.000,21.716)(10.858,15.858)
  \qbezier(10.858,15.858)(16.716,10.000)(25.000,10.000)

  \put(6,23.75){\circle*{2}}   \put(-0.5,22){$s$}
  \put(40,43.23){\circle*{2}}   \put(43,44){$m,\ell$}

  \put(25,48){\line(0,1){6}}  \put(27,52){$0$}
  \put(42,25){\line(4,-1){7.5}} \put(45,16){$1-\alpha$}}

  \put(150,27.5){$m=\ell$; $\neg C(s,m,\ell)$}
  \end{picture}
\end{minipage}
  \smallskip

\noindent {\it Instruction:}
$s$ is neutral and $\ell$ follows $m$ until $0<\rho(m)=\rho(\ell)<\rho(s)$.
 This is possible because $\rho(s)\neq 0$.
Exchange the labels $s \leftrightarrow \ell$.\\

\noindent
\begin{minipage}{\textwidth}
{\bf Case 4:}

\unitlength=.5mm
\begin{picture}(50,60) 
\put(50,0){
  \qbezier(25.000,10.000)(33.284,10.000)(39.142,15.858)
  \qbezier(39.142,15.858)(45.000,21.716)(45.000,30.000)
  \qbezier(45.000,30.000)(45.000,38.284)(39.142,44.142)
  \qbezier(39.142,44.142)(33.284,50.000)(25.000,50.000)
  \qbezier(25.000,50.000)(16.716,50.000)(10.858,44.142)
  \qbezier(10.858,44.142)( 5.000,38.284)(5.000,30.000)
  \qbezier( 5.000,30.000)( 5.000,21.716)(10.858,15.858)
  \qbezier(10.858,15.858)(16.716,10.000)(25.000,10.000)

  \put(10,16.77){\circle*{2}}   \put(5,12){$s$}
  \put(44,36.245){\circle*{2}}   \put(46,35){$\ell$}
  \put(40,43.23){\circle*{2}}   \put(41,45){$m$}

   \put(25,48){\line(0,1){6}}  \put(27,52){$0$}
  \put(42,25){\line(4,-1){7.5}} \put(45,16){$1-\alpha$}}

  \put(150,27.5){$m>_{\lex}\ell$; $\rho(s)\ge\alpha$; $\neg P_1(s,m,\ell)$}
  \end{picture}
\end{minipage}
  \smallskip

\noindent {\it Instruction:}
$s$ is neutral and $\ell$ follows $m$ for just one rotation by $\alpha$.
Exchange the labels $s \leftrightarrow \ell$.
Because  $\rho(s)\ge\alpha$, we have either $m<_{\lex}s$ or $m=s$.\\

\noindent
\begin{minipage}{\textwidth}
{\bf Case 5:}

\unitlength=.5mm
\begin{picture}(50,60)
\put(50,0){
\qbezier(25.000,10.000)(33.284,10.000)(39.142,15.858)
  \qbezier(39.142,15.858)(45.000,21.716)(45.000,30.000)
  \qbezier(45.000,30.000)(45.000,38.284)(39.142,44.142)
  \qbezier(39.142,44.142)(33.284,50.000)(25.000,50.000)
  \qbezier(25.000,50.000)(16.716,50.000)(10.858,44.142)
  \qbezier(10.858,44.142)( 5.000,38.284)(5.000,30.000)
  \qbezier( 5.000,30.000)( 5.000,21.716)(10.858,15.858)
  \qbezier(10.858,15.858)(16.716,10.000)(25.000,10.000)

  \put(25,10){\circle*{2}}   \put(23,3){$m$}
  \put(35,12.68){\circle*{2}}   \put(35,7){$s$}
  \put(45,30){\circle*{2}}   \put(47.5,29.5){$\ell$}

\put(25,48){\line(0,1){6}}  \put(27,52){$0$}
  \put(42,25){\line(4,-1){7.5}} \put(45,16){$1-\alpha$}}

  \put(150,27.5){$m<_{\lex}s$; $(\rho(\ell)-\rho(m))\bmod{1} \le\alpha$; $\neg P_2(s,m,\ell)$}
\end{picture}
\end{minipage}
\smallskip

 \noindent {\it Instruction:}
$\ell$ is neutral and $s$ follows $m$ for just one rotation by $\alpha$.
 Exchange the labels $s \leftrightarrow \ell$.
Because $(\rho(\ell)-\rho(m))\bmod{1} \le\alpha$,  we have either $m>_{\lex}\ell$ or $m=\ell$.\\

\noindent
\begin{minipage}{\textwidth}
{\bf Case 6.1:}

\unitlength=.5mm
\begin{picture}(50,60)  
  \put(50,0){
\qbezier(25.000,10.000)(33.284,10.000)(39.142,15.858)
  \qbezier(39.142,15.858)(45.000,21.716)(45.000,30.000)
  \qbezier(45.000,30.000)(45.000,38.284)(39.142,44.142)
  \qbezier(39.142,44.142)(33.284,50.000)(25.000,50.000)
  \qbezier(25.000,50.000)(16.716,50.000)(10.858,44.142)
  \qbezier(10.858,44.142)( 5.000,38.284)(5.000,30.000)
  \qbezier( 5.000,30.000)( 5.000,21.716)(10.858,15.858)
   \qbezier(10.858,15.858)(16.716,10.000)(25.000,10.000)

\put(44.5,33.44){\circle*{2}}   \put(47,33){$s$}
  \put(42,40.53){\circle*{2}}   \put(44,42){$m$}
  \put(37,46){\circle*{2}}   \put(39,48){$\ell$}

\put(25,48){\line(0,1){6}}  \put(27,52){$0$}
  \put(42,25){\line(4,-1){7.5}} \put(45,16){$1-\alpha$}}

  \put(150,27.5){$s<_{\lex}m\le_{\lex}\ell$; $\neg C(s,m,\ell)$}
\end{picture}
\end{minipage}
\smallskip

\noindent {\it Instruction:}
$\ell$ is neutral and $s$ follows $m$ until they lie in different elements of the circle partition.
No relabeling of $s$ and $\ell$.\\

\noindent
\begin{minipage}{\textwidth}
{\bf Case 6.2:}

\unitlength=.5mm
\begin{picture}(50,60)  
  \put(50,0){
\qbezier(25.000,10.000)(33.284,10.000)(39.142,15.858)
  \qbezier(39.142,15.858)(45.000,21.716)(45.000,30.000)
  \qbezier(45.000,30.000)(45.000,38.284)(39.142,44.142)
  \qbezier(39.142,44.142)(33.284,50.000)(25.000,50.000)
  \qbezier(25.000,50.000)(16.716,50.000)(10.858,44.142)
  \qbezier(10.858,44.142)( 5.000,38.284)(5.000,30.000)
  \qbezier( 5.000,30.000)( 5.000,21.716)(10.858,15.858)
   \qbezier(10.858,15.858)(16.716,10.000)(25.000,10.000)

\put(44.5,33.44){\circle*{2}}   \put(47,33){$s,m$}
  \put(37,46){\circle*{2}}   \put(39,48){$\ell$}

\put(25,48){\line(0,1){6}}  \put(27,52){$0$}
  \put(42,25){\line(4,-1){7.5}} \put(45,16){$1-\alpha$}}

  \put(150,27.5){$s=m\le_{\lex}\ell$; $\neg C(s,m,\ell)$}
\end{picture}
\end{minipage}
\smallskip

\noindent {\it Instruction:}
$\ell$ is neutral and $s$ follows $m$ until $\rho(m)=\rho(s)>\rho(\ell)$.
 This is possible because $\rho(\ell)\neq 0$.
  Exchange the labels $s \leftrightarrow \ell$.\\

Here we verify four of the ten cases in the proof of Theorem~\ref{the:rotation}.
The verifications of all cases are similar to one another.\\

\noindent {\bf Verification of Case 1.1:}
To see that Case 1.1 leads to Case 2.1, let $m'$ and $\ell '$ denote the positions of
$m$ and $\ell$ respectively, the first time  they lie in different elements of the circle partition.
Then clearly $0\leq\rho(s) < \rho(m') \leq 1-\alpha \leq \rho(\ell')$ as required.
It remains to show that after the rotation $\neg C(s, m',\ell')$ holds.
Suppose to the contrary that $C(s, m',\ell')$ holds.
Because $\rho(m')>\rho(s)$, we must have $\rho(m')=\rho(\ell')$ and $\rho(s)=0$.
But this implies $\rho(m)=\rho(\ell)$ and
$\rho(s)=0$, which is impossible since we had assumed $\neg C(s,m,\ell)$.\\

\noindent {\bf Verification of Case 1.2:}
To see that Case 1.2 leads to Case 1.1, let  $m'$ and $\ell '$
denote the positions of $m$ and $\ell$ respectively, the first time $0<\rho(m')=\rho(\ell')<\rho(s)$.
Then clearly after exchanging the labels $s\leftrightarrow \ell$ the points $s,m$,
and $\ell$ are situated as specified in Case 1.1.
It remains to show  that $\neg C(\ell',m',s)$.
Suppose to the contrary that $C(\ell', m',s)$ holds.
Because $m'=\ell'$, we must have $\rho(m')=\rho(\ell')$ and $\rho(s)=0$.
But this implies $\rho(m)=\rho(\ell)$ and $\rho(s)=0$,
which is impossible since we had assumed $\neg C(s,m,\ell)$.\\

\noindent {\bf Verification of Case 2.1:}
Let $s'$ and $m'$ denote the positions of $s$ and $m$ respectively,
the first time  they lie in different elements of the circle partition.
Note that since we have assumed $\alpha<1/2$, it follows that $\rho(s')\ge\alpha$ for otherwise $m$ and $s$
would have already differed earlier.
Three cases are possible: $m'<_{\lex}\ell$, $m'=\ell$ or $m'>_{\lex}\ell$.
We show that this leads to cases 3.1,  3.2 and 4 respectively.
Assume first that $m'\le_{\lex}\ell$.
To show that this leads to Case 3.1 or Case 3.2, we must verify that $\neg C(s',m',\ell)$.
However we have $\alpha\le \rho(s')\le 1-\alpha$, and hence $\rho(s')\neq 0$.
 If $\rho(m')=\rho(s')$, then $\rho(m)=\rho(s)$, and hence $\rho(\ell)\neq 0$
 since we had assumed $\neg C(s,m,\ell)$.
Next we suppose that $m'>\ell$.
To show this result in Case 4, we must show that
 \[
\neg P_1(s',m',\ell).
\]
Assume to the contrary that $P_1(s',m',\ell)$, that is,
 that $\rho(s')=\alpha$, $\rho(m')=0$ and $\rho(\ell)=1-\alpha$.
 This implies $m=0m'$ and $s=0s'$, and hence $\rho(m)=1-\alpha=\rho(\ell)$ and $\rho(s)=0$,
 which is impossible since we had assumed $\neg C(s,m,\ell)$.\\

\noindent {\bf Verification of Case 4}:
Let $\ell'$ and $m'$ denote the positions of $\ell$ and $m$ after rotation by $\alpha$,
Because  $\rho(s)\ge\alpha$, we have either $m'<_{\lex}s$ or $m'=s$.
We will show that this leads to cases 1.1 and 1.2 respectively.
In view of the label exchange $s\leftrightarrow \ell$,
the relative positions of the three points is as required.
It remains to check in both cases that  $\neg C(\ell',m',s)$ holds.
  Since $\rho(s)\ge\alpha$, it follows that $\rho(s)\neq 0$.
  If $\rho(m')=\rho(s)$, then we actually have $\rho(m')=\rho(s)=\alpha$.
  This implies $\rho(m)=0$. Because we had assumed $\neg P_1(s,m,\ell)$,
  we obtain $\rho(\ell)\neq 1-\alpha$, and hence $\rho(\ell')\neq 0$ as required.
\end{proof}

We know by Proposition~\ref{abborders} that the shuffling delay is
necessarily longer than the longest Abelian unbordered prefix of
$x$. We will show that, in the case of self shuffling Sturmian
words, we can actually start the shuffle right after the longest
Abelian unbordered prefix.

\begin{lemma}\label{lem:Ab-lex}
Given a Sturmian  word $x$ of slope $\alpha<1/2$ and beginning in $0$ (resp. in $1$),
and $u$ a non-empty prefix of $x$, the following are equivalent:
\begin{itemize}
\item[(a)] $u$ is the longest Abelian border free prefix of $x$;
\item[(b)]  $u$ is the longest border free prefix of $x$;
\item[(c)]  $|u|$ is the smallest positive integer $n$ such that $T^n(x)<_{\lex}x$ (resp.  $T^n(x)>_{\lex}x$).
\end{itemize}
\end{lemma}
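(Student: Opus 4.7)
The plan is to use the circle-rotation description of Sturmian words throughout. Realize $x$ as the coding of the rotation $R_\alpha\colon\theta\mapsto\theta+\alpha\pmod{1}$ started at the intercept $\rho=\rho(x)\in[0,1-\alpha)$, under the partition $[0,1-\alpha)\mapsto 0$ and $[1-\alpha,1)\mapsto 1$. Since two Sturmian words of the same slope that both begin in $0$ are lexicographically ordered in the same way as their intercepts on $[0,1-\alpha)$, one has $T^m(x)<_{\lex}x \iff R_\alpha^m(\rho)<\rho$ (as real numbers in $[0,1)$). Condition (c) thus reads: $|u|$ is the first return time of $\rho$ to $[0,\rho)$ under $R_\alpha$; write $n^{\ast}=|u|$ for brevity. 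The case where $x$ begins in $1$ is symmetric.

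For $(c)\Rightarrow(b)$, I first show that $u=x_0\cdots x_{n^{\ast}-1}$ is border-free. If $u$ had a border of length $k$, then setting $m=n^{\ast}-k$, the word $T^m(x)$ agrees with $x$ on the first $k$ letters; by minimality $T^m(x)>_{\lex}x$, so there is a first disagreement at some position $j\geq k$. Re-indexing the border equalities $x_{n^{\ast}-k+i}=x_i$ for $0\leq i<j$ shows that $T^{n^{\ast}}(x)$ and $T^k(x)$ agree on the first $j-k$ positions and differ at position $j-k$; the chain $T^{n^{\ast}}(x)<_{\lex}x<_{\lex}T^k(x)$ (the right inequality by minimality applied to $k<n^{\ast}$) then forces $x_j=T^k(x)_{j-k}=1$, whereas the original disagreement forces $T^m(x)_j=1$ and $x_j=0$, a contradiction. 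To show maximality of $u$, I would prove that every prefix of $x$ of length $n>n^{\ast}$ admits a border: for $n\leq n^{\ast}+d$ (with $d$ the first disagreement of $T^{n^{\ast}}(x)$ with $x$) the agreement already supplies a border of length $n-n^{\ast}$; for larger $n$ the three-distance theorem applied to $\rho,R_\alpha\rho,\ldots,R_\alpha^{n-1}\rho$ provides an index $k<n$ for which $R_\alpha^{n-k}(\rho)$ lies in the same length-$k$ cylinder as $\rho$, yielding a standard border of length $k$.

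For $(b)\Rightarrow(a)$, the implication ``Abelian border-free $\Rightarrow$ border-free'' is automatic (every border is an Abelian border), so it suffices to show that the prefix $u$ singled out by $(b)$ is itself Abelian border-free; any strictly longer Abelian border-free prefix is then excluded by the maximality of $u$ as a border-free prefix. Assuming $u$ had an Abelian border of length $k$, i.e.\ a prefix $p$ and suffix $s$ of $u$ with $\Psi(p)=\Psi(s)$ but $p\neq s$ (since $u$ is border-free), in the rotation picture the points $\rho$ and $R_\alpha^{n^{\ast}-k}(\rho)$ lie in distinct length-$k$ cylinders of the same Parikh class. The Sturmian balance property (factors of equal length have Parikh vectors differing by at most $(1,-1)$), the interval structure of the length-$k$ cylinders, and the first-return characterization of $n^{\ast}$ then feed into a short case analysis on the position of $R_\alpha^{n^{\ast}-k}(\rho)$ relative to $\rho$ and to $1-\alpha$ that produces the required contradiction. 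The remaining implication $(a)\Rightarrow(c)$ follows by uniqueness: each of the three conditions characterizes a unique prefix of $x$, and the previous chain identifies these prefixes.

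The main obstacle is the maximality step in $(c)\Rightarrow(b)$: producing an explicit border for every prefix of length strictly greater than $n^{\ast}$ requires a careful reading of the distribution of the orbit of $\rho$ under $R_\alpha$, cleanest via the three-distance theorem or the Ostrowski representation tied to the continued fraction expansion of $\alpha$. By contrast, both the border-freeness of $u$ itself and the Parikh analysis underlying $(b)\Rightarrow(a)$ are shorter and essentially combinatorial.
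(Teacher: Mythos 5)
Your plan diverges from the paper's precisely at the point where the real difficulty sits, and that is where your argument has a genuine gap. The paper never tries to prove directly that every prefix of length greater than $N$ (your $n^{\ast}$) is bordered. Instead it sandwiches: writing $L$ for the length of the longest (Abelian) border-free prefix and $D$ for the shuffling delay, it gets $L\le D$ from Proposition~\ref{abborders}, $D\le N$ by reading it off the explicit shuffling algorithm in the proof of Theorem~\ref{the:rotation} (the shuffle constructed there starts right after the prefix of length $N$), and $N\le L$ from a three-line combinatorial argument that the shortest prefix $v$ with $v^{-1}x<_{\lex}x$ is border-free (if $v=zs=pz$ with $z$ non-trivial, then $z(v^{-1}x)=p^{-1}x>_{\lex}x=zs(v^{-1}x)$ forces $v^{-1}x>_{\lex}s(v^{-1}x)>_{\lex}x$, contradicting minimality of $v$). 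Your border-freeness argument for the length-$n^{\ast}$ prefix reproves essentially this last inequality, more laboriously but plausibly correctly. Your maximality step, however --- ``every prefix of length $n>n^{\ast}$ admits a border'' --- is exactly the hard inequality $L\le N$, and the appeal to the three-distance theorem does not establish it. That theorem controls the gaps between consecutive points of $\{\rho,R_\alpha\rho,\dots,R_\alpha^{n-1}\rho\}$; to produce a border of length $k$ you need $R_\alpha^{n-k}\rho$ and $\rho$ to lie in a common cylinder of length $k$, i.e.\ a quantitative link between the index $n-k$ of the returning point and the length $k$ of the cylinder it must share with $\rho$, and nothing in your sketch supplies that link. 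As written this is a missing proof, not a routine verification, and it is precisely what the detour through the shuffling delay is there to avoid.

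The implication $(b)\Rightarrow(a)$ is also left as an unexecuted ``short case analysis.'' The paper's route is cleaner: border-free factors of a Sturmian word are exactly those of the form $0B1$ or $1B0$ with $B$ a palindrome, and such a word is automatically Abelian border-free, since for each admissible $k$ its length-$k$ prefix has one more $0$ and one fewer $1$ than its length-$k$ suffix (the suffix of length $k-1$ of a palindrome is Abelian-equivalent to its prefix of length $k-1$). With that structural fact, border-free and Abelian border-free coincide for Sturmian factors and $(a)\Leftrightarrow(b)$ is immediate. I would recommend dropping the direct rotation-theoretic attack on maximality and instead importing the two external inputs the paper uses: Proposition~\ref{abborders} for $L\le D$ and the shuffling algorithm of Theorem~\ref{the:rotation} for $D\le N$.
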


\begin{proof}
Unbordered factors of a Sturmian word are of the form $0B1$ or
$1B0$ where $B$ is a palindrome (see, e.g., \cite{HN04}).
Consequently, a Sturmian factor is unbordered if and only if it is
Abelian unbordered. Hence $(a)\Leftrightarrow (b)$.

$(a)\Leftrightarrow (c)$: Let us denote by $D$ the shuffling delay
of $x$, by $L$ the length of the longest (Abelian) unbordered
prefix of $x$, and by $N$ (resp. $M$) the smallest positive
integer $n$ such that $T^n(x)<_{\lex}x$ (resp. $T^n(x)>_{\lex}x$).
We know that $D\ge L$ by Proposition~\ref{abborders}. From the
proof of Theorem~\ref{the:rotation}, we also know that $D\le N$
(resp. $D\le M$) if $x$ begins in $0$ (resp. in $1$). Suppose $u$
is the longest unbordered prefix of $x$: $|u|=L$. We will show
$|u|=N$ whenever $x$ begins in $0$ and $|u|=M$ whenever $x$ begins
in $1$. First, suppose that  $x$ begins with the letter $0$.
 Then $|u|\le D\le N$. We will show $N\le |u|$.
Let $v$ be the shortest prefix of $x$ such that
$v^{-1}x<_{\lex}x$. Hence $|v|=N$. We claim that $v$ is
unbordered. Proceed by contradiction and suppose that $v$ is
bordered: $v=zs=pz$ for some non-trivial word $z$. Then we would
have  $z (v^{-1}x)=p^{-1}x>_{\lex}x=zs (v^{-1}x)$. But this would
imply $v^{-1}x>_{\lex} s (v^{-1}x)>_{\lex} x$, a contradiction
with the definition of $v$. Hence the claim follows and $|u|=N$.
Second, suppose that  $x$ begins with the letter $1$. Let $v$ be
the shortest prefix of $x$ such that $v^{-1}x>_{\lex}x$. Hence
$|v|=M$. Again we can show that $v$ is unbordered, and hence
$|u|=M$.
\end{proof}

\begin{proposition}
Let $x$ be a self-shuffling Sturmian word. Then the shuffling
delay of $x$ equals the length of the longest Abelian unbordered
prefix of $x$.
\end{proposition}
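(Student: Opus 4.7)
The plan is to obtain $D = L$, where $D$ denotes the shuffling delay and $L$ the length of the longest Abelian border-free prefix of $x$, by sandwiching: proving the two inequalities $D \ge L$ and $D \le L$ separately. The lower bound is immediate from Proposition~\ref{abborders}, since every prefix of $x$ strictly longer than $D$ is Abelian bordered, forcing any Abelian border-free prefix to have length at most $D$.

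For $D \le L$, I would appeal to Lemma~\ref{lem:Ab-lex} together with the explicit shuffling algorithm constructed in the proof of Theorem~\ref{the:rotation}. Lemma~\ref{lem:Ab-lex} identifies $L$ with the least positive integer $N$ such that $T^N(x) <_{\lex} x$ (respectively with the least $M$ such that $T^M(x) >_{\lex} x$), depending on whether $x$ begins with $0$ or $1$. So it suffices to exhibit, for the prefix $u$ of $x$ of length $N$ (resp.\ $M$) and the letter $a$ that follows, a shuffle witnessing $(ua)^{-1}x \in \sh(u^{-1}x, a^{-1}x)$.

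The point requiring most care is extracting this witness from the algorithm of Theorem~\ref{the:rotation}. Running that algorithm on the triple $S = M = L = x$ (which is legitimate since self-shuffling forces $\rho(x)\neq 0$), the initial state is Case~1.2 if $x$ begins with $0$ and Case~3.2 if $x$ begins with $1$. In either case the opening instruction keeps $s$ neutral while $\ell$ follows $m$ until the first rotation brings $\rho(m) = \rho(\ell)$ past $\rho(s) = \rho(x)$ on the prescribed side. By the standard geometric description of lexicographic order on a Sturmian orbit, this first crossing coincides with the first $n$ realising $T^n(x) <_{\lex} x$ (respectively $T^n(x) >_{\lex} x$). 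Hence the first handover between the two copies occurs exactly after reading the prefix of length $N$ (resp.\ $M$), yielding $D \le N = L$ (resp.\ $D \le M = L$). Combined with $D \ge L$, this gives $D = L$, as required.
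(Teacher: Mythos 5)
Your proposal is correct and follows essentially the same route as the paper: the lower bound $D\ge L$ from Proposition~\ref{abborders}, and the upper bound by combining Lemma~\ref{lem:Ab-lex} with the fact that the algorithm in the proof of Theorem~\ref{the:rotation}, run on $S=M=L=x$, makes its first handover exactly at the least $n$ with $T^n(x)<_{\lex}x$ (resp.\ $T^n(x)>_{\lex}x$). One small slip: for $x$ beginning with $1$ the initial state is Case~6.2 (all three points in the interval coded $1$, with $s$ following $m$ until $\rho(m)=\rho(s)>\rho(\ell)$), not Case~3.2, but this does not affect the argument.
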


\begin{proof}
Follows from Lemma~\ref{lem:Ab-lex} and the proof of Theorem~\ref{the:rotation}.
\end{proof}

We are able to exhibit explicitly self-shuffles of the words $01C$ 
and $10 C$, where  $C$ is a charateristic Sturmian word.
These shuffles are described by the palindrome construction of Sturmian
words (see for instance \cite{del}).
Let $\rm{Pal}$ be the operator that maps a
finite word $w$ onto its {\em palindromic closure} $\rm{Pal}(w)$,
that is, the shortest palindrome having $w$ as a prefix.
Given an arbitrary binary sequence $(a_1,a_2,a_3,\ldots)$, called the {\em
directive sequence}, we can build a characteristic Sturmian word
by iterating the operator $\rm{\Phi}\colon\mathbb{N}\to \{0,1\}^*$
defined recursively by:
\[
\Phi(0)=\varepsilon \quad \text{ and } \quad \Phi(k)=
{\rm{Pal}}(\Phi(k-1)a_k) \text{ for } k\ge 1.
\]
Moreover, any characteristic Sturmian word may be obtained thanks to this construction.
For example, if the directive sequence is ${\bf d}=(0,0,1,0,1,1,0,1,\ldots)$,
we obtain the following characteristic Sturmian word:
\[
C=\lim_{k\to +\infty}\Phi(k)=
\hat{0}\hat{0}\hat{1}00\hat{0}100\hat{1}000100\hat{1}000100\hat{0}10010001001000100\hat{1}000\cdots
\]
To keep track of the directive sequence, we mark these letters by a ``hat".

Let us split the positive integers according to the fact that $a_k=0$ or $a_k=1$:
For all $i\ge1$, we define $k_0(i)$ (resp.  $k_1(i)$) to be the $i$-th positive integer $k$ such that $a_k=0$ (resp. $a_k=1$).
In the case of the directive sequence $\bf{d}$, we have
\[
(k_0(i))_{i\ge1}=(1,2,4,7,\ldots) \quad \text{ and } \quad (k_1(i))_{i\ge1}=(3,5,6,8,\ldots).
\]
For $k\ge 1$, we define the words $w_k$ by $\Phi(k)=\Phi(k-1)w_k$.

We can now describe the shuffles of $01C$ and $10C$:

\begin{proposition}
Suppose that  the directive sequence begins in $0$. Then
\begin{equation}\label{sh01C}
01C=01\prod_{k\ge1}w_k=01\prod_{i\ge 2}w_{k_0(i)}=0\prod_{i\ge 1}w_{k_1(i)}
\end{equation}
and
\begin{equation}\label{sh10C}
10C=10\prod_{k\ge1}w_k=10^{k_1(1)}\prod_{i\ge 2}w_{k_1(i)}=1\prod_{i\ge 1}w_{k_0(i)}.
\end{equation}
\end{proposition}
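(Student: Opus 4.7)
Telescoping $\Phi(k)=\Phi(k-1)w_k$ gives $C=\prod_{k\ge 1}w_k$, which supplies the first equality in each display. The remaining content of \eqref{sh01C} reduces to the two identities
\[
\text{(I)}\ \prod_{i\ge 2}w_{k_0(i)}=C\quad\text{and}\quad\text{(II)}\ \prod_{i\ge 1}w_{k_1(i)}=1C,
\]
and the non-trivial equalities of \eqref{sh10C} follow from (I), (II) together with $w_1=0$ and $w_{k_1(1)}=1\cdot 0^{k_1(1)-1}$, both immediate from iterating palindromic closure on the initial block $0^{k_1(1)-1}1$ of the directive sequence.

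I would prove (I) and (II) simultaneously by induction on $n$, establishing the finite versions
\[
\prod_{2\le i,\,k_0(i)\le n}w_{k_0(i)}=\Phi(n-1)\ \text{when } a_n=0,\qquad\prod_{1\le i,\,k_1(i)\le n}w_{k_1(i)}=1\cdot\Phi(n-1)\ \text{when } a_n=1,
\]
and then passing to the limit along the infinite subsequences $\{k_0(i)\}$, $\{k_1(i)\}$. The inductive step reduces to the following recursion on the $w_n$'s: if $m$ is the largest index $<n$ with $a_m=a_n$, then
\begin{equation*}
w_n=w_mw_{m+1}\cdots w_{n-1},\qquad\text{equivalently}\qquad \Phi(n)=\Phi(n-1)R,\ \text{where}\ R:=\Phi(m-1)^{-1}\Phi(n-1);
\end{equation*}
while if $a_n$ does not occur in $a_1\cdots a_{n-1}$, then $\Phi(n)=\Phi(n-1)\,a_n\,\Phi(n-1)$, which is the standard ``new-letter'' case of palindromic closure. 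Granting this, the induction step is transparent: in the case $a_n=0$, the partial product through step $n-1$ coincides with that through step $m$, which by the inductive hypothesis equals $\Phi(m-1)$, so appending $w_n$ yields $\Phi(m-1)\cdot w_m\cdots w_{n-1}=\Phi(n-1)$; the $a_n=1$ case is identical, with the base step $n=k_1(1)$ supplied by the new-letter formula.

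To prove the recursion in the non-trivial case, I would show that $\Phi(n-1)R$ is the palindromic closure $\mathrm{Pal}(\Phi(n-1)\,a_n)$ in three steps: (i) $\Phi(n-1)R$ is a palindrome, since palindromy of $\Phi(n-1)$ and the fact that $\Phi(m-1)$ is both a prefix and a suffix of $\Phi(n-1)$ imply $\tilde R=\Phi(n-1)\Phi(m-1)^{-1}$, so $\Phi(n-1)R=\tilde R\,\Phi(n-1)$; (ii) $\Phi(n-1)\,a_n$ is a prefix of $\Phi(n-1)R$, because the first letter of $R$ is the letter of $\Phi(n-1)$ at position $|\Phi(m-1)|+1$, namely the first letter of $w_m$, which is $a_m=a_n$; and (iii) the longest palindromic suffix of $\Phi(n-1)\,a_n$ is exactly $a_n\Phi(m-1)a_n$, of length $|\Phi(m-1)|+2$.

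The principal obstacle is (iii). The existence of this palindromic suffix is immediate from unrolling $\Phi(n-1)=\tilde w_{n-1}\cdots\tilde w_m\Phi(m-1)$, which shows that $\Phi(n-1)$ ends with $a_m\Phi(m-1)$. To see the suffix cannot be extended by one letter, the letter of $\Phi(n-1)$ immediately preceding $a_n\Phi(m-1)a_n$ must differ from $a_n$; by palindromy of $\Phi(n-1)$, this letter equals the second letter of $w_m$ when $|w_m|\ge 2$, or the first letter of $w_{m+1}$—which is $a_{m+1}\ne a_n$ by choice of $m$—when $|w_m|=1$. The first subcase is handled by a subsidiary induction on $k$ proving that whenever $|w_k|\ge 2$ the second letter of $w_k$ equals $1-a_k$; the base case $k=2$ holds because $|w_2|=2$ forces $a_1\ne a_2$ and then $w_2=a_2a_1$ directly. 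Ruling out palindromic suffixes of length strictly exceeding $|\Phi(m-1)|+2$ uses that any such would force $a_n\Phi(m-1)a_n$ to admit a nontrivial period, which in turn forces $\Phi(m-1)$ to be a power of the single letter $a_n$---possible only for $m\in\{1,2\}$, where the residual possibilities are eliminated by direct inspection of $\Phi(2)$ and $\Phi(3)$. This closes the recursion, hence (I), (II), and therefore the proposition.
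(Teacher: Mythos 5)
Your reduction of the proposition to the two product identities $\prod_{i\ge 2}w_{k_0(i)}=C$ and $\prod_{i\ge 1}w_{k_1(i)}=1C$, and the telescoping induction deriving these from the recursion $w_n=w_m\cdots w_{n-1}$ (with $m$ the previous occurrence of the letter $a_n$), are correct and coincide with the paper's argument: that recursion is exactly the formula \eqref{hat-algorithm}, which the paper does not prove but imports from \cite{Risley-Zamboni} (it is Justin's formula for palindromic closure). The difference is that you undertake to prove the recursion from scratch, and that is where your proof has a genuine gap.

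The gap is in step (iii), the maximality of the palindromic suffix $a_n\Phi(m-1)a_n$ of $\Phi(n-1)a_n$. You argue that a longer palindromic suffix $Q'$ would force $P:=a_n\Phi(m-1)a_n$ to admit a nontrivial period, and that a nontrivial period of $P$ would force $\Phi(m-1)$ to be a power of $a_n$. Both implications fail. For the first: $P$ is a border of $Q'$, so $Q'$ has period $|Q'|-|P|$, but $P$ itself inherits no period unless $|Q'|<2|P|$, and nothing in your argument excludes longer $Q'$. For the second: take a directive sequence beginning $0,1,0,1,0$, with $n=5$, $m=3$, so $\Phi(m-1)=\mathrm{Pal}(01)=010$ and $a_n=0$; then $P=00100$ has the nontrivial period $3$ while $010$ is not a power of $0$. (Maximality does hold here --- the longest palindromic suffix of $\Phi(4)\cdot 0=010010100100$ is indeed $00100$ --- but your argument does not establish it: a ``nontrivial period'' of a word of length $\ell$ can be as large as $\ell-1$ and carries almost no information.) The standard repair is the lemma that the palindromic prefixes of $\mathrm{Pal}(u)$ are exactly the words $\mathrm{Pal}(v)$ with $v$ a prefix of $u$: a palindromic suffix $a_nSa_n$ of $\Phi(n-1)a_n$ forces $S$ to be a palindromic prefix of $\Phi(n-1)$ followed by the letter $a_n$, hence $S=\Phi(j)$ with $a_{j+1}=a_n$, and the largest admissible $j$ is $m-1$. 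That lemma itself needs a proof or a citation (to \cite{del}, or to \cite{Risley-Zamboni} as the paper does); with your period argument in its place, the recursion --- and hence the proposition --- is not established.
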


\begin{proof}
Note that we have
\[
w_{k_0(i)}=w_i=0  \quad \text{ for } 1\le i<k_1(1) \quad \text{ and }\quad w_{k_1(1)}=10^{k_1(1)-1}.
\]
This implies $01w_1=01w_{k_0(1)}=010\in\sh(01,0)$ and
\[
10\prod_{k=1}^{k_1(1)}w_k =10^{k_1(1)}w_{k_1(1)}
\in\sh\big(10^{k_1(1)},1\prod_{i=1}^{k_1(1)-1}w_{k_0(i)}\big)
=\sh\big(w_{k_1(1)}0,1\prod_{i=1}^{k_1(1)-1}w_i\big).
\]
Therefore it is sufficient to prove the equalities~\eqref{sh01C} and~\eqref{sh10C}.
In both cases, these equalities follow from the following
observation due to Risley and the fourth author \cite{Risley-Zamboni}:
For $a\in\{0,1\}$ and $i\ge 1$ such that $k_a(i)>k_1(1)$, we have
 \begin{equation}\label{hat-algorithm}
w_{k_a(i)}=\big(\Phi(k_a(i-1)-1) \big)^{-1} \Phi(k_a(i)-1).
\end{equation}
In other words, this means that, if the letter $a_k$ is equal to
$0$ (resp. to $1$),  the $k$-th iteration $\Phi(k)$ is obtained
from $\Phi(k-1)$ by concatenating to $\Phi(k-1)$ its suffix
starting from the last $\hat{0}$ (resp. from the last $\hat{1}$).
\end{proof}

For the words built on the directive sequence ${\bf d}$ we obtain:
\[
01C  =
(01)(\hat{0})(\underbrace{\hat{0}}_{w_2})(\underbrace{\hat{1}00}_{w_3})
(\underbrace{\hat{0}100}_{w_4})
(\underbrace{\hat{1}000100\hat{1}000100}_{w_5w_6})
(\underbrace{\hat{0}10010001001000100}_{w_7}) (\hat{1}000\cdots
\]
and
\[
10C  =(\underbrace{10\hat{0}\hat{0}}_{w_30})
(\underbrace{\hat{1}00\hat{0}100}_{1w_1w_2w_4})
(\underbrace{\hat{1}000100\hat{1}000100}_{w_5w_6})
(\underbrace{\hat{0}10010001001000100}_{w_7}) (\hat{1}000\cdots
\]\\

\begin{remark}It turns out that this shuffle is the same as the
one described by the general algorithm for shuffling Sturmian
words described in the proof of Theorem~\ref{the:rotation}.
We will show this fact in the case of $01C$.
The other case can be handled similarly.
Because we have assumed that the directive sequence starts with $0$,
we know that $0<\alpha<1/2$.
We start in Case 1.2 with $s=m=\ell=01C$:\\

\unitlength=.5mm
\begin{picture}(50,60)
\put(50,0){
  \qbezier(25.000,10.000)(33.284,10.000)(39.142,15.858)
  \qbezier(39.142,15.858)(45.000,21.716)(45.000,30.000)
  \qbezier(45.000,30.000)(45.000,38.284)(39.142,44.142)
  \qbezier(39.142,44.142)(33.284,50.000)(25.000,50.000)
  \qbezier(25.000,50.000)(16.716,50.000)(10.858,44.142)
  \qbezier(10.858,44.142)( 5.000,38.284)(5.000,30.000)
  \qbezier( 5.000,30.000)( 5.000,21.716)(10.858,15.858)
  \qbezier(10.858,15.858)(16.716,10.000)(25.000,10.000)

  \put(44.5,24.5){\circle*{3}}  \put(48,25){$s,m,\ell$}

  \put(25,48){\line(0,1){6}}  \put(27,52){$0$}
  \put(42,25){\line(4,-1){7.5}} \put(45,16){$1-\alpha$}}

  \put(150,27.5){$s\le_{\lex} m =\ell$; $\neg C(s,m,\ell)$}
\end{picture}
\smallskip

 According to our general algorithm, $\ell$ follows $m$ until $0<\rho(m)=\rho(\ell)<\rho(s)$.
In this case, this means $\ell=m=C$.
We exchange the labels $s\leftrightarrow \ell$, hence $s=m=C$ and $\ell=01C$.
The first copy of $01C$ has output $01$.
We are now in Case 1.1 and so, $\ell$ follows $m$ as long as possible.
When it stops, we arrive in Case 2.1.
At this point, depending on the directive sequence,
the second copy has dished out $0\prod_{i=1}^{j}w_{k_1(i)}$ for some $j\ge0$.
For the next step, $s$ follows $m$ as long as possible,
and the first copy dishes out $\prod_{i=2}^{j}w_{k_0(i)}$ for some $j\ge2$.
When it stops, in principle, we arrive in Case 3.1, Case 3.2 or Case 4.
Let us show that, in the case we are concerned with, we necessarily arrive in
Case 3.1. Clearly, we cannot arrive in Case 3.2 because we started
with $s=m=\ell$ and the intercepts of $m$ and $\ell$ (resp. $m$
and $s$) cannot coincide more than once while the algorithm is performed.
The fact that we actually arrive in Case 3.1 follows
from~\eqref{hat-algorithm}: $m$ and $\ell$ coincide until $m$ sees $\hat{0}$
and $\ell$ sees $\hat{1}$, meaning that $m<_{\lex}\ell$.
Using the same kind of arguments, we can show that from Case 3.1, we necessarily arrive in Case 2.1.
Then, following the general algorithm, we simply alternate between Case 2.1 and Case 3.1.
At each step, this corresponds to dishing out either a product of $w_{k_0(i)}$ or a product of $w_{k_1(i)}$.
\end{remark}

We can also exhibit an explicit self-shuffle of the characteristic
Sturmian words. This shuffle is described in
Proposition~\ref{char}. We will need the following auxiliary
lemma. For a self-shuffling word $x=x_0x_1x_2\cdots$ we say that
letters $x_i$ and $x_j$ are congruent modulo its self-shuffle
defined by $N^1$ and $N^2$ (see Definition
\ref{k-self-shuffling}), if $i,j\in N^1$ or $i,j\in N^2$.

\begin{lemma}
Let $x\in \{0,1\}^\mathbb{N}$ be of the form $\prod_{i=1}^\infty
(0^{k_i}1)$ with $k_i\in\mathbb{N}$. Suppose that for each $n\geq 1$
\[
\sum_{i=1}^{n}k_i \leq \mbox{\rm{min}}\{\sum_{i=n+1}^{2n}k_i , \sum_{i=n+2}^{2n+1}k_i \}
\]
and for each $n\geq 2$
\[
 \sum_{i=1}^{n}k_i \geq \mbox{\rm{max}}\{\sum_{i=n+1}^{2n-1}k_i , \sum_{i=n+2}^{2n}k_i \}.
\]
Then $x$ is self-shuffling, and moreover there exists a
self-shuffle of $x$ such that no two consecutive $1$'s in $x$ are congruent modulo this shuffle.
\end{lemma}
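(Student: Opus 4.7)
The plan is to build an explicit shuffle by alternating the $1$'s: send the $(2m-1)$-th $1$ of $x$ to $N^1$ and the $(2m)$-th $1$ to $N^2$. This immediately forces consecutive $1$'s to lie in opposite classes, so the required non-congruence property comes for free.

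Set $S_m := k_1+\cdots+k_m$, let $P_m = S_m + m - 1$ be the position of the $m$-th $1$ of $x$, and for $r\geq 0$ let $R_r$ be the block of zeros between the $r$-th and $(r+1)$-th $1$ (with $R_0$ the initial zero block), so $|R_r|=k_{r+1}$. For each $r$, let $\alpha_r \in \{0,1,\ldots,k_{r+1}\}$ denote the number of zeros of $R_r$ to be assigned to $N^1$. Since the zeros within a single block are interchangeable, any admissible choice of the $\alpha_r$ yields a valid partition $\nats = N^1\sqcup N^2$. Writing $\beta_r := \alpha_0+\cdots+\alpha_r$, the condition $x[N^1]=x$ translates into the requirement that the $m$-th $1$ of $x[N^1]$ (which is the $(2m-1)$-th $1$ of $x$) be preceded in $N^1$ by exactly $S_m$ zeros, yielding $\beta_{2M-2}=S_M$ for every $M \geq 1$. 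Symmetrically, $x[N^2]=x$ gives $\beta_{2M-1}=S_{2M}-S_M$. Subtracting consecutive $\beta$'s produces the closed form
\[
\alpha_0=k_1,\qquad \alpha_{2M-1}=S_{2M}-2S_M,\qquad \alpha_{2M}=S_{M+1}+S_M-S_{2M}\quad (M\geq 1).
\]

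It remains to check that each $\alpha_r$ lies in $[0,k_{r+1}]$. A direct computation shows that the four conditions $\alpha_{2M-1}\geq 0$, $\alpha_{2M}\leq k_{2M+1}$, $\alpha_{2M-1}\leq k_{2M}$, and $\alpha_{2M}\geq 0$ are equivalent, respectively, to
\[
S_M\leq S_{2M}-S_M,\quad S_M\leq S_{2M+1}-S_{M+1},\quad S_M\geq S_{2M-1}-S_M,\quad S_M\geq S_{2M}-S_{M+1},
\]
which are precisely the four inequalities comprising the two displayed hypotheses of the lemma (the latter two only needed for $M\geq 2$, matching the hypothesis). The small cases $\alpha_0=k_1\in[0,k_1]$ and $\alpha_1=k_2-k_1\in[0,k_2]$ follow from the trivial $k_1\geq 0$ and from $k_1\leq k_2$, which is the first hypothesis at $n=1$.

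I do not expect a serious obstacle: once the alternating assignment is fixed, the entire proof reduces to bookkeeping on the partial sums $\beta_r$. The one observation worth flagging is that the four inequalities in the hypothesis are in fact sharp — each corresponds bijectively to one of the four families $\{\alpha_{2M-1}\geq 0\}$, $\{\alpha_{2M-1}\leq k_{2M}\}$, $\{\alpha_{2M}\geq 0\}$, $\{\alpha_{2M}\leq k_{2M+1}\}$ — which explains why the assumption is formulated in precisely this way.
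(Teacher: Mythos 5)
Your proposal is correct and is essentially the paper's own proof in a different parametrization: the paper also alternates the $1$'s between the two copies and splits the $r$-th zero-run, and your $\alpha_{2M-1}=S_{2M}-2S_M$ and $\alpha_{2M}=S_{M+1}+S_M-S_{2M}$ coincide exactly with the paper's $v_M^1$ and $u_{M+1}^1$, with the same four inequalities doing the work. Nothing further is needed.
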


\begin{proof}
For each $n\ge 1$, define
\[
u_n^1=\left\{
\begin{array}{ll}
k_1,& \text{ if }n\le 2\\
\displaystyle\sum_{i=1}^{n-1}k_i-\sum_{i=n+1}^{2n-2}k_i,& \text{ if }n\ge 3
\end{array}
\right.,\quad
v_n^1= \sum_{i=n+1}^{2n}k_i-\sum_{i=1}^{n}k_i
\]
and
\[
u_n^2=\left\{
\begin{array}{ll}
k_1,& \text{ if }n=1\\
\displaystyle\sum_{i=1}^ n k_i-\sum_{i=n+1}^{2n-1}k_i,& \text{ if }n\ge 2
\end{array}
\right.,\quad
v_n^2=\sum_{i=n+2}^{2n+1}k_i-\sum_{i=1}^{n}k_i.
\]
Then, for each $n\ge1$, we have
\[
u_n^1\ge 0,~v_n^1\ge 0,~u_n^2\ge 0,~v_n^2\ge 0
\]
by our assumption. Moreover, it is easy to see that
\[
x=\prod_{n=1}^\infty(0^{u_n^1}10^{v_n^1})
=\prod_{n=1}^\infty(0^{u_n^2}10^{v_n^2})
=\prod_{n=1}^\infty(0^{u_n^1}10^{v_n^1
})(0^{u_n^2}10^{v_n^2}).
\]
Thus, this self-shuffle of $x$ holds in a way that no two
consecutive $1$'s in $x$ are congruent modulo this shuffle.
\end{proof}

\begin{proposition}\label{char}
Let $x=\prod_{i=1}^\infty (0^{k_i}1)$ be a characteristic Sturmian word beginning in $0$.
Then $x$ satisfies each of the inequalities of the previous lemma and hence is self-shuffling.
\end{proposition}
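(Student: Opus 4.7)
The plan is to reduce the four inequalities of the preceding lemma to elementary arithmetic of the ceiling function by expressing the partial sums $S_n=\sum_{i=1}^n k_i$ in terms of the positions of the $1$'s in $x$. Since $x$ is a characteristic Sturmian word beginning in $0$, its slope $\alpha$ must lie in $(0,1/2)$; in particular $1/\alpha>2$, and every gap satisfies $k_i\ge 1$. Using the classical mechanical representation of a characteristic Sturmian word, $c_n=\lfloor(n+1)\alpha\rfloor-\lfloor n\alpha\rfloor$, one computes that the position $p_n$ (1-indexed) of the $n$-th occurrence of $1$ in $x$ is
\[
p_n=\lceil n/\alpha\rceil-1,
\]
so that $S_n=p_n-n=\lceil n/\alpha\rceil-n-1$.

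I would then translate each of the four inequalities into an equivalent statement purely about the integers $p_n$. A short algebraic manipulation gives
\begin{align*}
\text{(a)}\ &2p_n\le p_{2n},& &\text{(b)}\ p_n+p_{n+1}\le p_{2n+1},\\
\text{(c)}\ &p_{2n-1}\le 2p_n-1\ (n\ge 2),& &\text{(d)}\ p_{2n}\le p_n+p_{n+1}-1\ (n\ge 2).
\end{align*}
All four of these can then be verified using two elementary ceiling estimates, valid for any real $x,y$: $\lceil x+y\rceil\le\lceil x\rceil+\lceil y\rceil\le\lceil x+y\rceil+1$ and $\lceil 2x\rceil\le 2\lceil x\rceil\le\lceil 2x\rceil+1$.

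Substituting $p_m=\lceil m/\alpha\rceil-1$, inequality (a) becomes $2\lceil n/\alpha\rceil\le\lceil 2n/\alpha\rceil+1$ and (b) becomes $\lceil n/\alpha\rceil+\lceil(n+1)/\alpha\rceil\le\lceil(2n+1)/\alpha\rceil+1$; both are immediate from the estimates above. For (c), the equivalent form reads $\lceil(2n-1)/\alpha\rceil\le 2\lceil n/\alpha\rceil-2$, which I would prove by combining $\lceil(2n-1)/\alpha\rceil\le\lceil n/\alpha\rceil+\lceil(n-1)/\alpha\rceil$ with the observation that $\lceil n/\alpha\rceil-\lceil(n-1)/\alpha\rceil=p_n-p_{n-1}=k_n+1\ge 2$, itself a consequence of $1/\alpha>2$. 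Inequality (d) is treated symmetrically: combining $\lceil 2n/\alpha\rceil\le 2\lceil n/\alpha\rceil$ with $\lceil(n+1)/\alpha\rceil-\lceil n/\alpha\rceil\ge 2$ yields $\lceil 2n/\alpha\rceil\le\lceil n/\alpha\rceil+\lceil(n+1)/\alpha\rceil-2$.

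The only non-routine point, and the one I expect to require care, is that in (c) and (d) the naive sub/super-additivity of the ceiling falls short by exactly the amount $k_n\ge 1$, so one must genuinely exploit the hypothesis $\alpha<1/2$; the extra slack that lets the arithmetic close is precisely the minimal gap between consecutive $1$'s. Once all four inequalities are established, the previous lemma supplies an explicit self-shuffle of $x$ in which no two consecutive $1$'s are congruent modulo the shuffle, which completes the proof.
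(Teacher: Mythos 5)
Your proposal is correct, but it proves the proposition by a genuinely different route from the paper. The paper never computes the positions of the $1$'s explicitly: it argues combinatorially, observing that since every prefix of a characteristic word is left special, a prefix $X=\prod_{i=1}^n(0^{k_i}1)$ is ``poor in $0$'' (i.e.\ $0X1^{-1}$ is again a factor of the same length with one more $0$), and then derives $|X|\le\min\{|Y|,|Z|\}$ for $Y=\prod_{i=n+1}^{2n}(0^{k_i}1)$ and $Z=\prod_{i=n+2}^{2n+1}(0^{k_i}1)$ by noting that otherwise the factors $0X1^{-1}$, $1Y$, $1Z$ would violate the balance property ($n-1$ versus $n+1$ occurrences of $1$ in comparable lengths); the reverse inequalities are declared analogous. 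Your argument instead passes through the mechanical representation, writes $p_n=\lceil n/\alpha\rceil-1$ and $S_n=p_n-n$, and reduces all four inequalities to sub/super-additivity of the ceiling function, using $\lceil n/\alpha\rceil-\lceil(n-1)/\alpha\rceil=k_n+1\ge 2$ (i.e.\ $\alpha<1/2$) to absorb the unit of slack in the two ``$\ge$'' inequalities. I checked the translation $S_m=\lceil m/\alpha\rceil-m-1$ and all four reduced inequalities (a)--(d); they are correct, and the position formula checks out against the Fibonacci word. What the paper's route buys is independence from the explicit rotation coding -- it uses only balance and left-specialness -- while your route buys uniformity: all four inequalities fall to the same two ceiling estimates, the second pair is actually verified rather than asserted to be analogous, and the precise role of the hypothesis that $x$ begins in $0$ (equivalently $k_i\ge1$) is made transparent. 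The only cosmetic blemish is that your displayed formula for $c_n$ is the intercept-$0$ mechanical word $0C$ rather than $C$ itself, but the derived formula $p_n=\lceil n/\alpha\rceil-1$ for the characteristic word is nonetheless correct, so nothing downstream is affected.
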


\begin{proof}
We shall verify only the first inequality as the second is proved analogously.
Let $x=\prod_{i=1}^\infty (0^{k_i}1)$ be a characteristic Sturmian word.
We begin by observing that if $u$ is a prefix of $x$ ending in $0$, then $u$ is {\it rich} in $0$, i.e.,
there exists a factor $v$ of $x$ with $|u|=|v|$ such that $|u|_0=|v|_0+1$.
In fact we can take $v=1u0^{-1}$.
Similarly if $u$ ends in $1$ then $u$ is {\it poor} in $0$, i.e.,  rich in $1$.
Fix $n\geq1 $ and consider
the prefix $X=\prod_{i=1}^n(0^{k_i}1)$.
Then $X$ is poor in $0$.
Set
\[
Y= \prod _{i=n+1}^{2n} (0^{k_i}1)\,\,\,\,\,\mbox{\rm{and}}\,\,\,\,\,Z=\prod _{i=n+2}^{2n+1} (0^{k_i}1).
\]
We claim that
\[
|X|\leq \mbox{\rm{min}}\{|Y|, |Z|\}
\]
from which it follows that
\[
\sum_{i=1}^{n}k_i
= |X|_0 \leq \mbox{\rm{min}}\{|Y|_0, |Z_0\}
=\mbox{\rm{min}}\{\sum_{i=n+1}^{2n}k_i , \sum_{i=n+2}^{2n+1}k_i \} .
\]
In fact, suppose to the contrary that $|X|> \mbox{\rm{min}}\{|Y|,|Z|\}$;
note that $0X1^{-1}$, $1Y$ and $1Z$ are each factors of $w$ and
$|0X1^{-1}|\geq \mbox{\rm{min}}\{|1Y|, |1Z|\}$.
But $|0X1^{-1}|_1=n-1$ while $|1Y|_1=|1Z|_1=n+1$ contradicting that $x$ is balanced.
\end{proof}

As an almost immediate application of Corollary~\ref{ssSturm} we recover  the following  result
originally proved by Yasutomi in \cite{Ya} and later reproved by Berth\'e, Ei, Ito and Rao in \cite {BEIR}
and independently by Fagnot in \cite{Fa}.
We say an infinite word is {\it pure morphic} if it is a fixed point of some morphism different from the identity.

\begin{theorem}[Yasutomi \cite{Ya}]\label{Yasu}
Let $x\in \{0,1\}^\nats$ be a characteristic Sturmian word.
If $y$ is a pure morphic word in the orbit of $x$, then $y\in \{x, 0x, 1x, 01x,10x\}$.
\end{theorem}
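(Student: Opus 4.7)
The plan is to combine Corollary~\ref{ssSturm} with the morphic invariance of Proposition~\ref{app1}. Write $y = \tau(y)$ for some non-identity morphism $\tau$. I would first invoke Corollary~\ref{ssSturm} to dispose of one case: by that result, $y$ fails to be self-shuffling precisely when $y \in \{0x, 1x\}$, in which case the conclusion is immediate. So I may assume $y$ is self-shuffling, and the remaining task becomes to show $y \in \{x, 01x, 10x\}$.

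Next, I would exploit the morphic equation to constrain the subshift in which $\tau$ operates. Since $y$ is Sturmian it is uniformly recurrent, so the shift orbit closure of $y$ coincides with that of $x$, namely the set of all Sturmian words of slope $\alpha$. Because $\tau$ sends factors of $y$ to factors of $\tau(y) = y$, the morphism $\tau$ maps this orbit closure into itself. The characteristic word $x$ is self-shuffling by Corollary~\ref{ssSturm} (as $\rho(x) = \alpha \neq 0$), so by Proposition~\ref{app1} the word $\tau(x)$ is likewise self-shuffling; in particular $\tau(x)$ lies in the orbit closure of $x$ but is not in $\{0x, 1x\}$. The same invariance will be applied systematically to the other known self-shuffling points of the orbit closure to locate the image of $\tau$.

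The main obstacle is the final step, concluding $y \in \{x, 01x, 10x\}$. My approach would be to view $\tau$ as an endomorphism of the Sturmian subshift of slope $\alpha$ and analyze its induced action on the circle of intercepts parametrizing this subshift: any such endomorphism acts as an affine map on intercepts, and $y = \tau(y)$ forces $\rho(y)$ to be a fixed point of this affine map. Combining this fixed-point condition with the self-shuffling restriction $\rho(y) \neq 0$ (which rules out $0x, 1x$), the only admissible fixed intercepts will turn out to be $\alpha$, yielding $y = x$, and $1-\alpha$, yielding $y \in \{01x, 10x\}$ through the two boundary codings. The technical difficulty lies in carrying out this classification of fixed intercepts rigorously within the elementary framework of the paper, without recourse to the number-theoretic machinery of Yasutomi's original argument.
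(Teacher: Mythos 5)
Your reduction to the case where $y$ is self-shuffling is fine, but the two remaining steps --- that a morphism fixing a point of the subshift ``acts as an affine map on intercepts,'' and that the only admissible fixed intercepts are $\alpha$ and $1-\alpha$ --- are precisely the hard content of the theorem, and neither is carried out. The affine action on intercepts is nowhere available in the paper; establishing it requires the classification of Sturmian morphisms and a computation relating the incidence matrix of $\tau$ to the rotation. Even granting $\rho\mapsto a\rho+b \pmod 1$, the fixed-point equation has $|a-1|$ solutions on the circle, so you would still have to determine which of the points $\{n\alpha\}$, $n\in\mathbb{Z}$ (the intercepts actually occurring in the orbit of $x$) can be fixed. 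That Diophantine analysis is exactly Yasutomi's original argument, which the paper is explicitly trying to bypass; writing ``will turn out to be'' at this juncture leaves the proof with no content beyond the easy exclusion of $\rho=0$, and your preliminary observation that $\tau(x)$ is self-shuffling, while true, is never put to use.

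The paper's route is genuinely different and shows where self-shuffling does the real work. It splits the orbit points other than $x,0x,1x,01x,10x$ into (a) proper tails $y$ of $x$ and (b) left extensions $y=uabx$ with $u$ non-empty and $\{a,b\}=\{0,1\}$; in each case $\tau$ must be primitive with $|\tau(0)|,|\tau(1)|>1$. If $\tau(0)$ and $\tau(1)$ end (case (a)) or begin (case (b)) in distinct letters, the left-special structure of prefixes of $x$ (resp.\ the right-special structure of the reversal $\tilde{x}$) forces $\tau(x)=x$ (resp.\ $\tau(\tilde{x})=\tilde{x}$), which makes $y$ a proper tail of itself --- impossible for an aperiodic word. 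Otherwise $\tau(0)$ and $\tau(1)$ end (resp.\ begin) in the same letter, and Corollary~\ref{primitive} propagates the self-shuffling of $y$ to every left extension (resp.\ every tail) of $y$; but $0x$ and $1x$ occur among these and are not self-shuffling by Corollary~\ref{ssSturm}. So the non-self-shuffling of $0x$ and $1x$ is not merely used to dispose of two candidates: combined with Corollary~\ref{primitive}, it eliminates all the others. Completing your outline as written would in effect require reproving Yasutomi's number-theoretic classification, which defeats the purpose of the exercise.
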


\begin{proof}
We begin with some preliminary observations.
Let $\Omega(x)$ denote the set of all left and right infinite words $y$ such that $\mathcal{F}(x)=\mathcal{F}(y)$
where $\mathcal{F}(x)$ and $\mathcal{F}(y)$ denote the set of all factors of $x$ and $y$ respectively.
If $y\in \Omega(x)$ is a right infinite word, and $0y,1y\in \Omega(x)$, then $y=x$.
This is because every prefix  of $y$ is a left special factor and hence also a prefix of the characteristic word $x$.
Similarly if $y$ is a left infinite word and $y0,y1\in \Omega(x)$, then $y$ is equal to the reversal of $x$.
If $\tau$ is a morphism fixing some point $y\in \Omega(x)$, then $\tau(z)\in \Omega(x)$ for all $z\in \Omega(x)$.

Suppose to the contrary that $\tau\neq id$ is a substitution fixing a proper tail $y$ of $x$.
Then $y$ is self-shuffling by Corollary~\ref{ssSturm}.  Put $x=uy$ with $u\in \{0,1\}^+$.
Using the characterization of Sturmian morphisms (see Theorem 2.3.7 \& Lemma~2.3.13 in \cite{Lo2})
we deduce that $\tau$ must be primitive. Thus we can assume that $|\tau(a)|>1$ for each $a\in \{0,1\}$.
If $\tau(0)$ and $\tau(1)$ end in distinct letters, then as both $0\tau(x), 1\tau(x) \in \Omega(x)$, it follows that $\tau(x)=x$.
Since also $\tau(y)=y$ and $|\tau(u)|>|u|$, it follows that $y$ is a proper tail of itself, a contradiction since $x$ is aperiodic.
Thus $\tau(0)$ and $\tau(1)$ must end in the same letter.
Whence  by Corollary~\ref{primitive} it follows that every left extension of $y$ is self-shuffling,
which is again a contradiction since $0x$ and $1x$ are not self-shuffling.

Next suppose $\tau\neq id$ is a substitution fixing a point $y=uabx\in \Omega(x)$
where $u\in \{0,1\}^+$  and $\{a,b\}=\{0,1\}$.
Again we can suppose $\tau$ is primitive and $|\tau(0)|>1$ and $|\tau(1)|>1$.
If $\tau(0)$ and $\tau(1)$ begin in distinct letters, then $\tau(\tilde{x})0, \tau(\tilde{x})1\in \Omega(x)$
where $\tilde{x}$ denotes the reverse of $x$.
Thus $\tau(\tilde{x})=\tilde{x}$. Thus for each prefix $v$ of $abx$ we have $\tau(\tilde{x}v)=\tilde{x}\tau(v)$
whence $\tau(v)$ is also a prefix of $abx$. Hence $\tau(abx)=abx$.
As before this implies that $abx$ is a proper tail of itself which is a contradiction.
Thus  $\tau(0)$ and $\tau(1)$ begin in the same letter.
Whence  by Corollary~\ref{primitive} it follows that every tail of $y$ is self-shuffling,
which is again a contradiction since  $0x$ and $1x$ are not self-shuffling.
\end{proof}

\begin{remark}
In the case of the Fibonacci infinite word $x$, each of $\{x, 0x, 1x, 01x,10x\}$ is pure morphic.
For a general characteristic word $x$, since every point in the orbit of $x$ except for $0x$ and $1x$ is self-shuffling,
it follows that if $\tau$ is a morphism fixing $x$ (respectively $01x$ or $10x)$,
then $\tau(0)$ and $\tau(1)$ must end (respectively begin) in distinct letters.
\end{remark}

\section{Dynamical embedding and the stepping stone model}

Let $\A$ be a finite set with at least 2 elements.
For $k=2$ and $z\in\A^\nats$, let $G_z^k$ be the directed graph defined in Definition \ref{def_graph}.
We denote this $G_z^k$ by $G_z=(V_z,E_z)$.

We can sometimes embed the graph $G_z$ into a dynamical system nicely in the following sense.

\begin{definition}
Let $X$ be a compact metric space and $R$ be a continuous mapping from $X$ onto itself.
Let $x_0\in X$ and $K$ be a Borel subset of $X^2$.
We say that the quadruple $(X,R,x_0,K)$ is a {\em dynamical embedding} of the graph $G_z$
if $(i,j)\in V_z$ if and only if $(R^i x_0,R^j x_0)\in K$.
\end{definition}

\begin{definition}
Let $(X,R,x_0,K)$ be a dynamical embedding of the graph $G_z$.
The minimum subset $D$ of $X^2$ satisfying
\begin{enumerate}
\item $D\supset X^2\setminus K$, and
\item $(x,y)\in D$ if $(Rx,y)\in D$ and $(x,Ry)\in D$
\end{enumerate}
is called the {\it dead set}.
Let
\[
T=\{(x,y)\in K\colon \mbox{exactly one of }(Rx,y)\mbox{ or } (x,Ry)\mbox{ is in }D\}
\]
and $F=K\setminus(D\cup T)$.
We call $T$ the {\it deterministic set} and $F$ the {\it free set}.
\end{definition}

\begin{definition}
If $0<\alpha<1$ is an irrational number and $0\le\rho<1$ is any
real number, we let $z(\alpha,\rho)=z_0z_1z_2\cdots\in\D^\nats$
denote the Sturmian word defined by
\[
z_n=\lfloor (n+1)\alpha+\rho\rfloor-\lfloor n \alpha+\rho\rfloor.
\]
We also define $z(\alpha,1)$ to be the limit of $z(\alpha,\rho)$ as $\rho\to 1$.
That is, $z(\alpha,1)_0=1$ and $z(\alpha,1)_n=z(\alpha,0)_n$  for any $n\ge 1$.
\end{definition}

Remark that here $\alpha$ is the slope of the Sturmian word $z(\alpha,\rho)$
and if $\rho<1$, then $\rho$ is its intercept.
In Section~\ref{Sturmy}, we noted that the intercept $\rho(x)$ of a Sturmian word $x$ is not one-to-one.
So the intercept of $z(\alpha,1)$ is $0$ and not $1$.
This will not be confusing in any way in the following.

For $x',x''\in \mathbb{R}$, we will use the following notation:
\[
1_{x'\geq x''}=
\begin{cases}
1, & \mbox{if } x'\geq x'', \\
0,  & \mbox{if } x'< x''.
\end{cases}
\]

\begin{theorem}\label{embedding}
Let $z=z(\alpha,\rho)$.
Then the graph $G_z$ has a dynamical embedding $(\mathbb{T},R_\alpha,0,K)$,
where $\mathbb{T}=\mathbb{R}/\mathbb{Z}=[0,1)$,
$R_\alpha$ is the rotation by $\alpha$ on $\mathbb{T}$, and
\[
K=\{(x,y)\in[0,1)^2\colon 1_{x\ge1-\rho}+1_{y\ge1-\rho} =\lfloor x+y+\rho\rfloor\}.
\]
\end{theorem}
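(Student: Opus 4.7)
The plan is to verify the biconditional directly: for each $(i,j)\in\nats^2$, the Parikh equality $\Psi(\pref_i(z))+\Psi(\pref_j(z))=\Psi(\pref_{i+j}(z))$ will be shown to hold if and only if the pair $(R_\alpha^i 0,\, R_\alpha^j 0)$ lies in $K$. Since every prefix of $z$ of length $n$ has Parikh vector $(n-|\pref_n(z)|_1,\,|\pref_n(z)|_1)$, the vector identity collapses to the single scalar equation
\[
|\pref_i(z)|_1 + |\pref_j(z)|_1 = |\pref_{i+j}(z)|_1,
\]
so the entire problem reduces to evaluating this count and comparing it with the definition of $K$.

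For $\rho\in[0,1)$, the defining formula $z_n=\lfloor(n+1)\alpha+\rho\rfloor-\lfloor n\alpha+\rho\rfloor$ telescopes to $|\pref_n(z)|_1=\lfloor n\alpha+\rho\rfloor$ (using $\lfloor\rho\rfloor=0$). Setting $x=\{i\alpha\}=R_\alpha^i 0$ and $y=\{j\alpha\}=R_\alpha^j 0$, I would then apply the elementary splittings
\[
\lfloor m\alpha+\rho\rfloor=\lfloor m\alpha\rfloor+1_{\{m\alpha\}\geq 1-\rho}
\qquad\text{and}\qquad
\lfloor(i+j)\alpha\rfloor=\lfloor i\alpha\rfloor+\lfloor j\alpha\rfloor+\lfloor x+y\rfloor.
\]
After the $\lfloor i\alpha\rfloor$ and $\lfloor j\alpha\rfloor$ terms cancel, the vertex condition becomes
\[
1_{x\geq 1-\rho}+1_{y\geq 1-\rho}=\lfloor x+y\rfloor+1_{\{x+y\}\geq 1-\rho}.
\]
A short case split on whether $x+y<1$ or $x+y\geq 1$ shows that the right-hand side equals $\lfloor x+y+\rho\rfloor$, giving exactly the membership condition for $K$.

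The main obstacle will be the boundary case $\rho=1$. Because $z(\alpha,1)$ is defined as the limit of $z(\alpha,\rho)$ as $\rho\to 1^-$, one has $|\pref_n(z(\alpha,1))|_1=1+\lfloor n\alpha\rfloor$ for $n\geq 1$ but $|\pref_0(z(\alpha,1))|_1=0$, so the clean telescoping identity above fails at $n=0$. For $i,j\geq 1$ the same floor manipulation still applies with $\rho=1$ substituted, and $K$ reduces to $\{(x,y):x+y\geq 1\}$, which matches the Parikh condition. The degenerate cases $i=0$ or $j=0$ will need to be inspected by hand and reconciled with the convention adopted at the boundary for the indicator $1_{\cdot\geq 1-\rho}$. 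Apart from this bookkeeping at the $\rho=1$ endpoint, the proof amounts to a single floor-function identity combined with the explicit description of rotation orbits.
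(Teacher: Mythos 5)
Your proposal is correct and follows essentially the same route as the paper's proof: reduce the Parikh-vector condition to the single count $|\pref_i(z)|_1+|\pref_j(z)|_1=|\pref_{i+j}(z)|_1$, telescope to get $|\pref_n(z)|_1=\lfloor n\alpha+\rho\rfloor$, and split the floor functions to arrive exactly at the defining condition of $K$. The boundary worry you raise at $\rho=1$ is genuine and is in fact glossed over by the paper as well: the identity $|\pref_n(z(\alpha,1))|_1=\lfloor n\alpha+1\rfloor$ fails at $n=0$, and indeed $(0,j)$ always belongs to $V_z$ while $(0,\{j\alpha\})\notin K$ when $\rho=1$, so the stated equivalence holds only for $i,j\ge1$ in that limiting case --- a discrepancy confined to the axes that does not affect the subsequent applications.
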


The set $K$ is illustrated on Figure \ref{Fig_K}.
\begin{figure}[htbp]
\setlength{\unitlength}{0.4mm}
\begin{center}
\begin{picture}(100,100)
\put(0,0){\line(1,0){101}}
\put(0,0){\line(0,1){101}}

\multiput(100,-1)(0,3){34}{$\cdot$}
\multiput(0,99)(3,0){34}{$\cdot$}
\multiput(0,30)(2,-2){17}{$\cdot$}
\multiput(34,32)(3,0){22}{$\cdot$}
\multiput(33,34)(0,3){22}{$\cdot$}
\put(34,0){\line(0,1){34}}
\put(0,34){\line(1,0){34}}

\put(0,2){\line(1,-1){2}}
\put(0,6){\line(1,-1){6}}
\put(0,10){\line(1,-1){10}}
\put(0,14){\line(1,-1){14}}
\put(0,18){\line(1,-1){18}}
\put(0,22){\line(1,-1){22}}
\put(0,26){\line(1,-1){26}}
\put(0,30){\line(1,-1){30}}

\put(36,100){\line(1,-1){64}}
\put(40,100){\line(1,-1){60}}
\put(44,100){\line(1,-1){56}}
\put(48,100){\line(1,-1){52}}
\put(52,100){\line(1,-1){48}}
\put(56,100){\line(1,-1){44}}
\put(60,100){\line(1,-1){40}}
\put(64,100){\line(1,-1){36}}
\put(68,100){\line(1,-1){32}}
\put(72,100){\line(1,-1){28}}
\put(76,100){\line(1,-1){24}}
\put(80,100){\line(1,-1){20}}
\put(84,100){\line(1,-1){16}}
\put(88,100){\line(1,-1){12}}
\put(92,100){\line(1,-1){9}}
\put(96,101){\line(1,-1){6}}
\put(98,101){\line(1,-1){4}}

\put(0,35){
\put(0,2){\line(1,-1){2}}
\put(0,6){\line(1,-1){6}}
\put(0,10){\line(1,-1){10}}
\put(0,14){\line(1,-1){14}}
\put(0,18){\line(1,-1){18}}
\put(0,22){\line(1,-1){22}}
\put(0,26){\line(1,-1){26}}
\put(0,30){\line(1,-1){30}}
}

\put(35,0){
\put(0,6){\line(1,-1){6}}
\put(0,10){\line(1,-1){10}}
\put(0,14){\line(1,-1){14}}
\put(0,18){\line(1,-1){18}}
\put(0,22){\line(1,-1){22}}
\put(0,26){\line(1,-1){26}}
\put(0,30){\line(1,-1){30}}
}

\multiput(0,69)(0,4){9}{\line(1,-1){34}}
\multiput(69,0)(4,0){9}{\line(-1,1){34}}

\put(5,100){\line(1,-1){29}}
\put(9,100){\line(1,-1){25}}
\put(13,100){\line(1,-1){21}}
\put(17,100){\line(1,-1){17}}
\put(21,100){\line(1,-1){13}}
\put(25,100){\line(1,-1){9}}
\put(29,100){\line(1,-1){6}}

\put(66,-66){
\put(5,100){\line(1,-1){29}}
\put(9,100){\line(1,-1){25}}
\put(13,100){\line(1,-1){21}}
\put(17,100){\line(1,-1){17}}
\put(21,100){\line(1,-1){13}}
\put(25,100){\line(1,-1){9}}
\put(29,100){\line(1,-1){6}}
}

\put(25,-10){$1-\rho$}
\put(-25,32){$1-\rho$}
\end{picture}
\end{center}
\caption{The set $K$ of Theorem~\ref{embedding}.}
\label{Fig_K}
\end{figure}
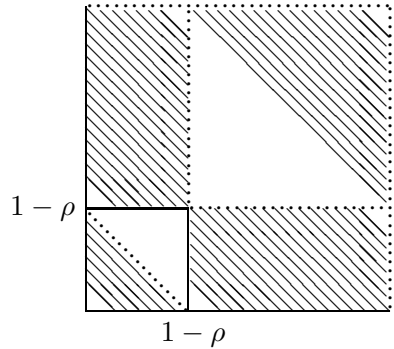

\begin{proof}
Let $G_z=(V_z,E_z)$.
Let $z(\alpha,\rho)=z_0z_1z_2\cdots$ with each $z_i\in\{0,1\}$. If
$0\le \rho<1$ then
\[
|z_0z_1\cdots z_{n-1}|_1= \sum_{i=0}^{n-1}z_i
=\sum_{i=0}^{n-1}(\lfloor (i+1)\alpha+\rho \rfloor-\lfloor i\alpha+\rho\rfloor)
=\lfloor n\alpha+\rho\rfloor.
\]
The same holds for $\rho=1$ since in this case,
\[
|z_0z_1\cdots z_{n-1}|_1= \sum_{i=0}^{n-1}z_i
=1+\sum_{i=1}^{n-1}(\lfloor (i+1)\alpha \rfloor-\lfloor i\alpha\rfloor)
=1+\lfloor n\alpha \rfloor
=\lfloor n\alpha+1\rfloor.
\]
Hence it holds that 
\begin{eqnarray*}
(i,j)\in V_z
&\Leftrightarrow& \lfloor (i+j)\alpha+\rho\rfloor=\lfloor i\alpha+\rho\rfloor+\lfloor j\alpha+\rho\rfloor\\
&\Leftrightarrow& \{(i+j)\alpha+\rho\}=\{i\alpha+\rho\}+\{j\alpha+\rho\}-\rho
\end{eqnarray*}
where $\{x\}$ means $x-\lfloor x\rfloor$.
Since
\[
\{(i+j)\alpha+\rho\}
=\{\{i\alpha\}+\{j\alpha\}+\rho\}
=\{i\alpha\}+\{j\alpha\}+\rho-\lfloor\{i\alpha\}+\{j\alpha\}+\rho\rfloor
\]
and, for each $k\in\{i,j\}$,
\[
\{k\alpha+\rho\}
=\{\{k\alpha\}+\rho\}
=\{k\alpha\}+\rho-1_{\{k\alpha\}\ge 1-\rho},
\]
we obtain that
\begin{eqnarray*}
(i,j)\in V_z
&\Leftrightarrow& 1_{\{i\alpha\}\ge 1-\rho}+1_{\{j\alpha\}\ge 1-\rho} =\lfloor\{i\alpha\}+\{j\alpha\}+\rho\rfloor\\
&\Leftrightarrow& (\{i\alpha\},\{j\alpha\})\in K\\
&\Leftrightarrow& (R_\alpha^i0,R_\alpha^j0)\in K.
\end{eqnarray*}
This implies that the quadruple $(\mathbb{T},R_\alpha,0,K)$ is a dynamical embedding of $G_z$.
\end{proof}

\begin{theorem}\label{stepping stone}
Let ${z}={z}(\alpha,\rho)$.
Then $z$ is self-shuffling if and only if there exists a sequence $(i_n,j_n)\in\nats^2$ such that
\begin{enumerate}
\item $i_n\le i_{n+1}$, $j_n\le j_{n+1}$, $i_n+j_n=n$ for any $n\in\nats$, \label{cond:1}\\
\item $\displaystyle{\lim_{n\to+\infty}}i_n=\displaystyle{\lim_{n\to+\infty}}j_n=\infty$, and \label{cond:2}\\
\item $1_{\{i_n\alpha\}\ge1-\rho}+1_{\{j_n\alpha\}\ge1-\rho}
=\lfloor\{i_n\alpha\}+\{j_n\alpha\}+\rho\rfloor$ for any $n\in\nats$.
\end{enumerate}
\end{theorem}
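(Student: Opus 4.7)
The plan is to recognize this theorem as essentially the concatenation of Theorem~\ref{graph} (applied with $k=2$) and Theorem~\ref{embedding}. First I would invoke Theorem~\ref{graph} to reduce self-shuffling of $z$ to the existence of an infinite path $v^0 v^1 v^2 \ldots$ in the graph $G_z = G_z^2$ starting at $v^0=(0,0)$ and going to infinity in both coordinates. Writing $v^n = (i_n, j_n) \in V_z$, the edge condition $(v^n, v^{n+1}) \in E_z$ unpacks to $i_n \le i_{n+1}$, $j_n \le j_{n+1}$, and $i_n + j_n + 1 = i_{n+1} + j_{n+1}$. Since $i_0 + j_0 = 0$, induction on $n$ immediately gives $i_n + j_n = n$, so condition~(1) of the theorem captures precisely what it means to have such a path starting at $\vec 0$, and condition~(2) is exactly the requirement that the path tends to $\vec\infty$.

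Next I would translate the vertex condition $(i_n, j_n) \in V_z$ into condition~(3). This is where Theorem~\ref{embedding} does all the work: it tells us that $(i,j) \in V_z$ if and only if $(R_\alpha^i 0, R_\alpha^j 0) \in K$, where $R_\alpha^n 0 = \{n\alpha\}$ and $K$ is precisely the set $\{(x,y) \in [0,1)^2 : 1_{x\ge 1-\rho} + 1_{y \ge 1-\rho} = \lfloor x+y+\rho \rfloor\}$. Substituting $x = \{i_n\alpha\}$ and $y = \{j_n \alpha\}$ gives exactly condition~(3) of the theorem statement. (No separate verification is needed for the case $\rho = 1$ beyond what Theorem~\ref{embedding} already handles, since Theorem~\ref{embedding} is stated for $z(\alpha,\rho)$ uniformly.)

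Combining these two equivalences one direction at a time gives the full biconditional: given a self-shuffle, Theorem~\ref{graph} produces the path, from which we read off $(i_n, j_n)$ satisfying (1)--(2), and Theorem~\ref{embedding} yields~(3); conversely a sequence $(i_n, j_n)$ satisfying (1)--(3) is, by Theorem~\ref{embedding}, an infinite path in $G_z$ starting at $\vec 0$ and, by condition~(2), tending to $\vec\infty$, so Theorem~\ref{graph} asserts that $z$ is self-shuffling. There is no real obstacle here; the only point requiring a moment of attention is checking that the sum-condition $\sum_j i_j + 1 = \sum_j i'_j$ in the definition of $E_z$ combined with $v^0 = \vec 0$ forces $i_n + j_n = n$ identically, which is why condition~(1) of the theorem only needs to record the weaker monotonicity together with the sum identity.
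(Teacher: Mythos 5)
Your proposal is correct and follows exactly the paper's own route: the paper's proof of this theorem is literally the one-line remark that it is clear from Theorems~\ref{graph} and~\ref{embedding}, and your write-up simply fills in the routine bookkeeping (the induction giving $i_n+j_n=n$ and the substitution $x=\{i_n\alpha\}$, $y=\{j_n\alpha\}$ into the definition of $K$). Nothing is missing and no different ideas are introduced.
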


\begin{proof}
Clear from Theorems \ref{graph} and \ref{embedding}.
\end{proof}

Put $i\alpha=x,~j\alpha=y$ and consider the condition $(\{x\},\{y\}) \in K$,
where $K$ is defined as in Theorem~\ref{embedding}.
That is, $(x,y)$ is in $K+\mathbb{Z}^2$ (see Figures \ref{SS} and \ref{fig:rho=01}).
\begin{figure}[htbp]
\setlength{\unitlength}{0.4mm}
\begin{center}
\begin{picture}(150,170)
\put(-10,60){\line(1,0){180}}
\put(20,30){\line(0,1){180}}
\multiput(-40,0)(0,60){3}{\multiput(0,0)(60,0){3}
{\put(20,60){\thicklines\line(1,-1){40}}
\put(20,60){\thicklines\line(0,1){19}}
\put(79,20){\thicklines\line(-1,0){19}}
\put(20,64){\line(1,-1){44}}
\put(20,68){\line(1,-1){48}}
\put(20,72){\line(1,-1){52}}
\put(20,76){\line(1,-1){56}}
\put(21,79){\line(1,-1){58}}
\put(24,80){\line(1,-1){56}}
\put(28,80){\line(1,-1){52}}
\put(32,80){\line(1,-1){48}}
\put(36,80){\line(1,-1){44}}
\put(40,80){\line(1,-1){40}}
\put(44,80){\line(1,-1){36}}
\put(48,80){\line(1,-1){32}}
\put(52,80){\line(1,-1){28}}
\put(56,80){\line(1,-1){24}}
\multiput(22,78)(4,0){10}{$\cdot$}
\multiput(79,21)(0,4){10}{$\cdot$}
\multiput(61,76)(2,-2){9}{$\cdot$}
}}
\put(-40,0){
\put(60,60){\circle*{3}}
\put(84,60){\circle*{3}}
\put(108,60){\circle*{3}}
\put(132,60){\circle*{3}}
\put(132,84){\circle*{3}}
\put(132,108){\circle*{3}}
\put(156,108){\circle*{3}}
\put(180,108){\circle*{3}}
\put(180,132){\circle*{3}}
\put(180,156){\circle*{3}}
\put(180,180){\circle*{3}}
\put(61,52){\makebox(5,5){\bf 0}}
\put(85,52){\makebox(5,5){\bf 1}}
\put(109,52){\makebox(5,5){\bf 2}}
\put(133,52){\makebox(5,5){\bf 3}}
\put(133,80){\makebox(5,5){\bf 4}}
\put(133,100){\makebox(5,5){\bf 5}}
\put(157,100){\makebox(5,5){\bf 6}}
\put(181,100){\makebox(5,5){\bf 7}}
\put(181,124){\makebox(5,5){\bf 8}}
\put(181,148){\makebox(5,5){\bf 9}}
\put(181,172){\makebox(5,5){\bf 10}}}
\end{picture}
\end{center}
\vspace{-1cm}
\caption{The stepping stone for $0<\rho<1$ and a stepping stone
path: $(0,0)\to(\alpha,0)\to(2\alpha,0)\to(3\alpha,0)\to
(3\alpha,\alpha)\to(3\alpha,2\alpha)\to(4\alpha,2\alpha)\to(5\alpha,2\alpha)\to(5\alpha,3\alpha)
\to(5\alpha,4\alpha)\to(5\alpha,5\alpha)\to\cdots$}
\label{SS}
\end{figure}

\begin{figure}[htbp]
\setlength{\unitlength}{0.3mm}
\begin{center}
\begin{picture}(150,60)
\put(-100,0){
\put(10,60){\line(1,0){150}}
\put(30,40){\line(0,1){150}}
\multiput(30,60)(0,60){2}
{\multiput(0,0)(60,0){3}{\put(0,0){\thicklines\line(0,1){60}}}}
\multiput(30,60)(0,60){3}{\multiput(0,0)(60,0){2}{
\put(0,0){\thicklines\line(1,0){60}}}}
\multiput(30,60)(0,60){2}{\multiput(0,0)(60,0){2}{
\multiput(0,56)(2,-2){29}{$\cdot$}
\put(4,54){\line(0,-1){54}}
\put(8,50){\line(0,-1){50}}
\put(12,46){\line(0,-1){46}}
\put(16,42){\line(0,-1){42}}
\put(20,38){\line(0,-1){38}}
\put(24,34){\line(0,-1){34}}
\put(28,30){\line(0,-1){30}}
\put(32,26){\line(0,-1){26}}
\put(36,22){\line(0,-1){22}}
\put(40,18){\line(0,-1){18}}
\put(44,14){\line(0,-1){14}}
\put(48,10){\line(0,-1){10}}
\put(52,6){\line(0,-1){6}}
\put(56,2){\line(0,-1){2}}}}}

\put(70,0){
\put(10,60){\line(1,0){150}}
\put(30,40){\line(0,1){150}}
\multiput(30,60)(0,60){2}{\multiput(0,0)(60,0){3}{
\put(0,0){\thicklines\line(0,1){60}}}}
\multiput(30,60)(0,60){3}{\multiput(0,0)(60,0){2}{
\put(0,0){\thicklines\line(1,0){60}}}}
\multiput(30,60)(0,60){2}{\multiput(0,0)(60,0){2}{
\multiput(0,56)(2,-2){29}{$\cdot$}
\put(4,60){\line(0,-1){2}}
\put(8,60){\line(0,-1){6}}
\put(12,60){\line(0,-1){10}}
\put(16,60){\line(0,-1){14}}
\put(20,60){\line(0,-1){18}}
\put(24,60){\line(0,-1){22}}
\put(28,60){\line(0,-1){26}}
\put(32,60){\line(0,-1){30}}
\put(36,60){\line(0,-1){34}}
\put(40,60){\line(0,-1){38}}
\put(44,60){\line(0,-1){42}}
\put(48,60){\line(0,-1){46}}
\put(52,60){\line(0,-1){50}}
\put(56,60){\line(0,-1){54}}}}}
\end{picture}
\end{center}
\vspace{-1cm}
\caption{The stepping stones for $\rho=0$ (left) and $\rho=1$ (right).}
\label{fig:rho=01}
\end{figure}
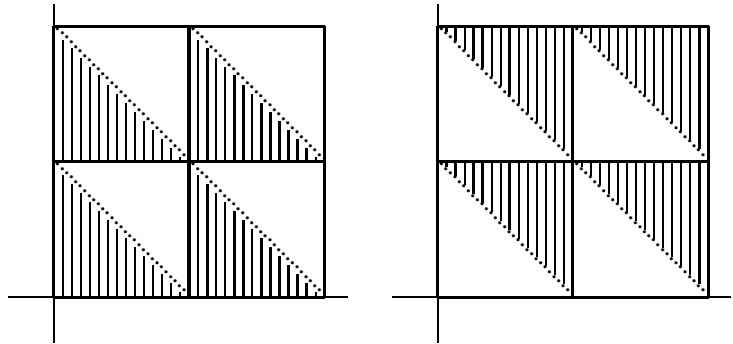
We call $K+\mathbb{Z}^2$ the {\it stepping stone} for $\rho$.
Hence, $z$ is self-shuffling if and only if there is a sequence $(i_n\alpha,j_n\alpha)\in K+\mathbb{Z}^2$
satisfying the conditions \eqref{cond:1} and \eqref{cond:2} of Theorem \ref{stepping stone}.
We call it a {\it stepping stone path} with respect to $(\alpha,\rho)$.
Therefore, by Theorem~\ref{the:rotation}, we have the following corollary.

\begin{corollary}
There exists a stepping stone path with respect to $(\alpha,\rho)$ if $0<\rho<1$.
\end{corollary}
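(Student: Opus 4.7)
The plan is simply to chain together the two results that appear immediately above the corollary, namely Corollary~\ref{ssSturm} and Theorem~\ref{stepping stone}. For $0<\rho<1$ the Sturmian word $z(\alpha,\rho)$ has intercept exactly $\rho$, so in particular $\rho(z(\alpha,\rho))\neq 0$ and $z(\alpha,\rho)$ is not of the form $aC$ with $C$ characteristic. Corollary~\ref{ssSturm} therefore guarantees that $z(\alpha,\rho)$ is self-shuffling.

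Next I would invoke Theorem~\ref{stepping stone}, which is the precise translation of self-shuffling for $z(\alpha,\rho)$ into the existence of a sequence $(i_n,j_n)\in\nats^2$ satisfying the three listed conditions: $i_n\le i_{n+1}$, $j_n\le j_{n+1}$ with $i_n+j_n=n$; $i_n,j_n\to\infty$; and the arithmetic relation
\[
1_{\{i_n\alpha\}\ge 1-\rho}+1_{\{j_n\alpha\}\ge 1-\rho}
=\lfloor\{i_n\alpha\}+\{j_n\alpha\}+\rho\rfloor.
\]
By the definition of the stepping stone $K+\mathbb{Z}^2$ (the third condition says precisely that $(i_n\alpha,j_n\alpha)\in K+\mathbb{Z}^2$), such a sequence is exactly a stepping stone path with respect to $(\alpha,\rho)$.

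There is essentially no obstacle here: the corollary is a repackaging of Corollary~\ref{ssSturm} through the dictionary built in Theorem~\ref{embedding} and Theorem~\ref{stepping stone}. The only point to bear in mind is the hypothesis $0<\rho<1$, which avoids both boundary cases that are excluded from Corollary~\ref{ssSturm} (the intercept-$0$ Sturmian words $0C$ and $1C$, and, by the convention adopted just before Theorem~\ref{embedding}, the limit point $z(\alpha,1)$ whose intercept is also $0$).
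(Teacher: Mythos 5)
Your proposal is correct and is essentially the paper's own argument: the paper derives this corollary by combining Theorem~\ref{the:rotation} (equivalently, its specialization Corollary~\ref{ssSturm}, which gives self-shuffling for intercept $\rho\neq 0$) with the equivalence of Theorem~\ref{stepping stone} between self-shuffling and the existence of a stepping stone path. Your remark about the boundary cases $\rho=0$ and $\rho=1$ matches the paper's conventions as well.
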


Now we give another proof of the fact that Sturmian words of the form $aC$,
where $C$ is a characteristic Sturmian word, are not self-shuffling.

\begin{theorem}
Let $C$ be a characteristic Sturmian word.
Both $0C$ and $1C$ have no stepping stone path, and hence, are not self-shuffling.
\end{theorem}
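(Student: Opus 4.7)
The plan is to handle the two Sturmian words $1C$ and $0C$ separately: the case of $1C$ is immediate from a check at $n=0$, while $0C$ is the substantive case and will be attacked by contradiction using the continued-fraction structure of $\alpha$.

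For $1C=z(\alpha,1)$, condition~(1) of Theorem~\ref{stepping stone} forces $(i_0,j_0)=(0,0)$. Then condition~(3) with $\rho=1$ reads $1_{\{0\}\ge 0}+1_{\{0\}\ge 0}=2$ on the left but $\lfloor 0+0+1\rfloor=1$ on the right, so (3) already fails at $n=0$ and no stepping stone path can exist.

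For $0C=z(\alpha,0)$ I argue by contradiction: assume a stepping stone path $(i_n,j_n)_{n\in\nats}$ exists, so that $\{i_n\alpha\}+\{j_n\alpha\}=\{n\alpha\}$ for every $n$. Let $p_k/q_k$ be the convergents of $\alpha$, $a_k$ the corresponding partial quotients, and set $\delta_k=q_k\alpha-p_k$, so that $\delta_{2m}=\{q_{2m}\alpha\}$ is strictly positive and $\delta_{2m}\searrow 0$. The key step is to inspect the constraint at $n=q_{2m}$ for $m\ge 1$. The classical best-approximation inequality gives that for every $1\le k<q_{2m}$,
\[
\{k\alpha\}\;\ge\;\delta_{2m}+|\delta_{2m-1}|\;>\;\delta_{2m},
\]
the minimum being attained at $k=q_{2m}-q_{2m-1}$ (and traceable back to the recursion $\delta_{2m}=a_{2m}\delta_{2m-1}+\delta_{2m-2}$). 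Consequently any pair $i,j\ge 1$ with $i+j=q_{2m}$ satisfies $\{i\alpha\}+\{j\alpha\}>2\delta_{2m}>\delta_{2m}$. Since $\{i\alpha\}+\{j\alpha\}$ can only take the values $\{q_{2m}\alpha\}=\delta_{2m}$ or $1+\delta_{2m}$, it must equal $1+\delta_{2m}>1$, violating the stepping-stone condition. Hence the only admissible pairs at $n=q_{2m}$ are the axis points $(q_{2m},0)$ and $(0,q_{2m})$.

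The contradiction then follows from condition~(2): pick $N$ with $i_N\ge 1$ and $j_N\ge 1$, and take any $m$ large enough that $q_{2m}\ge N$; monotonicity of $(i_n)$ and $(j_n)$ forces $i_{q_{2m}}\ge 1$ and $j_{q_{2m}}\ge 1$ simultaneously, in direct conflict with the axis conclusion that one of these must vanish. I expect the main obstacle to lie in justifying the best-approximation bound on $\{k\alpha\}$; it is a classical consequence of the Ostrowski numeration system (or the three-distance theorem), but is not developed in the paper, so I would either cite it as a standard fact from continued-fraction theory or verify it inline via a short induction on $k$ using the recursion for $\delta_k$.
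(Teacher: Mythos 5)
Your treatment of $0C$ is correct, and in fact a little slicker than the paper's: where the paper erects an L-shaped barrier $\{q_{2k+1}\}\times\{1,\dots,q_{2k+1}\}\cup\{1,\dots,q_{2k+1}\}\times\{q_{2k+1}\}$ using the \emph{odd}-indexed convergents (so that $\{q_{2k+1}\alpha\}$ is close to $1$ and no interior point on that L can satisfy $\{i\alpha\}+\{j\alpha\}<1$), you place the barrier on the anti-diagonal $i+j=q_{2m}$ using the \emph{even}-indexed convergents. This buys you something: since $i_n+j_n=n$, the path necessarily sits on your barrier at time $n=q_{2m}$, so you avoid the lattice-path crossing argument that the paper's L-shape implicitly requires. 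The best-approximation bound you invoke is standard, and in fact the weaker textbook inequality $\{k\alpha\}\ge\|k\alpha\|\ge\|q_{2m-1}\alpha\|>\|q_{2m}\alpha\|=\delta_{2m}$ for $1\le k<q_{2m}$ already rules out $\{i\alpha\}+\{j\alpha\}=\delta_{2m}$, so you do not even need the sharper value $\delta_{2m}+|\delta_{2m-1}|$ of the minimum.

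The $1C$ half, however, has a genuine gap. Your argument is that condition (3) of Theorem~\ref{stepping stone} is violated at $n=0$ because $1_{0\ge 0}+1_{0\ge 0}=2\neq 1=\lfloor 0+0+1\rfloor$. But $(0,0)$ belongs to $V_z$ for \emph{every} word $z$ by Definition~\ref{def_graph} (the Parikh vector of the empty prefix added to itself is itself), so what you have detected is not an obstruction in the graph $G_{1C}$ but a boundary defect in Theorem~\ref{embedding} for $\rho=1$: the identity $|z_0\cdots z_{n-1}|_1=\lfloor n\alpha+1\rfloor$ used in its proof fails at $n=0$ (and consequently the embedding misclassifies all axis points $(i,0)$ and $(0,j)$, which lie in $V_z$ but not in $K$). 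The implication you need — ``self-shuffling $\Rightarrow$ a sequence satisfying (1)--(3) exists'' — is established in the paper only via that embedding, and for $\rho=1$ its restriction to $n=0$ is exactly as strong as the conclusion you are trying to prove; so invoking it here is circular in effect, and your argument for $1C$ evaporates the moment the boundary case is repaired (as it must be). The fix is to run your anti-diagonal argument again, now at the \emph{odd} convergents: for $i,j\ge 1$ with $i+j=q_{2m+1}$ one has $\{i\alpha\}+\{j\alpha\}\in\{1-|\delta_{2m+1}|,\,2-|\delta_{2m+1}|\}$, and the value $2-|\delta_{2m+1}|$ would force $1-\{i\alpha\}<|\delta_{2m+1}|$ and $1-\{j\alpha\}<|\delta_{2m+1}|$, contradicting $1-\{k\alpha\}\ge\|k\alpha\|\ge\|q_{2m}\alpha\|>|\delta_{2m+1}|$ for $1\le k<q_{2m+1}$; hence the sum is $1-|\delta_{2m+1}|<1$ and the vertex condition $\{i\alpha\}+\{j\alpha\}\ge 1$ fails off the axes, after which your concluding monotonicity argument applies verbatim.
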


\begin{proof}
We have $0C=z(\alpha,0)$ and $1C=z(\alpha,1)$.
Let $p_k/q_k$, with $k\ge1$, be the convergents of $\alpha$.

Assume first that $\rho=0$ and $z=z(\alpha,0)$.
By Theorem~\ref{embedding},
$(i,j)\in V_z$ if and only if $\{i\alpha\}+\{j\alpha\}<1$
since in this case,
$1_{\{i\alpha\}\ge 1-\rho}=1_{\{j\alpha\}\ge 1-\rho}=0$.
Since $\{q_{2k+1}\alpha\}$ is so close to 1,
we do not have $\{i\alpha\}+\{j\alpha\}<1$ for any $(i,j)\in\nats^2$
such that either $i=q_{2k+1}$ and $1\le j\le q_{2k+1}$ or $j=q_{2k+1}$ and $1\le i\le q_{2k+1}$.
Therefore, for each $k\ge 1$, no point in
\[
\{q_{2k+1}\}\times\{1,2,\ldots,q_{2k+1}\}\cup\{1,2,\ldots,q_{2k+1}\}\times\{q_{2k+1}\}
\]
belongs to the vertex set $V_{z}$.
Hence, there is no path in $G_z$ connecting $\vec{0}$ to $\vec{\infty}$.
Thus, $z(\alpha,0)$ is not self-shuffling.

We have a similar proof for $\rho=1$ and $z=z(\alpha,1)$. 
By Theorem~\ref{embedding},
$(i,j)\in V_z$ if and only if $\{i\alpha\}+\{j\alpha\}\ge1$
since in this case, $1_{\{i\alpha\}\ge 1-\rho}=1_{\{j\alpha\}\ge 1-\rho}=1$.
Since $\{q_{2k+2}\alpha\}$ is so close to 0, we have $\{i\alpha\}+\{j\alpha\}< 1$ 
 if either $i=q_{2k+2}$ and $1\le j\le q_{2k+2}$ or $j=q_{2k+2}$ and $1\le i\le q_{2k+2}$.
Hence, there is no path in $G_{z}$ connecting $\vec{0}$ to $\vec{\infty}$.
Thus, $z(\alpha,1)$ is not self-shuffling.
\end{proof}

Let $z=z(\alpha,\rho)$.
Let $(\mathbb{T},R_\alpha,0,K)$ be the dynamical embedding of the graph $G_{z}$ in Theorem~\ref{stepping stone}.
Let $z=z(\alpha,\rho)$.
Let $(\mathbb{T},R_\alpha,0,K)$ be the dynamical embedding of the graph $G_{z}$ in Theorem~\ref{stepping stone}.
We determine the dead set, the deterministic set and the free set in the easy case where
$(1-\rho)/2<\alpha<\min\{\rho,1-\rho\}$.
In fact, let
\begin{eqnarray*}
&D_1=\{(x,y)\in[0,1)^2\colon (R_\alpha x,y)\in \mathbb{T}^2\setminus K \},&\mbox{and}\\
&D_2=\{(x,y)\in[0,1)^2\colon (x,R_\alpha y)\in \mathbb{T}^2\setminus K \}.&
\end{eqnarray*}
Then, $D_1\cap D_2\cap K$ is obtained as in Figure~\ref{fig-D1D2K}.
\begin{figure}[htbp]
\setlength{\unitlength}{0.4mm}
\begin{center}
\begin{picture}(400,80)
\multiput(0,0)(0,100){2}{\thicklines\line(1,0){100}}
\multiput(0,0)(100,0){2}{\thicklines\line(0,1){100}}
\put(0,35){\line(1,-1){35}}
\put(35,100){\line(1,-1){65}}
\put(0,35){\line(1,0){100}}
\put(35,0){\line(0,1){100}}
\put(6,6){$K$}
\put(11,63){$K$}
\put(63,12){$K$}
\put(75,75){$K$}
\put(25,-10){$1-\rho$}

\put(135,0){
\put(0,0){\thicklines\line(0,1){15}}
\put(0,35){\thicklines\line(0,1){65}}
\put(100,0){\thicklines\line(0,1){100}}
\multiput(0,0)(0,100){2}{\thicklines\line(1,0){100}}
\put(-20,0){
\put(0,35){\line(1,-1){35}}
\put(35,100){\line(1,-1){65}}
\put(0,35){\line(1,0){100}}
\put(35,0){\line(0,1){100}}
\put(100,0){\line(0,1){100}}
\put(20,20){$D_1$}
\put(50,50){$D_1$}
\put(16,-10){$1-\rho-\alpha$}
\put(90,-10){$1-\alpha$}}}

\put(285,0){
\put(0,0){\thicklines\line(1,0){15}}
\put(35,0){\thicklines\line(1,0){65}}
\put(0,100){\thicklines\line(1,0){100}}
\multiput(0,0)(100,0){2}{\thicklines\line(0,1){100}}
\put(0,-20){
\put(0,35){\line(1,-1){35}}
\put(35,100){\line(1,-1){65}}
\put(0,35){\line(1,0){100}}
\put(35,0){\line(0,1){100}}
\put(0,100){\line(1,0){100}}
\put(20,20){$D_2$}
\put(50,50){$D_2$}
\put(-44,33){$1-\rho-\alpha$}
\put(-27,97){$1-\alpha$}}}
\end{picture}
\end{center}

\begin{center}
\begin{picture}(250,120)
\multiput(0,0)(0,100){2}{\thicklines\line(1,0){100}}
\multiput(0,0)(100,0){2}{\thicklines\line(0,1){100}}

\put(0,35){\line(1,-1){35}}
\put(35,100){\line(1,-1){65}}
\put(0,35){\line(1,0){100}}
\put(35,0){\line(0,1){100}}

\put(-20,0){
\put(0,35){\line(1,-1){35}}
\put(35,100){\line(1,-1){65}}
\put(0,35){\line(1,0){100}}
\put(35,0){\line(0,1){100}}
\multiput(0,0)(0,100){2}{\line(1,0){100}}
\multiput(0,0)(100,0){2}{\line(0,1){100}}}

\put(0,-20){
\put(0,35){\line(1,-1){35}}
\put(35,100){\line(1,-1){65}}
\put(0,35){\line(1,0){100}}
\put(35,0){\line(0,1){100}}
\multiput(0,0)(0,100){2}{\line(1,0){100}}
\multiput(0,0)(100,0){2}{\line(0,1){100}}}

\multiput(5,10)(2,0){4}{$\cdot$}
\multiput(7,8)(2,0){3}{$\cdot$}
\multiput(9,6)(2,0){2}{$\cdot$}
\multiput(11,4)(2,0){1}{$\cdot$}

\put(150,0){
\multiput(0,0)(0,100){2}{\thicklines\line(1,0){100}}
\multiput(0,0)(100,0){2}{\thicklines\line(0,1){100}}

\put(0,35){\line(1,-1){35}}
\multiput(35,96)(2,-2){33}{$\cdot$}
\put(35,35){\line(1,0){65}}
\put(35,35){\line(0,1){65}}
\multiput(2,32)(3,0){11}{$\cdot$}
\multiput(32,2)(0,3){11}{$\cdot$}

\put(0,15){\line(1,-1){15}}
\multiput(2,12)(3,0){4}{$\cdot$}
\multiput(13,2)(0,3){4}{$\cdot$}

\multiput(5,10)(2,0){4}{$\cdot$}
\multiput(7,8)(2,0){3}{$\cdot$}
\multiput(9,6)(2,0){2}{$\cdot$}
\multiput(11,4)(2,0){1}{$\cdot$}

\multiput(7,28)(2,0){11}{$\cdot$}
\multiput(10,25)(2,0){10}{$\cdot$}
\multiput(13,22)(2,0){8}{$\cdot$}
\multiput(16,19)(2,0){7}{$\cdot$}
\multiput(19,16)(2,0){6}{$\cdot$}
\multiput(22,13)(2,0){4}{$\cdot$}
\multiput(25,10)(2,0){2}{$\cdot$}
\multiput(28,7)(2,0){1}{$\cdot$}
\multiput(31,4)(2,0){1}{$\cdot$}

\multiput(36,93)(2,0){1}{$\cdot$}
\multiput(36,90)(2,0){2}{$\cdot$}
\multiput(36,87)(2,0){3}{$\cdot$}
\multiput(36,84)(2,0){5}{$\cdot$}
\multiput(36,81)(2,0){7}{$\cdot$}
\multiput(36,78)(2,0){8}{$\cdot$}
\multiput(36,75)(2,0){10}{$\cdot$}
\multiput(36,72)(2,0){11}{$\cdot$}
\multiput(36,69)(2,0){12}{$\cdot$}
\multiput(36,66)(2,0){14}{$\cdot$}
\multiput(36,63)(2,0){15}{$\cdot$}
\multiput(36,60)(2,0){17}{$\cdot$}
\multiput(36,57)(2,0){18}{$\cdot$}
\multiput(36,54)(2,0){20}{$\cdot$}
\multiput(36,51)(2,0){22}{$\cdot$}
\multiput(36,48)(2,0){23}{$\cdot$}
\multiput(36,45)(2,0){25}{$\cdot$}
\multiput(36,42)(2,0){26}{$\cdot$}
\multiput(36,39)(2,0){28}{$\cdot$}
\multiput(36,36)(2,0){29}{$\cdot$}
}
\end{picture}
\end{center}
\vspace{.5cm}
\caption{The sets $K,~D_1,~D_2$ (above) and
their intersection (below left),
$D$ (below right).}
\label{fig-D1D2K}
\end{figure}
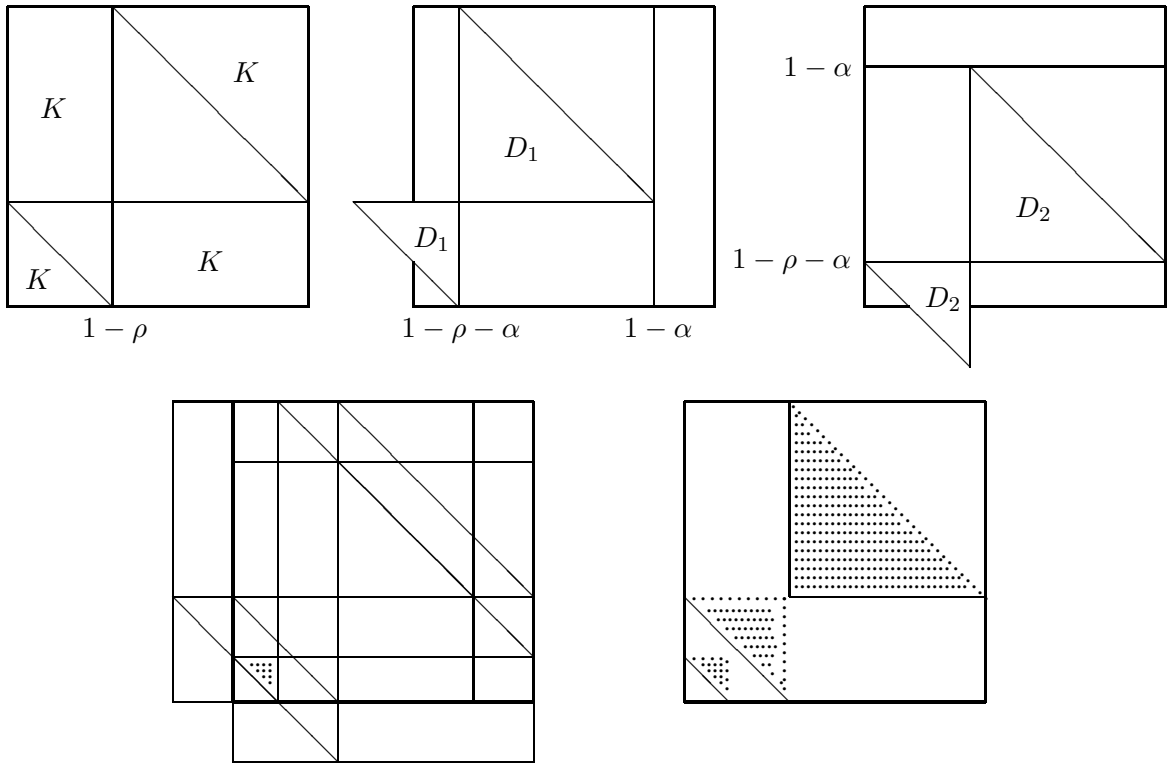

Therefore,
\[
D_1\cap D_2\cap K=\{(x,y)\in[0,1-\alpha-\rho)^2 \colon x+y\ge 1-\alpha-\rho\}.
\]
It is easily verified that:
\begin{enumerate}
\item If $(R_\alpha x,y)\in D_1\cap D_2\cap K$, then
$(x,R_\alpha y)\notin(D_1\cap D_2\cap K)\cup(\mathbb{T}^2
\setminus K)$.
\item If $(x,R_\alpha y)\in D_1\cap D_2\cap K$, then
$(R_\alpha x,y)\notin(D_1\cap D_2\cap K)
\cup(\mathbb{T}^2\setminus K)$.
\end{enumerate}
Hence, $D=(D_1\cap D_2\cap K)\cup(\mathbb{T}^2\setminus K)$ is the dead set.

Let
$$T_1=\{(x,y)\in\mathbb{T}^2\setminus D:~(x,R_\alpha y
)\in D\}\mbox{ and }T_2=\{(x,y)\in\mathbb{T}^2\setminus D:
~(R_\alpha x,y)\in D\}.$$
Then,
\begin{multline*}
T_1=\{(x,y)\in(1-\rho-\alpha,1-\rho]\times[0,1-\rho-\alpha):~x+y<1-\rho\}\\
\cup\{(x,y)\in[1-\rho,1)\times[1-\rho-\alpha,1-\rho):~x+y<2-\rho-\alpha\}\\
\cup\{(x,y)\in[0,1-\rho)\times[1-\rho,1)\colon x+y\ge 2-2\alpha-\rho\}
\end{multline*}
holds. If $(x,y)\in T_1$, then $(R_\alpha x,y)
\in\mathbb{T}^2\setminus D$ since otherwise, $(x,y)\in D$ by the definition of $D$.
Hence,
$(R_\alpha x,y)\in\mathbb{T}^2\setminus D$ and
$(x,R_\alpha y)\in D$ holds if $(x,y)\in T_1$.
The same things hold for $T_2$ in the symmetrical sense.
 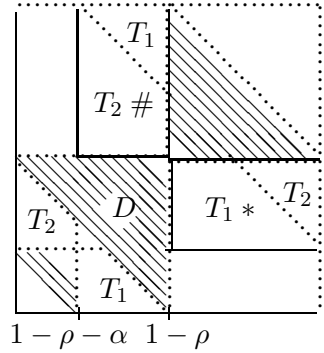
\begin{figure}[htbp]
\setlength{\unitlength}{0.4mm}
\begin{center}
\begin{picture}(100,110)
\put(0,0){\line(1,0){100}}
\put(0,0){\line(0,1){100}}
\multiput(100,0)(0,3){34}{$\cdot$}
\multiput(0,100)(3,0){34}{$\cdot$}
\put(0,52){\line(1,-1){50}}
\put(51,51){\line(0,1){50}}
\put(51,51){\line(1,0){50}}
\multiput(0,50)(3,0){17}{$\cdot$}
\multiput(50,0)(0,3){17}{$\cdot$}
\multiput(50,100)(2,-2){25}{$\cdot$}
\put(0,20){\line(1,-1){20}}
\multiput(0,19)(3,0){10}{$\cdot$}
\multiput(19,0)(0,3){10}{$\cdot$}
\put(4,20){\line(1,-1){16}}
\put(8,20){\line(1,-1){12}}
\put(12,20){\line(1,-1){8}}
\put(15,21){\line(1,-1){6}}
\put(4,52){\line(1,-1){46}}
\put(8,52){\line(1,-1){42}}
\put(12,52){\line(1,-1){16}}
\put(36,28){\line(1,-1){14}}
\put(16,52){\line(1,-1){14}}
\put(38,30){\line(1,-1){12}}
\put(20,52){\line(1,-1){12}}
\put(40,32){\line(1,-1){10}}

\put(24,52){\line(1,-1){26}}
\put(28,52){\line(1,-1){22}}
\put(32,52){\line(1,-1){18}}
\put(36,52){\line(1,-1){14}}
\put(40,52){\line(1,-1){10}}
\put(44,52){\line(1,-1){6}}
\put(51,57){\line(1,-1){6}}
\put(51,61){\line(1,-1){10}}
\put(51,65){\line(1,-1){14}}
\put(51,69){\line(1,-1){18}}
\put(51,73){\line(1,-1){22}}
\put(51,77){\line(1,-1){26}}
\put(51,81){\line(1,-1){30}}
\put(51,85){\line(1,-1){34}}
\put(51,89){\line(1,-1){38}}
\put(51,93){\line(1,-1){42}}
\put(51,97){\line(1,-1){46}}
\put(50,21){\line(1,0){50}}
\put(52,21){\line(0,1){30}}
\multiput(50,48)(3,0){17}{$\cdot$}
\multiput(70,50)(2,-2){15}{$\cdot$}
\put(20.5,52){\line(0,1){49}}
\put(20.5,52){\line(1,0){30}}
\multiput(49,50)(0,3){17}{$\cdot$}
\multiput(50,70)(-2,2){15}{$\cdot$}
\multiput(0,47)(2,-2){10}{$\cdot$}
\multiput(30,17)(2,-2){10}{$\cdot$}
\put(32,32){$D$}
\put(63,32){$T_1$}
\put(75,32){$*$}
\put(37,90){$T_1$}
\put(26,67){$T_2$}
\put(38,67){$\#$}
\put(89,37){$T_2$}
\put(28,5){$T_1$}
\put(4,28){$T_2$}
\put(21,2){\line(0,-1){4}}
\put(-2,-10){$1-\rho-\alpha$}
\put(51,2){\line(0,-1){4}}
\put(43,-10){$1-\rho$}
\end{picture}
\end{center}
\caption{The dead set $D$ together with $T_1\cup T_2$.}
\label{fig:D-T1-T2}
\end{figure}

Let $T=T_1\cup T_2$ and $F=K\setminus(D\cup T)$.
By the definition, it holds that $(R_\alpha x,y)\notin D$
and $(x,R_\alpha y)\notin D$ for any $(x,y)\in F$.
For each $i\in\{1,2\}$, define a mapping $\tilde{R}_{\alpha,i}\colon F\to F$.
For $(x,y)\in\mathbb{T}^2$, we denote
\[
R_{\alpha,i}(x,y)=\left\{\begin{array}{ll}
(R_\alpha x,y),&\mbox{if }i=1\\
(x, R_\alpha y),&\mbox{if }i=2.
\end{array}\right.
\]
Take an arbitrary $(x_0,y_0)\in F$.
Let $(x_1,y_1)=R_{\alpha,i}(x_0,y_0)$.
If $(x_1,y_1)\in F$, then let $\tilde{R}_{\alpha,i}(x_0,y_0)=(x_1,y_1)$.
If $(x_1,y_1)\notin F$, then either $(x_1,y_1)\in T_1$ or $(x_1,y_1)\in T_2$.
Let $(x_2,y_2)=R_{\alpha,1}(x_1,y_1)$ in the former case, and let $(x_2,y_2)=R_{\alpha,2}(x_1,y_1)$ in the latter case.
If $(x_2,y_2)\in F$, then let $\tilde{R}_{\alpha,i}(x_0,y_0)=(x_2,y_2)$.
Repeat this procedure until we get $(x_n,y_n)\in F$.
Then, we define $\tilde{R}_{\alpha,i}(x_0,y_0)=(x_n,y_n)$.
If $(x_n,y_n)\notin F$ for any $n\ge 1$, then
$\tilde{R}_{\alpha,i}(x_0,y_0)$ is not defined, which never happens in our case.
This is easily seen from Figure~\ref{fig:D-T1-T2}.
That is, if $(x,y)\in T_1$, then $R_{\alpha,1}(x,y)\in F$ except for the case when $(x,y)$ is in the $*$-marked region.
If $(x,y)$ is in the $*$-marked region, then take the first $n>0$ such that $R_{\alpha,1}^n(x,y)\notin T_1$.
Then, either $R_{\alpha,1}^n(x,y)\in F$ or $R_{\alpha,1}^n(x,y)\in T_2$.
In the latter case, $R_{\alpha,1}^n(x,y)$ is not in the $\#$-marked region so that $R_{\alpha,2}R_{\alpha,1}^n(x,y)\in F$.
The same for the case $(x,y)\in T_2$.
Therefore, the infinite paths starting $(0,0)$ in $G_z$
correspond bijectively to the infinite sequences of mappings applied to $(0,0)$
\[
    \cdots\tilde{R}_{\alpha,i_3}\tilde{R}_{\alpha,i_2}\tilde{R}_{\alpha,i_1} (0,0)
\]
with $i_1,i_2,\ldots\in\{1,2\}$.
Note that both of $\tilde{R}_{\alpha,1}$ and $\tilde{R}_{\alpha,2}$ are
domain exchange transformations on $F$.

\section{Open questions}




Typically a self-shuffling word can be shuffled in more than one way, i.e., it defines several different steering words.
One may ask:

\begin{question}
Does there exist a self-shuffling word admitting a unique steering word, i.e.,
which can be self-shuffled to produce itself in one and only one way?
\end{question}

We saw that every aperiodic uniformly recurrent word contains an element in its shift orbit closure which is not self-shuffling.

\begin{question}
Does there exist a word $x\in \A^\nats$ for which no element of its shift orbit closure  is self-shuffling?
\end{question}

\end{document}